\newtheorem{theorem}{Theorem}[section]
\newtheorem{corollary}{Corollary}[section]
\newtheorem{lemma}{Lemma}[section]
\newtheorem{example}{Example}[section]
\newtheorem{remark}{Remark}[section]
\newtheorem{definition}{Definition}[section]
\DeclareMathOperator*{\argmin}{argmin}
\DeclareMathOperator{\diag}{diag}
\newcommand{\R}{\mathbb{R}}
\def\s{{\tilde{s}}}
\def\A{{\cal A}}
\def\S{{\cal S}}
\def\Q{{\cal Q}}
\newcommand{\rr}[1]{\textcolor{red}{#1}}
\newcommand{\pp}[1]{\textcolor{purple}{#1}}
\newcommand{\bb}[1]{\textcolor{blue}{#1}}
\newcommand{\ignore}[1]{}
\title{Sufficiently Regularized Nonnegative Quartic  Polynomials are Sum-of-Squares}
\author{\thanks{wenqi.zhu@maths.ox.ac.uk, University of Oxford, UK} Wenqi Zhu,  \thanks{cartis@maths.ox.ac.uk, University of Oxford, UK} Coralia Cartis}
\author{Coralia Cartis\thanks{The order of the authors is alphabetical; the second author (Wenqi Zhu) is the primary contributor.  This work was supported by the Hong Kong Innovation and Technology Commission (InnoHK Project CIMDA).} \textsuperscript{\normalfont ,}\thanks{Mathematical Institute, Woodstock Road, University of Oxford, Oxford, UK, OX2 6GG.  {\tt coralia.cartis@maths.ox.ac.uk} } \quad and \quad
Wenqi Zhu\footnotemark[1] \textsuperscript{\normalfont,}\thanks{Mathematical Institute, Woodstock Road, University of Oxford, Oxford, UK, OX2 6GG.
{\tt wenqi.zhu@maths.ox.ac.uk}}}
\begin{document}

\maketitle

\begin{abstract}
A polynomial that is nonnegative need not be a sum of squares of polynomials. This classical gap, identified by Hilbert in 1888, lies at the heart of why the global optimization of multivariate quartic polynomials is NP-hard. Yet we show that this gap is closed when using (sufficient) regularization, which fundamentally alters the algebraic structure of the problem.
Namely, we investigate a class of quartically-regularized cubic polynomials which arise naturally in polynomial optimization and higher-order tensor methods for nonconvex problems. We show that, under mild assumptions and for sufficiently large Euclidean quartic regularization,
the shifted nonnegative polynomial becomes a sum of squares, yielding an exact semidefinite programming (SDP) formulation at the zeroth level of the Lasserre hierarchy. 
 We further derive explicit bounds on the regularization parameter that guarantee this property. Beyond this asymptotic regime, we identify structured subclasses for which SoS exactness holds for all regularization levels, including quadratic--quartic models and a class of low-rank-type cubic tensors.
In contrast, we show that separable quartic regularized polynomials---including classical tensor models proposed by Schnabel (1991)---do not, in general, induce SoS representations, even under arbitrarily large regularization. Our results reveal a sharp structural boundary between tractable and intractable regimes in polynomial optimization. In particular, they explain why Euclidean quartic regularization plays a significant role: in addition to regularising the model, it can induce exact SoS certificates and exact SDP representations. 
\end{abstract}

\section{Introduction and Overview}
\label{intro}

A foundational result in real algebraic geometry, due to Hilbert (1888), shows that nonnegativity and sum-of-squares (SoS) representations do not coincide in general \cite{hilbert1888darstellung}. Outside a few special cases, there exist nonnegative polynomials that cannot be written as sums of squares. This gap lies at the heart of polynomial optimization and semidefinite programming.

From an optimization perspective, this distinction is fundamental. If a nonnegative polynomial admits an exact SoS certificate, then its global minimum can be computed via a single semidefinite program (SDP) at the zeroth level of the Lasserre hierarchy~\cite{lasserre2001global, parrilo2001minimizing}. By contrast, in the absence of such a certificate, one typically requires a hierarchy of increasingly large SDP relaxations. Although finite convergence can occur under suitable conditions~\cite{nie2014optimality}, the required relaxation order is not known a priori and may be arbitrarily large, and the problem remains NP-hard in general. 
Classical examples illustrate this gap clearly. Nonnegative polynomials such as the Motzkin and Choi--Lam forms are not SoS \cite{motzkin1967arithmetic, choi1977extremal}, and more recently, Ahmadi~\cite[Thm.~3.3]{ahmadi2023sums} constructed an explicit nonnegative trivariate quartic polynomial that is not SoS. Furthermore, Nesterov~\cite{nesterov2003random} proved NP-hardness results for several cubic and quartic polynomial optimization problems over the Euclidean unit ball.

At the same time, many polynomial models arising in optimization are not arbitrary. In particular, structured regularized polynomials appear naturally in higher-order optimization methods and tensor-based models. These methods construct local Taylor approximations of an objective function and coupled with higher-degree regularization terms, leading to polynomial subproblems of increasing order (often referred to as AR$p$ models). 
In this paper, we focus on quartically regularized cubic polynomials of the form
\begin{equation}
\label{m3}
m_3(s)
= f_0 + g^T s + \frac{1}{2} H[s]^2
+ \frac{1}{6}\, T[s]^3
+ \frac{\sigma}{4}\, \|s\|^4,
\tag{Quartically Regularized Cubic Model}
\end{equation}
where $f_0\in\R$, $g\in\R^n$, $H\in\R^{n\times n}$, and $T\in\R^{n\times n\times n}$ are the constant, linear, quadratic, and cubic coefficients, respectively, and $\sigma>0$ is a regularization parameter; the quartic term uses the Euclidean norm $\|s\|^4=(s_1^2+\cdots+s_n^2)^2$. 
When the minimization of $m_3$ appears as an iterative subproblem in optimization algorithms for minimizing a general objective, the parameter $\sigma$ is typically chosen adaptively to control the quality of the local model and its minimizers. When $\sigma$ is sufficiently large, the model minimization is guaranteed to yield a descent step for a general objective, which is a key ingredient for ensuring global convergence of adaptive regularization methods~\cite{cartis2020concise, birgin2017worst, cartis2022evaluation, carmon2020lower}. Not only does
\eqref{m3} appear naturally in polynomial optimization and higher-order methods, but can also be studied independently as a structured class of nonconvex polynomials.



For any $\sigma>0$, the model is bounded below and admits a well-defined global minimum value $m_*$, so that the quartic polynomial $m_3 (s)-m_*$ is nonnegative for $s\in\R^n$. The central question of this paper is the following:

\begin{quote}
\emph{Under what conditions on the regularization weight $\sigma$ and problem data ($g$, $H$ and $T$) does the shifted polynomial $m_3 - m_*$  admit an exact sum-of-squares representation?}
\end{quote}




At first glance, this may appear unlikely given the 
NP-hardness of general quartic polynomial optimization. However, empirical observations and recent theoretical developments suggest that regularization may induce hidden structure. In particular, Parrilo~\cite[Sec.~5.1]{parrilo2001minimizing} observed numerically that certain quartically regularized models appear to become SoS for sufficiently large regularization. More recently, Ahmadi et al.~\cite{ahmadi2023higher} proved that sufficiently regularized \emph{locally strongly convex} quartic polynomials become {\it SoS-convex}, a related notion to SoS. However, in the general case -- namely, when \eqref{m3} is nonconvex, 
whether $m_3-m_*$ admits an exact SoS representation for sufficiently large $\sigma$ remains an open question. Similarly open is the question of whether specific values of $g$, $H$ and $T$ 
yield SoS-representable polynomials for any value of $\sigma>0$. We address both of these questions here.


The key message of our paper is that the Euclidean quartic regularization in the definition of $m_3$ creates a structured, tractable regime, when chosen appropriately. While general nonnegative quartic polynomials need not be SoS, we show that sufficiently large regularization in \eqref{m3} induces sufficient structure in the polynomial so that an exact SoS certificate can be constructed. At the same time, this behavior is not universal across quartic regularizations: certain separable quartic regularization can fail to produce SoS exactness even for arbitrarily large regularization.

The main contributions of the paper are as follows. 

\medskip
\noindent
\textbf{(i) SoS exactness under sufficient regularization.}
We show that, under mild assumptions, choosing sufficiently large Euclidean quartic regularization in \eqref{m3}
guarantees that the shifted polynomial $m_3 - m_*$ is SoS. We derive explicit bounds on the regularization parameter $\sigma$ ensuring this property. To the best of our knowledge, this is the first SoS exactness result for  generally nonconvex quartically regularized cubic polynomials. This shows that appropriate regularization can fundamentally alter SoS certifiability and enable exact semidefinite (SDP) representations.

\medskip
\noindent
\textbf{(ii) Structured polynomial classes with exact SoS certificates.}
Beyond the large-regularization regime, we identify nontrivial subclasses for which SoS exactness holds for all $\sigma > 0$. These include quadratic--quartic models and structured cubic tensors of the form $(t^T s)\|s\|^2$ (for given $t\in \R^n$ and for all $s\in \R^n$). These results extend Hilbert-type equivalences between nonnegativity and SoS to new families of multivariate polynomials.

\medskip
\noindent
\textbf{(iii) A structural boundary: Euclidean vs.\ separable regularization.}
We show that the Euclidean quartic norm plays a critical role in enabling SoS exactness. In contrast, separable quartic regularization—including classical tensor models proposed by Schnabel~\cite{schnabel1991tensor}—may fail to produce SoS representations even for arbitrarily large regularization. This reveals a sharp boundary between tractable and intractable regimes in polynomial optimization. These findings also provide insight into higher-order optimization and tensor methods: Euclidean quartic regularization not only regularizes the subproblem, but also induces a regime where exact SoS certificates and thus exact SDP formulation is attainable.

The remainder of the paper is organized as follows.
\Cref{sec: background} reviews the necessary background on SoS polynomials, SoS-convexity, the Lasserre hierarchy, and related work in polynomial optimization.
\Cref{sec: main theory} contains the main theorem, proving that sufficiently strong Euclidean quartic regularization forces SoS exactness for the quartically regularized cubic model.
\Cref{sec: Special Classes of SoS Quartically Regularized Polynomial} studies several structured subclasses, including cases where SoS exactness holds for all $\sigma>0$, and contrasts them with separable quartic regularization models where SoS exactness may fail.
\Cref{sec examples} presents examples illustrating why the geometry of the regularization term is decisive, and explains how our results differ from classical NP-hardness examples.
Finally, \Cref{sec: conclusion} concludes with a discussion of the implications for higher-order optimization, tensor methods, and future directions.

\subsection{Background: Nonnegativity, Sum of Squares, and SDP}
\label{sec: background}

Let $\mathcal{P}_n^{\deg(h)}$ denote the class of real polynomials $h:\mathbb{R}^n \to \mathbb{R}$ of degree $\deg(h)$. 
We denote by $h^* := \min_{s \in \mathbb{R}^n} h(s)$ the global minimum when it exists. A polynomial $h(s)$ is said to be \emph{nonnegative} if $h(s) \ge 0$ for all $s \in \mathbb{R}^n$.

\begin{definition}[SoS polynomial]
A polynomial $h(s)$ is a sum of squares (SoS) if there exist polynomials $h_1,\dots,h_r$ such that
$
h(s) = \sum_{j=1}^r h_j(s)^2.
$
We say that $h$ is \emph{SoS-exact at level 0} if $h(s) - h^*$ is SoS.
\end{definition}

Hilbert \cite{hilbert1888darstellung} gives that nonnegativity and SoS coincide only in limited cases (e.g., univariate polynomials or quadratic forms), while in general they differ. 
A key advantage of SoS representations is their connection to semidefinite programming (SDP). 
It is well known that a polynomial is a sum of squares if and only if it admits a positive semidefinite Gram matrix representation. This allows global minimization of SoS polynomials via a single SDP.

\begin{theorem} (Parrilo \cite{parrilo2000structured})
    Let $h \in \mathcal{P}_{n}^{2d}$ and
    $
    \phi_d(s) $
    be the vector of all monomials of degree at most $d$. A polynomial $h(s)$ is SoS if and only if there exists a symmetric matrix $\Q$ such that (i) $h(s) =\phi_d(s)^T\Q\phi_d(s)$ for all $s \in \R^n $, (ii) $\Q \succeq 0$. Note that $\Q$ has size $\binom{n+2d}{2d}$.
    \label{thm SoS alternative}
\end{theorem}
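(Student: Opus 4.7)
\medskip

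The proof splits naturally into the two implications, and both directions reduce to the spectral/Cholesky decomposition of a symmetric matrix. The plan is to identify polynomials of degree at most $d$ with vectors of coefficients in the monomial basis $\phi_d(s)$, and then pass between SoS decompositions and rank factorisations of the associated Gram matrix.

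\medskip

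For the forward direction, suppose $h(s)=\sum_{j=1}^r \tilde h_j(s)^2$. The first step is a degree bookkeeping: since $\deg h = 2d$, no $\tilde h_j$ can contain a monomial of degree $>d$, because the leading-degree contributions of the $\tilde h_j^2$ are all nonnegative (being squares of real coefficients) and therefore cannot cancel. Hence every $\tilde h_j$ lies in the span of $\phi_d(s)$, so there exist vectors $c_j \in \R^{N}$ (with $N=\binom{n+d}{d}$) such that $\tilde h_j(s) = c_j^T \phi_d(s)$. Then
\begin{equation*}
h(s) \;=\; \sum_{j=1}^r \bigl(c_j^T \phi_d(s)\bigr)^{\!2} \;=\; \phi_d(s)^T \Bigl(\sum_{j=1}^r c_j c_j^T \Bigr) \phi_d(s),
\end{equation*}
so setting $\Q := \sum_j c_j c_j^T$ produces a symmetric PSD matrix satisfying $h(s)=\phi_d(s)^T \Q\, \phi_d(s)$.

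\medskip

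For the converse, suppose $\Q\succeq 0$ with $h(s)=\phi_d(s)^T \Q\, \phi_d(s)$. The step is to factor $\Q$: by the spectral theorem one may write $\Q = \sum_{j=1}^{r} \lambda_j v_j v_j^T$ with $\lambda_j \ge 0$, or equivalently $\Q = L^T L$ for some $L \in \R^{r \times N}$ (e.g.\ Cholesky, or $L$ formed by rows $\sqrt{\lambda_j}\, v_j^T$). Substituting gives
\begin{equation*}
h(s) \;=\; \phi_d(s)^T L^T L\, \phi_d(s) \;=\; \bigl\| L \phi_d(s) \bigr\|^2 \;=\; \sum_{j=1}^r \bigl(\sqrt{\lambda_j}\, v_j^T \phi_d(s)\bigr)^{\!2},
\end{equation*}
and each summand is the square of a polynomial of degree at most $d$, so $h$ is SoS by \Cref{def SoS polynomial}.

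\medskip

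The only substantive point that requires care is the degree bound in the forward implication; without it the identification $\tilde h_j = c_j^T\phi_d(s)$ is not valid, and a larger monomial vector would be needed. I would therefore isolate this observation as a short lemma (leading-degree terms of a real SoS cannot cancel) before carrying out the Gram-matrix construction. The size statement for $\Q$ then follows directly from counting the entries of $\phi_d(s)$, and the uniqueness of such $\Q$ (up to the linear relations among the monomial products $\phi_d(s)\phi_d(s)^T$) is not needed for the equivalence itself, so I would not pursue it here.
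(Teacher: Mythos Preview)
Your argument is correct and is the standard proof of this result. Note, however, that the paper does not supply its own proof of this theorem: it is stated as a cited result from Parrilo and is followed immediately by an example, so there is nothing to compare against. Incidentally, your size $N=\binom{n+d}{d}$ for $\phi_d$ is the right one (and agrees with the paper's own Example, where $\phi_2(v)\in\R^{\frac{1}{2}(n+2)(n+1)}=\R^{\binom{n+2}{2}}$); the $\binom{n+2d}{2d}$ appearing in the theorem statement is a slip.
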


If $h(s) - h^*$ is SoS, then its global minimum can be computed by solving a single semidefinite program via its Gram matrix representation~\cite{lasserre2000optimisation, parrilo2001minimizing}. 
Moreover, under mild conditions (e.g., uniqueness), the global minimizer can be recovered from the solution of the corresponding moment SDP~\cite{nie2013certifying}. When $h(s) - h^*$ is not SoS, one can instead use the Lasserre hierarchy of SDP relaxations, which converges asymptotically and, under suitable conditions, in finitely many steps~\cite{lasserre2001global, nie2014optimality}. However, the required relaxation order is not known a priori and may be arbitrarily large, so the problem remains NP-hard in general~\cite{scheiderer2006distinguished}. We will also use the notion of SoS-convexity. A polynomial $h$ is called SoS-convex if its Hessian is an SoS matrix. In this case, nonnegativity together with first-order optimality implies an SoS representation~\cite{helton2010semidefinite}.  Recently, Ahmadi et al.~\cite{ahmadi2023higher} showed that sufficiently regularized locally strongly convex quartic polynomials become SoS-convex\footnote{More details in SoS convex are given in \Cref{def sos convex}.}, with an algorithmic framework developed in \cite{zhu2024global}, highlighting the tractability of the convex setting.
Helton and Nie~\cite{helton2010semidefinite} showed that convex semialgebraic sets satisfying suitable curvature or SoS-type conditions admit SDP representations. 
However, when the model is nonconvex, the SoS behavior of quartically regularized polynomials remains largely unexplored. 

\paragraph{Related work on high-order Methods}
There is a substantial body of work on higher-order optimization methods, in both convex and nonconvex settings. The adaptive $p$th order regularization framework (AR$p$) has been extensively studied in \cite{cartis2020concise, birgin2017worst, cartis2022evaluation, carmon2020lower}. In particular, it is known that sufficiently large regularization ensures descent and guarantees optimal evaluation complexity. In the nonconvex setting, AR$p$ methods require at most 
$O\left(\epsilon_g^{-\frac{p+1}{p}}\right)$
evaluations to achieve an $\epsilon_g$ approximate first order critical point and is optimal for this class of problems \cite{birgin2017worst, cartis2020concise, cartis2022evaluation, carmon2020lower}. Improved complexity bounds are available in the convex case \cite{nesterov2021implementable}, and higher-order methods also possess faster local convergence rates \cite{doikov2022local, welzel2025local}. These results highlight the practical and theoretical advantages of using higher-order models \cite{cartis2024efficient}. On the iterative algorithmic side, Nesterov and collaborators developed efficient solvers for convex higher-order models based on Bregman techniques and higher-order proximal methods \cite{nesterov2021implementable, nesterov2020inexact, nesterov2021inexact, nesterov2021superfast, nesterov2022quartic, nesterov2006cubic}. In the nonconvex setting, recent approaches exploit structured quartic regularization, including quadratic--quartic regularization (QQR) \cite{zhu2022quartic}, cubic--quartic regularization (CQR) \cite{zhu2023cubic}, and the diagonally tensor method (DTM) \cite{zhu2025global}. These methods leverage higher-order structure to design tractable subproblem solvers.

From a polynomial optimization perspective, Ahmadi and Zhang~\cite{ahmadi2022complexity} established polynomial-time checkable conditions for local minimality of cubic polynomials. Building on this, Silina et al.~\cite{silina2022unregularized} proposed an unregularized third-order Newton method that computes {\it local} minimizers of $m_3$ with $\sigma=0$, using SDPs, with a follow-up global algorithmic framework in \cite{cai2026globally}.  A broader overview of higher-order methods can be found in~\cite[Ch.~4]{cartis2022evaluation}.

\section{Regularization-Induced Sum-of-Squares Exactness}
\label{sec: main theory}

{
{

While general nonnegative quartic polynomials need not admit sum-of-squares (SoS) representations, we show that sufficiently large quartic regularization fundamentally alters this picture, inducing sufficient change in structure to 
make $m_3-m_*$ SoS representable.

We now explain the methodology underlying our main result. 
We first derive a constructive SoS representation based on global optimality conditions for \eqref{m3}. We then quantify how the size of the minimizer scales with the regularization parameter. Finally, we show that the regularization term eventually dominates all remaining terms, ensuring that the resulting representation of $m_3-m_*$ is indeed SoS.

The two central results of this section are \Cref{thm: certify SoS Quartically Regularized Polynomial} and \Cref{thm B psd}.  
The first gives a constructive SoS certificate conditional on $B(s^*)\succeq 0$, while the second shows that this condition is automatically satisfied once $\sigma$ is sufficiently large. 
We also discuss two important regimes separately. 
In the locally strongly convex case, sufficiently large regularization leads to global convexification and SoS-convexity; see \Cref{sec: Locally convex m3}. 
In the locally nonconvex case, the model remains nonconvex near the origin, but we show that sufficiently large regularization still drives all relevant stationary points into a region where global optimality and SoS exactness can be certified; see \Cref{sec: Locally nonconvex m3 necc suff}. 
These two regimes together clarify how regularization, convexification, SoS certificates, and global optimality interact in quartically regularized cubic models.
}}

\subsection{A Constructive SoS Representation via Global Optimality Conditions}
\label{sec he SoS expression of m3 and its conditions}

{{
We begin by deriving a constructive sufficient condition under which the shifted polynomial
$
q(v):=m_3(s^*+v)-m_3(s^*)
$
is a sum of squares. 
The strategy is to rewrite $q(v)$ so that its quartic and cubic terms can be reorganized into explicit square expressions plus one residual quadratic form in $v$. 
The latter will be absorbed by a symmetric matrix $B(s^*)$. 
If this matrix is positive semidefinite, then the whole polynomial becomes SoS.

The first step is an exact expansion of the shifted model around $s^*$. 
Using the global optimality conditions of $m_3$ in \cite{zhu2025global}, we obtain the following representation. An equivalent integral representation of the shifted model and its connection 
to global optimality conditions are provided in Appendix~\ref{appendix integral representation}. 
}}

\begin{lemma}(Adapted from \cite[Lemma~2.1]{zhu2025global})
Let $q(v) := m_3(s^*+v) - m_3(s^*)$ where $m_3$ is defined in \eqref{m3}.  The first- and second-order derivatives of $m_3$ are
\[
\nabla m_3(s)=g+Hs+\frac12\,T[s]^2+\sigma\|s\|^2 s,
\qquad
\nabla^2 m_3(s)=H+T[s]+\sigma\bigl(2ss^T+\|s\|^2 I_n\bigr).
\]
where $I_n \in \R^{n \times n}$ is the identity matrix.
Then the fourth-order Taylor expansion of $q(v)$ at the point $s^*$ is exact and yields, for any vector $v \in \mathbb{R}^n$,
\begin{align}
q(v)
= \nabla m_3(s^*)^T v
   + \frac{1}{2}\,\nabla^2 m_3(s^*)[v]^2
   + \frac{1}{6}\,T[v]^3
   + \frac{\sigma}{4}\|v\|^4
   + \sigma\, {s^*}^T v\, \|v\|^2.
\label{expression m3 using m_*}
\end{align}
\label{remark q(v) expression}
\end{lemma}

Next, we present a technical lemma that rewrites the homogeneous cubic term $T[v]^3$, as a sum of linear polynomials multiplied by quadratic polynomials.

\begin{definition}
For quartic polynomials ($d=2$), $\phi_2$ is a list of monomial basis with highest degree $2$, 
$$
\phi_2(v) := [1, v_1, \dotsc, v_n, v_1^2, \dotsc, v_n^2, v_1v_2, \dotsc, v_1v_n, v_2v_3, \dotsc, v_2v_n, \dotsc, v_{n-1}v_n]^T \in \R^{ \frac{1}{2}(n+2)(n+1)}.
$$
Let $\omega \in \R^{\frac{1}{2}(n+2)(n+1)}$, we simplify notations $ \omega:= [1,v, u, z]^T $ where $v = [v_1, \dotsc, v_n]^T$, $u = [v_1^2, \dotsc, v_n^2]$ and $z = \{z_{ij}\}_{1 \le i < j \le n}= [v_1v_2, \dotsc, v_1v_n, v_2v_3, \dotsc, v_2v_n, \dotsc, v_{n-1}v_n]$. 
\label{phi2}
\end{definition} 

\begin{lemma}
\label{lemma cubic form}
    Any homogeneous cubic polynomial,  $c_3: \R^n \rightarrow \R$ with $\deg(c_3) = 3$ can be expressed as
    $$
    c_3(v) =  \bigg(\sum_{i=1}^n u_i \bigg) \big( t^Tv\big) + \bigg[\sum_{1 \le i<j \le n} z_{ij}  \big({v^T b^{(ij)}}\big) \bigg], 
    $$ 
    where $u_i$ and $z_{ij}$ are quadratic terms, $t$ and $b^{(ij)}$ are vectors in $\R^n$ based on the coefficients of $  c_3(v) $. 
\end{lemma}

\begin{proof}
We provide the full derivation in Appendix~\ref{appendix proof lemma cubic}. 
\end{proof}

The next theorem formalizes the constructive part of the argument. 
It shows that once the residual matrix $B(s^*)$ is positive semidefinite, the shifted polynomial $q(v)$ can be written explicitly as a sum of squares. 
In other words, the difficulty of proving SoS exactness is reduced to a matrix positivity condition. 

\begin{theorem}
Assume $\sigma>\nu>0$. Let $s^*$ satisfy $\nabla m_3(s^*)=0$ and
\small
\begin{align}
\label{eq:B-sstar}
B(s^*) 
= H + T[s^*]    + \sigma\|s^*\|^2 I_n & {- \nu \big(\,\|s^*\|^2 I_n - s^* s^{*T}\big)}  - \frac{2}{\sigma-\nu}\, t t^T  
      - 2 (s^* t^T + t s^{*T}) \notag \\
&\quad - \sum_{1 \le i < j \le n} 
   \left(
      \tilde{s}^{(ij)} b^{(ij)T}
      + b^{(ij)} \tilde{s}^{(ij)T}
      + \frac{1}{\nu} b^{(ij)} b^{(ij)T}
   \right)
   \succeq 0.
\end{align}
\normalsize
where $\tilde{s}^{(ij)} = [0, \dotsc s^*_j, \dotsc, s^*_i, \dotsc 0] \in \R^n$ has $s^*_j$ on $i$th entry and $s^*_i$ on $j$th entry and all other entries are zero; $ b^{(ij)}$ and $t$ depend on tensor coefficients and are defined in \eqref{Tv expression}. 
Then, \(q(v) := m_3(s^* + v) - m_3(s^*)\)  is SoS in \(v\), 
and consequently, \(s^*\) is a global minimizer of $m_3$. 
\label{thm: certify SoS Quartically Regularized Polynomial}
\end{theorem}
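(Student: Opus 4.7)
The plan is to exhibit an explicit sum-of-squares decomposition of $q(v) := m_3(s^*+v) - m_3(s^*)$ whose residual quadratic form is precisely $\tfrac{1}{2}v^T B(s^*)v$, and then conclude via the hypothesis $B(s^*)\succeq 0$. Starting from \Cref{remark q(v) expression} and using $\nabla m_3(s^*) = 0$, I first write
$q(v) = \tfrac{1}{2}\nabla^2 m_3(s^*)[v]^2 + \tfrac{1}{6}T[v]^3 + \tfrac{\sigma}{4}\|v\|^4 + \sigma(s^*)^T v\,\|v\|^2$,
where $\nabla^2 m_3(s^*) = H + T[s^*] + 2\sigma s^*{s^*}^T + \sigma\|s^*\|^2 I_n$. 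By the preceding cubic-decomposition lemma, $\tfrac{1}{6}T[v]^3 = (\sum_\ell v_\ell^2)(t^T v) + \sum_{i<j}(v_iv_j)(b^{(ij)T}v)$, and an index-by-index expansion of $(s^*)^T v\cdot \|v\|^2$ (splitting the double sum into $k=\ell$ and $k\ne\ell$ parts) yields the companion identity
$\sigma(s^*)^T v\,\|v\|^2 = \sigma\sum_\ell s^*_\ell v_\ell^{\,3} + \sigma\sum_{i<j}(\tilde{s}^{(ij)T}v)(v_iv_j)$.

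The identity I will verify is
\begin{align*}
q(v) ={}& \tfrac{\sigma-\nu}{4}\Bigl(\|v\|^2 + \tfrac{2}{\sigma-\nu}\,t^T v + 2 s^{*T} v\Bigr)^{2}
       + \sum_{\ell=1}^{n}\tfrac{\nu}{4}\bigl(v_\ell^2 + 2 s^*_\ell v_\ell\bigr)^{2} \\
       &+ \sum_{1\le i<j\le n}\tfrac{\nu}{2}\Bigl(v_iv_j + \tilde{s}^{(ij)T}v + \tfrac{1}{\nu}\,b^{(ij)T}v\Bigr)^{2}
       + \tfrac{1}{2}\,v^T B(s^*)\,v.
\end{align*}
The three explicit squares correspond to the quartic split $\tfrac{\sigma}{4}\|v\|^4 = \tfrac{\sigma-\nu}{4}(\sum_\ell v_\ell^2)^2 + \tfrac{\nu}{4}\sum_\ell v_\ell^4 + \tfrac{\nu}{2}\sum_{i<j}(v_iv_j)^2$, and the linear shifts inside each square are uniquely forced by cubic-coefficient matching once this split is fixed. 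A routine calculation then confirms that the cubic parts of the three squares collapse, via the companion identity above, to exactly $\tfrac{1}{6}T[v]^3 + \sigma(s^*)^T v\|v\|^2$.

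The crux is that the leftover quadratic form equals $\tfrac{1}{2}v^T B(s^*) v$ as given in~\eqref{eq:B-sstar}. Expanding the quadratic parts of the three squares produces the matrices
$\tfrac{1}{\sigma-\nu}\bigl[tt^T + (\sigma-\nu)(s^*t^T + ts^{*T}) + (\sigma-\nu)^2 s^*{s^*}^T\bigr]$ from the $S$-square,
$\nu\,\diag((s^*)^2)$ from the $v_\ell^2$-squares, and
$\tfrac{1}{2\nu}\sum_{i<j}\bigl(\nu\tilde{s}^{(ij)}+b^{(ij)}\bigr)\bigl(\nu\tilde{s}^{(ij)}+b^{(ij)}\bigr)^{T}$ from the $v_iv_j$-squares.
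Subtracting twice their sum from $\nabla^2 m_3(s^*)$ and applying the combinatorial identity
$\sum_{i<j}\tilde{s}^{(ij)}\tilde{s}^{(ij)T} = \|s^*\|^2 I_n + s^*{s^*}^T - 2\,\diag((s^*)^2)$
(which follows directly from $\tilde{s}^{(ij)} = s^*_j e_i + s^*_i e_j$) causes the $\diag((s^*)^2)$ contributions to cancel and the remaining pieces to collapse to exactly the right-hand side of~\eqref{eq:B-sstar}. Since $B(s^*)\succeq 0$, $\tfrac{1}{2}v^T B(s^*)v$ is itself a sum of squares of linear forms, so $q(v)$ is SoS; in particular $q(v)\ge 0$ for all $v$, forcing $s^*$ to be a global minimizer of $m_3$.

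The main obstacle I anticipate lies in the cross-term bookkeeping. Two ``natural'' alternative completions---absorbing all of $\sigma(s^*)^T v\|v\|^2$ into a single $S$-shift of the form $\tfrac{2(t + \sigma s^*)}{\sigma-\nu}$, or distributing it entirely among per-index squares via shifts $\tfrac{2t}{\sigma} + \tfrac{2\sigma s^*_\ell}{\nu}e_\ell$---both leave $\sigma$-dependent coefficients on $(s^*t^T + ts^{*T})$ and/or leave spurious $\diag((s^*)^2)$ and $\sum_{i<j}\tilde{s}^{(ij)}\tilde{s}^{(ij)T}$ terms that do not appear in~\eqref{eq:B-sstar}. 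The particular split above, which partially absorbs $\|v\|^2 s^{*T} v$ into the $S$-square with weight $\sigma-\nu$ and distributes the remainder equally between the $v_\ell^2$- and $v_iv_j$-squares with weight $\nu$, is the one that makes all such diagonal and $\tilde s\tilde s^T$ contributions collapse cleanly at the final step.
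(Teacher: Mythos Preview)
Your proposal is correct and takes essentially the same approach as the paper: the three explicit squares you write down are exactly the paper's $\mathcal{SI}_1$, $\mathcal{SI}_3$, $\mathcal{SI}_2$ (in that order, scaled by $\tfrac12$ since the paper works with $2q(v)$), and the residual quadratic is precisely $B(s^*)[v]^2$. The only organizational difference is that the paper collects the leftover quadratic by direct computation of $\nu\sum_{i<j}(v_is^*_j-v_js^*_i)^2=\nu\,v^T(\|s^*\|^2I_n-s^*s^{*T})v$, whereas you route the same cancellation through the identity $\sum_{i<j}\tilde{s}^{(ij)}\tilde{s}^{(ij)T}=\|s^*\|^2I_n+s^*s^{*T}-2\,\diag((s^*)^2)$, which the paper only invokes in the subsequent lemma.
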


\begin{proof}[Proof sketch] 
We provide only the main idea here; the full algebraic derivation is given in Appendix \ref{appendix proof thm 2.1}. 
The proof is constructive. We rewrite the shifted quartic polynomial
$
q(v):=m_3(s^*+v)-m_3(s^*)
$
as a sum of explicit square terms plus one residual quadratic form, and then show that the latter is nonnegative whenever $B(s^*)\succeq 0$. 
Since $s^*$ is a critical point, $\nabla m_3(s^*)=0$, and by \Cref{remark q(v) expression} the fourth-order Taylor expansion is exact:
\[
q(v)
=
\frac{1}{2}\nabla^2 m_3(s^*)[v]^2
+\frac{1}{6}T[v]^3
+\sigma (s^*)^\top v\,\|v\|^2
+\frac{\sigma}{4}\|v\|^4.
\]
The challenge is therefore to control the cubic term $T[v]^3$ and the mixed quartic term $\sigma (s^*)^\top v\,\|v\|^2$ in a way compatible with an SoS decomposition.

To do so, we first rewrite the cubic term in the form given by \Cref{lemma cubic form}, introducing the quadratic monomials
$
u_i=v_i^2,
\qquad
z_{ij}=v_iv_j.
$
This representation expresses the tensor contribution as linear forms multiplied by quadratic monomials, which aligns it with the quartic structure induced by the regularization. 
We then introduce an auxiliary parameter $\nu\in(0,\sigma)$ and complete squares in three groups: terms involving the diagonal monomials $u_i$, terms involving the cross monomials $z_{ij}$, and mixed terms involving $s^*$. 
This yields a decomposition of the form
\[
2q(v)=\mathcal{SI}_1+\mathcal{SI}_2+\mathcal{SI}_3+B(s^*)[v]^2,
\]
where $\mathcal{SI}_1,\mathcal{SI}_2,\mathcal{SI}_3$ are explicit sums of squares, and $B(s^*)$ is the residual symmetric matrix in \eqref{eq:B-sstar}. 
Therefore, if $B(s^*)\succeq 0$, then the residual quadratic form $B(s^*)[v]^2$ is itself a sum of squares, and hence $q(v)$ is SoS. Since $q(v)\ge 0$ for all $v$, this also certifies that $s^*$ is a global minimizer of $m_3$.
\end{proof}

 The conditions in \Cref{thm: certify SoS Quartically Regularized Polynomial} imply that both first- and second-order local optimality conditions of $m_3$ are satisfied at $s^*$; namely, $
\nabla m_3(s^*) = 0
$
and  
$
\nabla^2 m_3(s^*) \succeq 0$, respectively. The proof is provided in Appendix \ref{appendix proof sos local min}.

\begin{remark} 
The SoS certificate condition established in \Cref{thm: certify SoS Quartically Regularized Polynomial} is closely related to global optimality conditions. 
For example, when \(T = 0\), by \cite{cartis2007adaptive, cartis2023second},  a point \(s^*\) is a global minimizer of \(m_3\) \emph{if and only if}
\begin{equation}
\nabla m_3(s^*) = 0,
\qquad 
B(s^*) = H + \sigma \|s^*\|^{2} I_n \succeq 0,
\end{equation}
which coincides with \eqref{eq:B-sstar}.  Yet in the case of $ T\neq 0$, the condition in \eqref{eq:B-sstar} does not, in general, coincide with the global optimality conditions for 
 $m_3$ derived in \cite[Sec.~3]{zhu2025global}.) 
\end{remark}

The next corollary gives a more standard SoS formulation of our result, showing that we are able to identify a $\Q$ matrix as needed in \Cref{thm SoS alternative}. 

\begin{corollary}
\label{Q corollary}
    ({$\Q$ matrix formulation})  
Let $\mathbf{1}^T = [1, \dotsc, 1] \in \mathbb{R}^n$. We construct a symmetric matrix of dimension $\tbinom{n}{2} \times \tbinom{n}{2}$, such that
 \small
\begin{eqnarray}
\label{Q general form}
 \Q =
\begin{bmatrix}
0 & 0 & 0 & 0 \\[3pt]
0 & \nabla^2 m_3(s^*) & D(s^*) & C(s^*) \\[3pt]
0 & D(s^*)^T & U & 0 \\[3pt]
0 & C(s^*)^T & 0 & \nu I_{N}
\end{bmatrix}
\end{eqnarray}
\normalsize
\begin{itemize} \setlength{\itemindent}{0pt}
    \item The $u$-$u$ block has size $n \times n$ and $  U =   \Big(\frac{\sigma}{2} - \frac{\nu}{2}\Big)\mathbf{1}\mathbf{1}^T
+ \frac{\nu}{2} I_n$  which has   $\frac{\sigma}{2}$ on the diagonal and   $\frac{\sigma - \nu}{2}$ off-diagonal.
\item The   $z$-$z$ block has size $N \times N$ is an identity matrix $\nu  I_N$.
\item  The $v$-$u$ block has size $n \times n$, $
  D(s^*)  = \big[( \sigma - \nu)  s^* +  t \big]\mathbf{1}^T+ \nu \diag(s^*)$. 
  \item The $v$-$z$ block has size $n \times N$ with the $ij$-th column as $\nu \tilde{s}^{(ij)} + b^{(ij)}  
$. 
\end{itemize}
If the conditions in \Cref{thm: certify SoS Quartically Regularized Polynomial} are satisfied, then $\Q \succeq 0$.
\end{corollary}

\begin{proof}
We provide the full derivation in Appendix~\ref{appendix proof Q corollary}. 
\end{proof}

\begin{example}
    We write the full matrix for the case of $n=2$:
\small
$$
2q(v)  = 
 \begin{bmatrix}
  1 \\ v_1  \\ v_2 \\ u_1   \\ u_2   \\ z_{12}
 \end{bmatrix} 
 \begin{bmatrix} 
 0 &  0  &  0 & 0 & 0  & 0
\\0 &  \nabla^2 m_3(s^*) &  & \sigma s_1 + t_{111} & (\sigma - \nu)  s_1 + t_{111} &  \nu  s_2  + t^*_{112} 
\\0 &   &  & (\sigma - \nu) s_2 + t_{222}  &  \sigma s_2 + t_{222} &  \nu  s_1  + t^*_{122} 
\\ 0 &  \ast  &  \ast &   \frac{\sigma}{2} &  \frac{\sigma}{2} - \frac{\nu}{2} & 0
\\ 0 &  \ast &  \ast   &  \frac{\sigma}{2} - \frac{\nu}{2}  &  \frac{\sigma}{2} & 0
\\ 0 &    \ast &   \ast &   0  &   0 & \nu
 \end{bmatrix}
\begin{bmatrix}
  1 \\ v_1  \\ v_2 \\ u_1   \\ u_2   \\ z_{12}
\end{bmatrix}.
$$ 
\normalsize
The symbol “$*$” denotes symmetric entries in the corresponding positions. By examining the quadratic form \(\Q[\tilde{\omega}]^2\), 
with \(\tilde{\omega} \in \mathbb{R}^{\frac{1}{2}(n+2)(n+1)}\), 
we find that \(\Q[\tilde{\omega}]^2 \ge 0\) holds 
whenever \(\nabla m_3(s^*) = 0\) and \(B(s^*) \succeq 0\).  
It follows that \(\Q \succeq 0\), and consequently 
the corresponding polynomial \(q(v)\)  is SoS. Lastly, we note that the expression of $\Q$ is not unique up to a factor of  $\nu$, $\delta$, and $\beta$ (see \Cref{other form of Q n=2}). 
\end{example}

\subsection{Scaling of Global Minimizers under Large Regularization}
\label{sec step size bound for s star}

{

Theorem \ref{thm: certify SoS Quartically Regularized Polynomial} reduces the SoS question to the matrix condition $B(s^*)\succeq 0$. 
To verify this condition, we need quantitative control on the size of the global minimizer $s^*$. 
The key point is that the regularization contributes the term $\sigma \|s^*\|^2 I_n$ to $B(s^*)$, so its strength depends not only on $\sigma$ itself but also on the scale of $s^*$.

The next theorem shows that, under mild assumptions, the minimizer obeys the scaling law
\[
\|s^*\|=O(\sigma^{-1/3}).
\]
This mechanism forces $B(s^*)$ to become positive semidefinite as $\sigma$ sufficiently large. The vector norm, the matrix norm, and the tensor norm are defined and given in Appendix~\ref{appendix tensor 2 norm and F norm}.
}


\begin{theorem} \textbf{(Bound for $s^*$)}
Assume that $\|g\| \neq 0$, let $s^*$ satisfies $\nabla m_3 (s^*) = 0$ and $ m_3 (s^*) \le m_3 (0) $, and $\sigma$ satisfies
\begin{eqnarray}
    \sigma \ge  \max \bigg\{\frac{9 \|H\|^3 }{\|g\|^2}, \bigg( \frac{3 \|T\|^3}{ 8 \|g\|}\bigg)^{1/2}  \bigg\}, 
    \label{bound for sigma thm}
\end{eqnarray}
then, 
\begin{eqnarray}
\bigg(\frac{\|g\|}{3 \sigma} \bigg)^{1/3} \le \|s^*\| \le 2 \bigg(\frac{\|g\|}{ \sigma} \bigg)^{1/3}  .
    \label{bound for s thm}
\end{eqnarray}
\label{thm bound for $s^*$}
\end{theorem}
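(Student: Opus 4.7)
The plan is to exploit the first-order optimality condition $\nabla m_3(s^*) = 0$, which rearranges to
$$\sigma\,\|s^*\|^2\, s^* \;=\; -\!\left(g + Hs^* + \tfrac{1}{2}T[s^*]^2\right).$$
Taking Euclidean norms on both sides and using the forward and reverse triangle inequalities together with the operator bounds $\|Hs^*\|\le\|H\|\|s^*\|$ and $\|T[s^*]^2\|\le\|T\|\|s^*\|^2$ (recalled in the tensor-norm definitions \eqref{def of norm 1}--\eqref{def of norm 2}) gives the two-sided estimate
\begin{equation}
\bigl|\sigma\|s^*\|^3 - \|g\|\bigr| \;\le\; \|H\|\,\|s^*\| + \tfrac{1}{2}\|T\|\,\|s^*\|^2. \tag{$\star$}
\end{equation}
Note also that $\|g\|\neq 0$ combined with $\nabla m_3(s^*)=0$ forces $s^*\neq 0$, so $\|s^*\|>0$ and we may freely divide by powers of $\|s^*\|$. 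The whole proof is then a careful manipulation of $(\star)$ that calibrates the two "perturbation'' terms $\|H\|\|s^*\|$ and $\tfrac{1}{2}\|T\|\|s^*\|^2$ against $\|g\|$ using the hypotheses in \eqref{bound for sigma thm}.

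For the \textbf{upper bound} $\|s^*\|\le 2(\|g\|/\sigma)^{1/3}$, I would argue by contradiction: assume $\sigma\|s^*\|^3 > 8\|g\|$. Then $(\star)$ yields $7\|g\| < \|H\|\|s^*\| + \tfrac{1}{2}\|T\|\|s^*\|^2$, which forces at least one of the two perturbation terms to be a large multiple of $\|g\|$. Performing a simple case analysis---"$\|H\|\|s^*\|$ dominates'' versus "$\tfrac{1}{2}\|T\|\|s^*\|^2$ dominates''---and combining each case with the upper side of $(\star)$ produces a polynomial inequality in $\|s^*\|$ that is solvable in closed form. Substituting the two lower bounds $\sigma\ge 9\|H\|^3/\|g\|^2$ and $\sigma\ge (3\|T\|^3/(8\|g\|))^{1/2}$ then contradicts $\sigma\|s^*\|^3>8\|g\|$, establishing the upper bound. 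As an auxiliary tool I would keep the energy inequality $m_3(s^*)\le m_3(0)$, which gives $\frac{\sigma}{4}\|s^*\|^4 \le \|g\|\|s^*\| + \tfrac{1}{2}\|H\|\|s^*\|^2+\tfrac{1}{6}\|T\|\|s^*\|^3$; dividing by $\|s^*\|$ provides a cleaner polynomial in $\|s^*\|$ should the direct case split become messy.

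For the \textbf{lower bound} $\|s^*\|\ge (\|g\|/(3\sigma))^{1/3}$, I would again start from $(\star)$, now in the form $\sigma\|s^*\|^3 \ge \|g\| - \|H\|\|s^*\| - \tfrac{1}{2}\|T\|\|s^*\|^2$, and feed in the already-proven upper bound $\|s^*\|\le 2(\|g\|/\sigma)^{1/3}$. Then $\|H\|\|s^*\|\le 2\|H\|(\|g\|/\sigma)^{1/3}$; using $\sigma\ge 9\|H\|^3/\|g\|^2$ this is at most a small fraction of $\|g\|$ (of the order $2\cdot 9^{-1/3}\|g\|$). Similarly, $\tfrac{1}{2}\|T\|\|s^*\|^2 \le 2\|T\|(\|g\|/\sigma)^{2/3}$, and $\sigma^2\ge 3\|T\|^3/(8\|g\|)$ bounds this as a fraction of $\|g\|$ as well. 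The constants $9$ and $3/8$ are calibrated so that the two perturbation fractions sum to at most $\tfrac{2}{3}\|g\|$, leaving $\sigma\|s^*\|^3\ge \tfrac{1}{3}\|g\|$, exactly the claimed lower bound.

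The main obstacle I anticipate is the arithmetic bookkeeping in the upper-bound step: $(\star)$ is essentially a cubic inequality in $\|s^*\|$ whose coefficients mix $\|g\|$, $\|H\|$, $\|T\|$, and $\sigma$, and the numerical constants $9$ and $3/8$ in \eqref{bound for sigma thm} appear to be quite tight---a naive single-term case analysis (each perturbation term handled in isolation) gives a slightly weaker threshold than stated. The fix is either to couple both cases simultaneously via a common bound $\|s^*\|\le 2(\|g\|/\sigma)^{1/3}$ as an ansatz and verify consistency, or to exploit the energy bound $m_3(s^*)\le m_3(0)$ as a second independent inequality and combine it with $(\star)$ so that the perturbation terms enter only linearly rather than as competing powers of $\|s^*\|$.
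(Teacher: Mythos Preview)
Your lower-bound argument has a genuine arithmetic gap. You claim that after plugging in the upper bound $\|s^*\|\le 2(\|g\|/\sigma)^{1/3}$, the two perturbation terms $\|H\|\|s^*\|$ and $\tfrac12\|T\|\|s^*\|^2$ together are at most $\tfrac23\|g\|$. But with $\sigma\ge 9\|H\|^3/\|g\|^2$ one gets only
\[
\|H\|\|s^*\|\;\le\;2\|H\|\Big(\tfrac{\|g\|}{\sigma}\Big)^{1/3}\;\le\;\frac{2}{9^{1/3}}\,\|g\|\;\approx\;0.96\,\|g\|,
\]
and with $\sigma^2\ge 3\|T\|^3/(8\|g\|)$,
\[
\tfrac12\|T\|\|s^*\|^2\;\le\;2\|T\|\Big(\tfrac{\|g\|}{\sigma}\Big)^{2/3}\;\le\;2\Big(\tfrac{8}{3}\Big)^{1/3}\|g\|\;\approx\;2.77\,\|g\|.
\]
The sum is roughly $3.7\,\|g\|$, not $\tfrac23\|g\|$; feeding this into $\sigma\|s^*\|^3\ge\|g\|-(\cdots)$ yields nothing. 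To make your scheme work you would need thresholds like $\sigma\ge 216\|H\|^3/\|g\|^2$ and $\sigma^2\ge 216\|T\|^3/\|g\|$, an order of magnitude stronger than \eqref{bound for sigma thm}.

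The paper avoids this loss entirely by \emph{not} subtracting the perturbations. It uses the very same consequence of stationarity,
\[
\|g\|\;\le\;\|H\|\|s^*\|+\tfrac12\|T\|\|s^*\|^2+\sigma\|s^*\|^3,
\]
but then argues by pigeonhole: one of the three terms on the right is at least $\|g\|/3$, so
\[
\|s^*\|\;\ge\;\min\Big\{\tfrac{\|g\|}{3\|H\|},\ \big(\tfrac{2\|g\|}{3\|T\|}\big)^{1/2},\ \big(\tfrac{\|g\|}{3\sigma}\big)^{1/3}\Big\},
\]
and the two conditions in \eqref{bound for sigma thm} are \emph{exactly} what forces the third entry to be the minimum. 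This is the missing idea.

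For the upper bound your route via $(\star)$ actually works and is a genuine alternative to the paper's proof. The paper instead uses the energy hypothesis $m_3(s^*)\le m_3(0)$, splitting $\tfrac{\sigma}{4}\|s^*\|^4$ into three pieces $\tfrac{\sigma}{8}+\tfrac{\sigma}{16}+\tfrac{\sigma}{16}$ to absorb $g^Ts^*$, $\tfrac12 H[s^*]^2$, and $\tfrac16 T[s^*]^3$ separately. Your case split from $(\star)$ gives, in each branch, thresholds $\sigma<\tfrac{64}{343}\|H\|^3/\|g\|^2$ or $\sigma^2<\tfrac{64}{343}\|T\|^3/\|g\|$, well below $9$ and $\tfrac38$; so your worry that the constants are ``quite tight'' for the upper bound is unfounded, and in fact you never need $m_3(s^*)\le m_3(0)$ there.
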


\begin{proof}
\noindent
\textbf{To prove the upper bound for $\|s^*\|$}, we rewrite $m_3$ at $s^*$ as
\begin{eqnarray*}
m_3(s^*) - m_3(0) &=&  \bigg( g^T s^*  + \frac{\sigma}{8} \|s^*\|^4\bigg)  + \bigg( \frac{1}{2}H[s^*]^2 + \frac{\sigma}{16} \|s^*\|^4 \bigg) + \bigg(\frac{1}{6}T[s^*]^3+\frac{\sigma}{16} \|s^*\|^4 \bigg) 
\\ &\ge&  \bigg(- \|g\| + \frac{\sigma}{8} \|s^*\|^3\bigg)\|s^*\|  + \bigg(-\frac{\lambda_*}{2} + \frac{\sigma}{16} \|s^*\|^2 \bigg)\|s^*\|^2 + \bigg(-\frac{\|T\|}{6}+\frac{\sigma}{16} \|s^*\| \bigg) \|s^*\|^3
\end{eqnarray*}
where the last inequality comes from $g^T s  \le  \|g\| \|s\|$, $ H[s^*]^2  \ge -\lambda_* \|s^*\|$ and  $ T[s^*]^3  \ge -\|T\|\|s^*\|^3$. 
Assume that \eqref{bound for sigma thm} holds and assume for contradiction the global minimum of $m_3(s)$ happens at $\|s^*\|> r_c: = \big(\frac{8\|g\|}{ \sigma} \big)^{1/3} $.  Then, 
\begin{eqnarray}
m_3(s^*) - m_3(0)>  \underbrace{\bigg[- \|g\| + \frac{\sigma}{8} r_c^3\bigg]}_{= 0}\|s^*\|  + \underbrace{\bigg[-\frac{\lambda_*}{2} + \frac{\sigma}{16} r_c^2 \bigg]}_{\ge 0}\|s^*\|^2 + \underbrace{\bigg[-\frac{\|T\|}{6}+\frac{\sigma}{16}  r_c \bigg]}_{\ge 0} \|s^*\|^3 \ge  0.
\label{eq contr}
\end{eqnarray}
However, we have $m_3(s^*) \le m_3(0)$, we arrives at a contradiction. 
Note that in \eqref{eq contr}, the first term is zero using $r_c=   \big(\frac{8\|g\|}{ \sigma} \big)^{1/3}$. The second and third term is positive using $\sigma \ge \frac{9 \|H\|^3 }{\|g\|^2}$ and $\sigma \ge \big( \frac{3 \|T\|^3}{ 8 \|g\|}\big)^{1/2}$ from condition \eqref{bound for sigma thm}, 
\begin{eqnarray*}
 \frac{\sigma}{16} r_c^2  &=&   \bigg(\frac{1}{4} \|g\|^{2/3} \bigg) \sigma^{1/3} \underset{\eqref{bound for sigma thm}}{\ge}  \bigg(\frac{1}{4} \|g\|^{2/3} \bigg)  9^{1/3} \|H\| \|g\|^{-2/3} > \frac{\|H\|}{2} > \frac{\lambda_*}{2},
\\
 \frac{\sigma}{16}  r_c  &=&   \bigg(\frac{1}{8} \|g\|^{1/3} \bigg) \sigma^{2/3} \underset{\eqref{bound for sigma thm}}{\ge}  \bigg(\frac{1}{8} \|g\|^{1/3}  \bigg) \bigg(\frac{8}{3}\bigg)^{1/3} \|T\| \|g\|^{-1/3} > \frac{\|T\|}{6}.
\end{eqnarray*}

\noindent
\textbf{To prove the lower bound for $\|s^*\|$}, 
\begin{eqnarray}
\big\|g\big\| &=& \big\|\nabla m_3(0)\big\| = \big\|\nabla m_3(s^*) - \nabla m_3(0)\big\| \le  \bigg\| H[s^*]+ \frac{1}{2} T[s^*]^2  + \sigma \big\|s^*\big\|^2s^*\bigg\| \notag
\\ &\le&  \| H\|\|s^*\|+ \frac{1}{2}  \|T\|\|s^*\|^2  + \sigma \|s^*\|^3. 
\label{lip on m_3}
\end{eqnarray}
Therefore, we have 
$$
\frac{\|g\|}{3}  \le  \| H \|  \| s^*\big\|, 
\quad \text{or}  \quad 
\frac{\|g\|}{3}  \le   \frac{1}{2} \|T\|\|s^*\|^2,   
\quad \text{or}  \quad 
\frac{\|g\|}{3}  \le  \sigma \|s^*\|^3.  
$$
This gives 
\begin{eqnarray}
\|s^*\| \ge \min \bigg\{ \frac{\|g\|}{3 \| H \| },  \bigg(\frac{2\|g\|}{3  \|T\|}\bigg)^{1/2}, \bigg(\frac{\|g\|}{3 \sigma} \bigg)^{1/3} \bigg\}. 
\label{lower bound for sstar}
\end{eqnarray}
From \eqref{bound for sigma thm}, we have 
\begin{eqnarray*}
\sigma \ge \frac{9 \|H\|^3 }{\|g\|^2},  \qquad &\Rightarrow& \qquad  \bigg(\frac{\|g\|}{3 \sigma} \bigg)^{1/3}   \le \frac{\|g\|}{3 \| H \| }
\\
\sigma \ge \bigg( \frac{3 \|T\|^3}{ 8 \|g\|}\bigg)^{1/2}  \qquad &\Rightarrow&  \qquad  \bigg(\frac{\|g\|}{3 \sigma} \bigg)^{1/3}   \le \bigg(\frac{2\|g\|}{3  \|T\|}\bigg)^{1/2}. 
\end{eqnarray*}
Therefore, under \eqref{bound for sigma thm},  \eqref{lower bound for sstar} simplifies to $\|s^*\| \ge \big(\frac{\|g\|}{3 \sigma} \big)^{1/3} .$
\end{proof}

\begin{remark} (Global Minimum Attained)
{For any $\sigma > 0$, the model $m_3(s)$ is bounded below and coercive and
$
\|s\| \to \infty $ gives $ m_3(s) \to +\infty .
$
The argument used to derive an upper bound on $\|s^*\|$ implies that
$m_3(s) \le m_3(0)$ can only occur when $\|s\| \le r_c$ for some positive $r_c$. Consequently, the constrained
minimization problem
$
s_r^* := \arg\min_{\|s\| \le r_c} m_3(s)
$
admits a global minimizer (not just a finite global minimum). Since any global minimizer $s^*$ of $m_3$ must satisfy
$m_3(s^*) \le m_3(0)$, we conclude that $\|s^*\|\leq r_c$,
and hence a global minimizer of
$m_3$ is attained for any $\sigma > 0$.}
    \label{remark global min attained}
\end{remark}


\subsection{Importance of the Regularization Term}
\label{sec geneeral B psd}

{

We now combine the two ingredients developed above: the constructive SoS criterion from \Cref{thm: certify SoS Quartically Regularized Polynomial} and the scaling estimate for $s^*$ from \Cref{thm bound for $s^*$}. 
The goal is to show that the residual matrix $B(s^*)$ becomes positive semidefinite once the regularization is sufficiently large.

The key idea is that in the expression of $B(s^*)$, the dominant positive contribution comes from the term $\sigma \|s^*\|^2 I_n. $
By \Cref{thm bound for $s^*$}, this term grows as $\sigma^{1/3}$ as $\sigma\to\infty$ which acts as a uniform positive shift in all directions. 
All remaining terms in $B(s^*)$ are lower-order contributions depending on $H$, $T$, $t$, $b^{(ij)}$, and $s^*$. 
We show below that, for sufficiently large $\sigma$, the regularization term dominates these indefinite components, so that
\[
B(s^*)\succeq 0.
\]
This yields the main conclusion of the section: sufficiently strong quartic regularization forces the shifted model to become SoS. Moreover, the argument provides an explicit lower bound on $\sigma$ in terms of the coefficients of $m_3$.
}


\begin{theorem} \textbf{($B(s^*)$ is PSD for large $\sigma$)}
Assume that $\|g\| \neq 0$, let $s^*$ satisfies $\nabla m_3 (s^*) = 0$ and $ m_3 (s^*) \le m_3 (0) $, and $\sigma$ satisfies
\begin{eqnarray}
   {{ \sigma \ge  \max \bigg\{\frac{9 \|H\|^3 }{\|g\|^2},  \bigg( \frac{3 \|T\|^3}{ 8 \|g\|} \bigg)^{1/2},   \bigg( \frac{3\cdot 32^3 \|T\|^3}{  \|g\|} \bigg)^{1/2},  \frac{ 3^2\cdot 6^3\lambda_*^3 }{\|g\|^2}\bigg\}= \max \bigg\{\frac{9\cdot 6^3 \|H\|^3 }{\|g\|^2},  \bigg( \frac{3\cdot 32^3 \|T\|^3}{  \|g\|} \bigg)^{1/2} \bigg\} }}
    \label{bound for sigma B psd_new}
\end{eqnarray}
where the norms and  $\lambda_*$ are defined in  \eqref{def of norm 1} -- \eqref{def of norm 2}. Then $B(s^*)$ defined in \eqref{eq:B-sstar} with $\nu = \frac{\sigma}{2}$  is positive semi-definite. Moreover, then, $m_3(s^*+v)-m_3(s^*)$ is SoS and consequently, \(s^*\) is a global minimizer. 
\label{thm B psd}
\end{theorem}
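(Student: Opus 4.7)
The plan is to verify the positive semidefiniteness condition \eqref{eq:B-sstar} of \Cref{thm: certify SoS Quartically Regularized Polynomial} for the specific choice $\nu=\sigma/2$, and then immediately invoke that theorem to obtain both the SoS representation of $q(v)=m_3(s^*+v)-m_3(s^*)$ and the global optimality of $s^*$. With $\nu=\sigma/2$, the matrix $B(s^*)$ simplifies substantially: the combination $\sigma\|s^*\|^2I_n-\nu(\|s^*\|^2I_n-s^*{s^*}^T)$ collapses to the positive-semidefinite block $\tfrac{\sigma}{2}\|s^*\|^2I_n+\tfrac{\sigma}{2}s^*{s^*}^T$, thereby isolating a clean ``dominant'' curvature term $\tfrac{\sigma}{2}\|s^*\|^2 I_n$ whose strength scales with $\|s^*\|^2$ and can be used to absorb all negative contributions.

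For any test vector $v\in\mathbb{R}^n$ I would lower-bound $v^TB(s^*)v$ term-by-term, using $v^THv\ge-\lambda_*\|v\|^2$, the tensor inequality $|T[s^*][v]^2|\le\|T\|\|s^*\|\|v\|^2$, Cauchy--Schwarz bounds $|4(s^{*T}v)(t^Tv)|\le 4\|s^*\|\|t\|\|v\|^2$ and $\tfrac{4}{\sigma}(t^Tv)^2\le\tfrac{4}{\sigma}\|t\|^2\|v\|^2$, and analogous estimates on each summand in $\sum_{i<j}(\tilde{s}^{(ij)}b^{(ij)T}+b^{(ij)}\tilde{s}^{(ij)T}+\tfrac{2}{\sigma}b^{(ij)}b^{(ij)T})$. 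Since $t$ and $\{b^{(ij)}\}$ are built from entries of the symmetric tensor $T$, their Euclidean norms are controlled by a constant multiple of $\|T\|$ (or $\|T\|_F$), while $\sum_{i<j}\|\tilde{s}^{(ij)}\|^2=(n-1)\|s^*\|^2$ via a direct count. Combining these estimates yields a schematic lower bound of the form
\begin{equation*}
v^TB(s^*)v \;\ge\; \Big(\tfrac{\sigma}{2}\|s^*\|^2 \;-\;\lambda_* \;-\; c_1\|T\|\|s^*\| \;-\; c_2\sigma^{-1}\|T\|^2\Big)\|v\|^2,
\end{equation*}
for explicit constants $c_1,c_2$ absorbing the Cauchy--Schwarz and pair-sum factors.

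At this stage I would invoke the lower bound $\|s^*\|\ge(\|g\|/(3\sigma))^{1/3}$ from \Cref{thm bound for $s^*$}; notice that its hypothesis \eqref{bound for sigma thm} is implied by the stronger threshold \eqref{bound for sigma B psd_new}, so the bound on $\|s^*\|$ is automatically in force. Substituting, $\tfrac{\sigma}{2}\|s^*\|^2$ grows like $\sigma^{1/3}$, the cross term $c_1\|T\|\|s^*\|$ decays like $\sigma^{-1/3}$, and $c_2\sigma^{-1}\|T\|^2$ decays like $\sigma^{-1}$, so for sufficiently large $\sigma$ the dominant term individually dominates each of the three negative corrections. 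Demanding that each of the three scalar inequalities holds produces precisely the thresholds $9\cdot 6^3\|H\|^3/\|g\|^2$ and $(3\cdot 32^3\|T\|^3/\|g\|)^{1/2}$ recorded in \eqref{bound for sigma B psd_new}, after absorbing the weaker $\lambda_*$-threshold into the $\|H\|$-threshold via $\lambda_*\le\|H\|$.

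The main obstacle I anticipate is the careful bookkeeping of the pair sum involving $\tilde{s}^{(ij)}$ and $b^{(ij)}$: although each individual rank-one summand is easy to handle, the $\binom{n}{2}$-term sum carries an apparent dimension dependence that must be shown to cancel. The right bookkeeping is to pair the growth $\sum_{i<j}\|\tilde{s}^{(ij)}\|^2=(n-1)\|s^*\|^2$ against a matching control of $\sum_{i<j}\|b^{(ij)}\|^2$ by the fixed tensor norm $\|T\|_F^2$ (via the relations between $t_{ijk}$ and the entries of $T$ recorded in \eqref{Tv expression}), so that the remainder becomes dimension-free. Once this dimension-independent estimate is in place, $B(s^*)\succeq 0$ is established, and \Cref{thm: certify SoS Quartically Regularized Polynomial} delivers both the SoS representation of $m_3(s^*+v)-m_3(s^*)$ and the conclusion that $s^*$ is a global minimizer.
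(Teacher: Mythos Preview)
Your proposal is correct and follows essentially the same route as the paper: set $\nu=\sigma/2$, lower-bound $\lambda_{\min}[B(s^*)]$ term by term using $\|T\|_F$ to control the vectors $t$ and $\{b^{(ij)}\}$, then invoke the step-size bound $\|s^*\|\ge(\|g\|/(3\sigma))^{1/3}$ from \Cref{thm bound for $s^*$} (whose hypothesis is indeed implied by \eqref{bound for sigma B psd_new}) to turn the scalar inequality into the explicit threshold on~$\sigma$, and finally apply \Cref{thm: certify SoS Quartically Regularized Polynomial}. The paper arrives at the intermediate bound $\lambda_{\min}[B(s^*)]\ge \tfrac{\sigma}{2}\|s^*\|^2-7\|T\|_F\|s^*\|-\tfrac{6}{\sigma}\|T\|_F^2-\lambda_*$ and then reduces to $\|s^*\|\ge\max\{32\|T\|_F/\sigma,\ 6(\lambda_*/\sigma)^{1/2}\}$, which is precisely the computation you sketch.
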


\begin{proof}
When  $\nu = \frac{\sigma}{2}$ in \eqref{eq:B-sstar}, we have
\small
\begin{eqnarray*}
B(s^*)= H + T[s^*] + \frac{\sigma}{2} \|s^*\|^2 I_n + \frac{\sigma}{2}  s^*{s^*}^T 
      - \frac{4 t t^T}{\sigma} 
      - 2 \big(s^* t^T + t {s^*}^T\big)
      -\sum_{1 \le i < j \le n} 
    \Big[
      \tilde{s}^{(ij)} b^{(ij)T}  + b^{(ij)} \tilde{s}^{(ij)T}
      + \frac{2b^{(ij)} b^{(ij)T} }{\sigma}
    \Big] 
\end{eqnarray*}
\normalsize 
We give some lower bounds for the eigenvalues of the terms in $B(s^*)$, using the inequalities
\begin{eqnarray}    
 {-2 \|t\|\cdot\|s^*\| I_n \preceq} \,\,s^* t^T + t {s^*}^T &\preceq & 2 \|t\|\cdot\|s^*\| I_n,
 \qquad   0  \preceq  t t^T \preceq  \|t\|^2 I_n,
 \label{rank one matrix bound 1}
\\
  0  \preceq \sum_{1 \le i < j \le n} b^{(ij)} b^{(ij)T} &\preceq & \sum_{1 \le i < j \le n} \|b^{(ij)}\|^2 I_n  \preceq  \|T\|_F^2 I_n, \quad 0  \preceq    s^*{s^*}^T,  \notag
\\
  {-2 \|T\|_F \|s^*\| I_n \preceq}   \sum_{1 \le i < j \le n}  \Big[  \tilde{s}^{(ij)} b^{(ij)T}  + b^{(ij)} \tilde{s}^{(ij)T}  \Big]  &\preceq &  2 \sum_{1 \le i < j \le n}  \| \tilde{s}^{(ij)}  \| \cdot \| b^{(ij)T}   \| I_n  
 \preceq 2 \|T\|_F \|s^*\| I_n. \notag
  \label{rank one matrix bound 2}
\end{eqnarray}
Using the upper bounds in \eqref{rank one matrix bound 1},
{{and definition of norms in \eqref{def of norm 1} -- \eqref{def of norm 2}, we have
\begin{eqnarray}
   \lambda_{\min} \big[B(s^*) \big] &\ge& -\lambda_* - \|T\| \|s^*\| +  \frac{\sigma}{2}\|s^*\|^2 - \frac{4}{\sigma} \|t\|^2  - 4 \|t\|\|s^*\| - 2 \|T\|_F \|s^*\| - \frac{2}{\sigma}  \|T\|_F^2,
   \label{min evalue of B}\\
   &\ge& \frac{\sigma}{2}\|s^*\|^2-7 \|T\|_F\|s^*\|-\frac{6}{\sigma}\|T\|^2_F-\lambda_*,\label{min evalue of B_new}
\end{eqnarray}
where in the second inequality, we used 
 $\|T\|_F \ge \|T\|$ and $\|T\|_F \ge \|t\|$,  and re-arrangements. Thus, $\lambda_{\min} \big[B(s^*) \big]\geq 0$
 if the right-hand side of \eqref{min evalue of B_new} is non-negative, which in turn is equivalent to $\|s_*\|$ being greater or equal to the positive root of  the right-hand side of \eqref{min evalue of B_new} (as a function of $\|s_*\|$), namely, $\|s^*\|\geq \frac{7}{\sigma}\|T\|_F+\frac{1}{\sigma}\sqrt{2\sigma\lambda_*+61 \|T\|^2_F}$. In particular, the latter is achieved if $\|s^*\|\geq \max\left\{\frac{32}{\sigma}\|T\|_F, 6\left(\frac{\lambda_*}{\sigma}\right)^{1/2}\right\}$. We now recall the lower bound in
 \eqref{bound for s thm}, namely,  $ \|s^*\|\ge \big(\frac{\|g\|}{3 \sigma} \big)^{1/3}$, which we easily check that it satisfies $\big(\frac{\|g\|}{3 \sigma} \big)^{1/3}\geq \max\left\{\frac{32}{\sigma}\|T\|_F, 6\left(\frac{\lambda_*}{\sigma}\right)^{1/2}\right\}$ when $\sigma$ satisfies \eqref{bound for sigma B psd_new}.  }}

\end{proof}

The bound in \eqref{bound for sigma B psd_new} contains \(\|g\|\) in the denominator. 
When the quartically regularized polynomial is used as a subproblem within a higher-order method, 
the stopping condition is typically chosen as \(\|g\| \ge \epsilon_g\), meaning that the algorithm 
terminates once an approximate critical point of the subproblem has been identified.  
As shown in \cite{cartis2020concise}, even an approximate critical point is sufficient to guarantee 
convergence and complexity of the outer algorithm.  
Therefore, the bound in \eqref{bound for sigma B psd_new} is well defined in the context of high-order optimization methods.  
For completeness, we also discuss the behaviour in the degenerate case \(\|g\| = 0\) 
in \Cref{remark what if g=0}.

\begin{remark} We give a discussion on what would happen if \(\|g\| = 0\).
\begin{itemize} \setlength{\itemindent}{0pt}
    \item If \(H  \succeq \delta I_n \succ 0\), then \(m_3(s) - m_3(s^*)\) is both SoS and SoS-convex for any 
    \(\sigma > \tfrac{8\|T\|^2}{\delta}\); see \Cref{sec: Locally convex m3}.
    
    \item If \(T = 0\), or if \(T\) has special structure
     \(T[s]^3 = t^Ts\, \|s\|^2\),  
    then \(m_3(s) - m_3(s^*)\) is SoS for every \(\sigma > 0\); 
    see \Cref{sec special T}.
    
    \item If \(H\) is not PSD and \(T\) is general, 
    determining whether \(m_3(s) - m_3(s^*)\) becomes SoS for sufficiently 
    large \(\sigma > 0\) remains an open question. 
    We leave this analysis to future work.
\end{itemize}
\label{remark what if g=0}
\end{remark}



Beyond the behavior of \(B(s^*)\), 
we also remark that if \(s^*\) is any second-order local minimizer of the model \(m_3(s)\), 
then \(\|s^*(\sigma)\|\) is a non-increasing function of \(\sigma\). 

\subsubsection{The Locally Convex Regime: Global Convexification and SoS Convex}
\label{sec: Locally convex m3}


If \(m_3\) is locally strongly  convex at \(s = 0\), i.e., 
\( H=\nabla^2 m_3(0) \succeq \delta I_n \succ 0\), 
then for sufficiently large \(\sigma\), the model \(m_3\) remains strongly convex and is, in fact, SoS convex for all \(s \in \mathbb{R}^n\) \cite{ahmadi2023higher}. 
A detailed discussion on how convexification relates to SoS and SoS-convexity 
can be found in~\cite[Sec.~2.3.2]{zhu2025global}. 
Here we highlight the key arguments.

If \(m_3\) is strongly convex at \(s = 0\), then ~\cite[Lemma~2.3]{zhu2025global} shows that choosing \(\sigma\) such that
\begin{eqnarray}
    \sigma > \frac{\|T\|^2}{4\delta}
    \label{sigma bound local convex}
\end{eqnarray}
guarantees \(m_3\) is strongly convex for all \(s\in\mathbb{R}^n\). 
A sufficient condition for a unique global minimizer is the global strong convexity of $m_3$, i.e., $\nabla^2 m_3(s)\succeq \delta I_n$ for all $s$ and some $\delta >0$. Then, \eqref{sigma bound local convex} ensures $m_3$ has a unique global minimizer. 
Moreover, if we increase \(\sigma\) further so that
\begin{eqnarray}
\sigma > \frac{8\|T\|^2}{\delta},
    \label{sigma bound SoS convex}
\end{eqnarray}
then by~\cite{ahmadi2023higher}, \(m_3\) is SoS-convex. Then, it follows from \Cref{lemma SoS convex} 
that \(q(s) = m_3(s) - m_3(s^*)\) is SoS.

\subsubsection{The Locally Nonconvex Regime: Global Optimality without Convexity}
\label{sec: Locally nonconvex m3 necc suff}

The size of $\sigma$ is closely related to the idea of convexification of $m_3$ with large $\sigma$. 
If \(m_3\) is locally nonconvex at \(s = 0\), then no matter how large \(\sigma > 0\) is chosen, it is not possible to make \(m_3\) globally convex for all \(s \in \mathbb{R}^n\). 
However, by selecting \(\sigma\) sufficiently large, \(m_3\) becomes convex everywhere except within a small neighborhood of the origin. 
In this section, we show that for sufficiently large $\sigma$, {the stationary points (may be non-unique)} lie outside the small locally nonconvex region and are global minimizers of the nonconvex model $m_3$. Moreover, in this regime, the global necessary and sufficient optimality 
conditions established in \cite{zhu2025global} coincide (more details are discussed~\cite[Sec 2.3.1]{zhu2025global} and Appendix \ref{appendix global opt}). \(m_3(s)\) is SoS exact with zero levels of relaxation. If the global minimizer is unique, the minimum can then be found in polynomial time.


\begin{lemma} (Lemma 2.4 from \cite{zhu2025global})
If $\lambda_{\min}[H] \leq 0$, let $s_0 > 0$ be any scalar, and suppose that $\sigma$ satisfies
\begin{eqnarray}
\sigma > 2 \max \bigg\{\lambda_* s_0^{-2}, \quad  \|T\|  s_0^{-1}\bigg\},
\label{convex sigma bound DTM}
\end{eqnarray}
then $m_3(s)$ is convex for all $\|s\| \geq s_0$.  
\label{lemma Convexification nonconvex}
\end{lemma}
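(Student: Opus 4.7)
The plan is to certify convexity on the region $\|s\| \geq s_0$ by showing that the Hessian
$\nabla^2 m_3(s) = H + T[s] + \sigma\bigl(2ss^T + \|s\|^2 I_n\bigr)$
is positive semidefinite there. Since $2\sigma ss^T \succeq 0$, I may drop that term and focus on $H + T[s] + \sigma \|s\|^2 I_n$. The strategy is a straightforward eigenvalue lower bound followed by a suitable splitting of the positive quartic-regularization contribution.

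First I would bound each summand spectrally. By the definition of $\lambda_*$ in \eqref{def of norm 1} we have $H \succeq -\lambda_* I_n$; by the definition of the tensor operator norm in \eqref{def of norm 2}, the symmetric matrix $T[s]$ satisfies $T[s] \succeq -\|T\|\,\|s\|\, I_n$; and the regularization term contributes $\sigma \|s\|^2 I_n$. Combining these three inequalities yields
\begin{equation*}
\nabla^2 m_3(s) \;\succeq\; \bigl(\sigma \|s\|^2 - \|T\|\,\|s\| - \lambda_*\bigr) I_n,
\end{equation*}
so it suffices to show that the scalar $\sigma \|s\|^2 - \|T\|\,\|s\| - \lambda_*$ is nonnegative whenever $\|s\| \geq s_0$.

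The key step is to split $\sigma \|s\|^2$ into two equal halves, each of which absorbs one of the negative terms. For the cubic contribution, I would check that $\tfrac{\sigma}{2}\|s\|^2 \geq \|T\|\,\|s\|$, which reduces to $\sigma \|s\| \geq 2\|T\|$ and is guaranteed by $\sigma > 2\|T\|/s_0$ combined with $\|s\|\geq s_0$. For the indefinite Hessian contribution, I would check that $\tfrac{\sigma}{2}\|s\|^2 \geq \lambda_*$, which reduces to $\sigma \|s\|^2 \geq 2\lambda_*$ and is guaranteed by $\sigma > 2\lambda_*/s_0^{-2}$ combined with $\|s\|\geq s_0$. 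Both of these are encapsulated by the single assumption \eqref{convex sigma bound DTM}, completing the argument.

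There is no substantial obstacle here; the proof is a direct operator-norm estimate. The only mildly delicate point is the choice of splitting---using equal halves of $\sigma \|s\|^2$ is what matches the clean $\max$ structure in \eqref{convex sigma bound DTM} with the factor $2$ in front, and it is essential that we use $\|s\| \geq s_0$ (rather than some other monotone function of $\|s\|$) to convert the dependence on $\|s\|$ into a uniform sufficient condition on $\sigma$ alone.
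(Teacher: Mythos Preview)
Your argument is correct and is essentially the canonical one. The paper does not supply its own proof of this lemma (it is quoted verbatim as Lemma~2.4 from \cite{zhu2025global}), but the same spectral lower bound $T[s]\succeq -\|T\|\,\|s\|\,I_n$ you rely on is exactly what the paper invokes elsewhere (e.g.\ in the proof of \Cref{thm B psd}, eq.~\eqref{min evalue of B}), so your approach is fully consistent with the surrounding development. One trivial typo: in the last paragraph you wrote ``$\sigma > 2\lambda_*/s_0^{-2}$'' where you mean $\sigma > 2\lambda_* s_0^{-2}$ (equivalently $2\lambda_*/s_0^{2}$).
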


\begin{corollary}
Assume that \(m_3\) is locally nonconvex at \(s = 0\), 
that is, \(H = \nabla^2 m_3(0)\) is not positive definite. 
Let \(s^* \in \mathbb{R}^n\) satisfies
$\nabla m_3(s^*)=0$. 
If $\sigma$ large enough such that~\eqref{sigma bound for necc suff} holds, 
then \(m_3(s)\) is convex for all \( \|s\| \geq \sqrt{\frac{2}{3}}\,\|s^*\| \).
\end{corollary}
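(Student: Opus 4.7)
The plan is to invoke \Cref{lemma Convexification nonconvex} with the specific choice $s_0 := \sqrt{2/3}\,\|s^*\|$, which immediately delivers the desired convexity region $\{\|s\|\ge \sqrt{2/3}\,\|s^*\|\}$ once the hypothesis of that lemma can be verified. Substituting this $s_0$ into the convexity condition $\sigma > 2\max\{\lambda_* s_0^{-2},\;\|T\| s_0^{-1}\}$ and rearranging, the task reduces to showing the two lower bounds
\begin{equation*}
\|s^*\| \;\ge\; \sqrt{\tfrac{3\lambda_*}{\sigma}}
\qquad\text{and}\qquad
\|s^*\| \;\ge\; \tfrac{\sqrt{6}\,\|T\|}{\sigma}.
\end{equation*}

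Both estimates will follow from the universal lower bound on the step length supplied by \Cref{thm bound for $s^*$}, namely $\|s^*\| \ge (\|g\|/(3\sigma))^{1/3}$, which is available as soon as $\sigma$ exceeds the threshold \eqref{bound for sigma thm}. Cubing the first required inequality gives the equivalent scalar condition $\sigma \ge 243\,\lambda_*^3/\|g\|^2$, and raising the second to the sixth power gives $\sigma^2 \ge 3\cdot 6^{3/2}\,\|T\|^3/\|g\|$. Thus, provided $\sigma$ exceeds these two explicit quantities as well as the threshold from \Cref{thm bound for $s^*$}, the lower bound on $\|s^*\|$ propagates into the convexity hypothesis of \Cref{lemma Convexification nonconvex}. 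This is precisely what the referenced bound \eqref{sigma bound for necc suff} is designed to enforce, so under that hypothesis both inequalities hold simultaneously, completing the verification.

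The only real bookkeeping step is therefore to check that \eqref{sigma bound for necc suff} subsumes the prerequisite in \eqref{bound for sigma thm} (so that \Cref{thm bound for $s^*$} applies and the lower bound $\|s^*\|\ge (\|g\|/(3\sigma))^{1/3}$ is legitimate), and that it simultaneously dominates the two explicit powers of $\sigma$ derived above. With those comparisons in hand, applying \Cref{lemma Convexification nonconvex} with $s_0 = \sqrt{2/3}\,\|s^*\|$ yields convexity of $m_3$ on $\{\|s\|\ge \sqrt{2/3}\,\|s^*\|\}$ as claimed.

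The main obstacle, such as it is, is purely quantitative: tracking the constants so that the chain of implications $\sigma \text{ large} \Longrightarrow \|s^*\|\text{ bounded below}\Longrightarrow \text{Lemma hypothesis}$ closes up cleanly. There is no additional structural difficulty, since $\nabla m_3(s^*)=0$ together with the first-order identity already appearing in \Cref{remark q(v) expression} is enough to interact with \Cref{thm bound for $s^*$}; the hypothesis $m_3(s^*)\le m_3(0)$ used there is automatic when $s^*$ is a stationary point of interest (e.g.\ a global minimizer, or indeed any point we compare to $s=0$ via the coercivity argument in \Cref{remark global min attained}).
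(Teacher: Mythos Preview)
Your proposal is correct and follows essentially the same route as the paper. The paper's proof simply quotes Result~1 of \Cref{thm nece = suff}, which packages the inequality $\sigma \ge 3\max\{\lambda_*\|s^*\|^{-2},\,\|T\|\|s^*\|^{-1}\}$, and then feeds this into \Cref{lemma Convexification nonconvex} with $s_0=\sqrt{2/3}\,\|s^*\|$; you instead re-derive that intermediate inequality directly from the lower bound $\|s^*\|\ge(\|g\|/(3\sigma))^{1/3}$ of \Cref{thm bound for $s^*$} and check the constants against \eqref{sigma bound for necc suff}, which is exactly how Result~1 of \Cref{thm nece = suff} is itself proved. Your remark about needing $m_3(s^*)\le m_3(0)$ to invoke \Cref{thm bound for $s^*$} is well taken---the paper's proof relies on this too (via \Cref{thm nece = suff}) without stating it, so you are not missing anything the paper supplies.
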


\begin{proof}
Suppose that \eqref{sigma bound for necc suff} holds. 
Then, by Result~1 in \Cref{thm nece = suff}, we have
\[
\sigma \ge 2 \max \bigg\{
\lambda_* \Big\| \sqrt{\frac{2}{3}}\, s^* \Big\|^{-2}, 
\|T\| \, \Big\| \frac{2}{3} s^* \Big\|^{-1}
\bigg\}
>
2 \max \bigg\{
\lambda_* \|s_0\|^{-2}, 
\|T\| \, \|s_0\|^{-1}
\bigg\},
\]
where \(s_0 = \sqrt{\frac{2}{3}}\, s^*\). 
Therefore, by \Cref{lemma Convexification nonconvex}, 
\(m_3(s)\) is convex for all 
\(\|s\| \ge \|s_0\| 
= \sqrt{\frac{2}{3}}\, \|s^*\|\).
\end{proof}

\begin{remark}[Region of convexification]
If $\sigma$ is sufficiently large so that~\eqref{bound for sigma B psd_new} holds, then any stationary point $s_*$ of $m_3$  {{ with $m_3(s_*)\leq m_3(0)$}} is a global minimizer (\Cref{thm B psd} and \Cref{thm: certify SoS Quartically Regularized Polynomial}). In this regime, $m_3$ is nonconvex only in a small neighborhood of the origin and is convex outside the region
$
\{\, s \in \mathbb{R}^n : \|s\| \le \|s_0\| = \sqrt{\tfrac{2}{3}}\,\|s^*\| \,\}.
$
{Moreover, any global minimizer lies outside this nonconvex region, and no stationary point {{ with $m_3(s_*)\leq m_3(0)$}} exists within it. This follows because, when \eqref{sigma bound for necc suff} holds, every stationary point of $m_3$ {{ with $m_3(s_*)\leq m_3(0)$}} is a global minimizer, and the global minimizer necessarily lies in the convex region, implying that no stationary point {{ with $m_3(s_*)\leq m_3(0)$}} can exist within it.}
\end{remark}

\begin{remark}(Condition for uniqueness of the global minimizer)
As shown in this section, there are no stationary points {{ with $m_3(s_*)\leq m_3(0)$}} inside the inner nonconvex region.
To establish the uniqueness of the global minimizer, one must additionally guarantee
that the stationary point outside the nonconvex region is unique.
In that case, this unique stationary point is also the unique global minimizer.
Without such additional conditions, we can ensure existence of a global minimizer,
but not necessarily uniqueness.
\end{remark}

\begin{remark} (Comparing different $\sigma$ bound)
{\cite[Sec.~2.3.1]{zhu2025global} (also summarized in this paper’s notation in \Cref{thm nece = suff}) provides a bound under which the necessary and sufficient conditions coincide.}
The bounds in \Cref{thm nece = suff} and \Cref{thm B psd} are comparable in magnitude; 
both depend only on the coefficients of \(m_3\), 
and both are of order\footnote{%
In iterative algorithms for minimizing \(f\), 
We solve a sequence of cubic subproblems \(m_3\), 
with a stopping criterion \(\|g\| \ge \epsilon_g\). 
Therefore, in both bounds we have \(\sigma = O(\epsilon_g^{-2})\).} \(\mathcal{O}(\|g\|^{-2})\).  
In the locally nonconvex regime, a stationary point that achieves a decrease in the model value 
is the global minimizer, and the necessary and sufficient conditions coincide.
Since \(m_3(s) - m_3(s^*)\) is a SoS polynomial, 
if the minimizer is unique, the global minimization of \(m_3\) 
can be reformulated as a semidefinite program (SDP), 
from which the global minimum can be recovered in polynomial time.
\end{remark}

\section{Structured Classes of Polynomials with Exact SoS Certificates}
\label{sec: Special Classes of SoS Quartically Regularized Polynomial}

{

The theory developed in Section~\ref{sec: main theory} shows that
the shifted quartically regularized cubic model becomes SoS once the regularization
parameter $\sigma$ is sufficiently large. A natural next question is whether this
large-regularization requirement  can be removed in some cases, for some choices of $g$, $H$ and $T$ in \eqref{m3}.

This section answers this question in several directions. We identify special
classes of quartically regularized polynomials for which the shifted model
\[
q(v)=m_3(s^*+v)-m_3(s^*) \text{ is SoS for \emph{any} $\sigma>0$,}
\]
where $s^*$ denotes a global minimizer. 
These examples show that the phenomenon established in the previous section is not just asymptotic: in certain structured subclasses, SoS exactness is present for all regularization levels.

At the same time, we also show that this behavior is not universal for all regularization norm. In particular, for the classic Schnabel tensor model in 1991~\cite{schnabel1991tensor}, whose regularization is based on a separable quartic norm, the analogue of our result fails in general, even as $\sigma\to\infty$. 
This contrast helps explain why Euclidean quartic regularization plays a special
role in the SoS structure of quartically regularized models.

The section is organized around three positive examples and one negative one:
the univariate case, the quadratic-quartic model ($T=0$), the special tensor model
$T[s]^3=(t^Ts)\|s\|^2$, and finally the Schnabel model. A summary is given in
Table~\ref{table:SoS_conditions_summary}.
}


\begin{table}[h]
\centering
\caption{\small Summary of SoS conditions for shifted quartically regularized polynomials. The table below should be read as a map of the section: it highlights which structural assumptions remove the need for large regularization, and which do not.}
\label{table:SoS_conditions_summary}
\renewcommand{\arraystretch}{1.25}
\setlength{\tabcolsep}{6pt}

\small
\begin{tabular}{p{3.8cm} p{7cm} p{4.5cm}}
\hline
\textbf{Polynomial Type} 
&
\textbf{Condition for $q(v)$ to be SoS} 
&
\textbf{SoS exactness conclusion}
\\ \hline

General $m_3$ 

(\Cref{sec geneeral B psd})
&
$\nabla m_3(s^*) = 0$ and $B(s^*) \succeq 0$ \eqref{eq:B-sstar}
&
Yes, if $\sigma$ is large enough. 
\\ \hline

Univariate $m_3$

(\Cref{sec n =1})
&
$\nabla m_3(s^*) = 0$ and 
$H + \frac{1}{3}T[s^*] + \sigma\|s^*\|^2 
\ge \frac{T^2}{18\sigma}$
&
Yes, for all  $\sigma>0$.
\\ \hline

QQR model ($T=0$)

\eqref{QQR Model}
&
$\nabla m_3(s^*) = 0$ and 
$H + \sigma\|s^*\|^2 I_n \succeq 0$
&
Yes, for all  $\sigma>0$.
\\ \hline

Special $T$ model

$T[s]^3=t^T s\|s\|^2$

(\Cref{sec special T})
&
$\nabla m_3(s^*) = 0$ and 
$B_T(s^*) \succeq 0$ 
\eqref{opt cond n special T}
&
Yes for all  $\sigma>0$.
\\ \hline

Schnabel Model 

(\Cref{sec Schnabel Model})
&
$\nabla m_\mathcal{S}(s^*) = 0$ and 
$B_\mathcal{S}(s^*) \succeq 0$
\eqref{optimality condition sch}
&
Sometimes no, even if $\sigma$ is large. 
\\ \hline
\end{tabular}
\end{table}

\normalsize

\subsection{The Univariate Case: Exactness without Regularization}
\label{sec n =1}

We begin with the one-dimensional case as a useful benchmark. 
Here the large-regularization threshold from the general theory is no longer needed:
once $s^*$ is a global minimizer, the shifted polynomial is automatically SoS for every
$\sigma>0$. This is consistent with Hilbert's classical result that every nonnegative
univariate polynomial is a sum of squares, but the derivation below is tailored to
the quartically regularized model and reveals the relationship with global optimality condition.

\begin{lemma}
   For $n=1$, $s^*$ satisfies
  \begin{eqnarray}
      \nabla m_3(s^*) = 0 \qquad \text{and} \qquad
    H + \frac{1}{3}T[s^*] + \sigma \|s^*\|^2 \ge  \frac{T^2}{18  \sigma},
    \label{opt cond n=1}
  \end{eqnarray}
if and only if $s^*$ is a global minimizer. 
\label{lemma opt cond n=1}
\end{lemma}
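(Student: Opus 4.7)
The plan is to exploit that, for $n=1$, the polynomial $q(v):=m_3(s^*+v)-m_3(s^*)$ is a univariate quartic with a guaranteed double root at $v=0$, which reduces the whole statement to a discriminant condition on a quadratic factor. Using \Cref{remark q(v) expression} and the first-order condition $\nabla m_3(s^*)=0$, and noting that for $n=1$ the cross term $\sigma {s^*}^T v\|v\|^2$ collapses to $\sigma s^* v^3$, I would write
\[
q(v) = \tfrac{1}{2}\bigl(H+Ts^*+3\sigma {s^*}^{2}\bigr) v^2 + \bigl(\tfrac{T}{6}+\sigma s^*\bigr) v^3 + \tfrac{\sigma}{4} v^4.
\]

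Since $q(0)=0$ and $q'(0)=0$, the factor $v^2$ can be pulled out, giving $q(v) = v^2\, p(v)$ with
\[
p(v) = \tfrac{\sigma}{4} v^2 + \bigl(\tfrac{T}{6}+\sigma s^*\bigr) v + \tfrac{1}{2}\bigl(H+Ts^*+3\sigma {s^*}^{2}\bigr).
\]
Because $v^2\ge 0$, $s^*$ is a global minimizer if and only if $p(v)\ge 0$ for every $v\in\R$; since $\sigma>0$, this is equivalent to the discriminant of $p$ being nonpositive, i.e.\ $\bigl(\tfrac{T}{6}+\sigma s^*\bigr)^2\le \tfrac{\sigma}{2}(H+Ts^*+3\sigma {s^*}^{2})$. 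Expanding and cancelling the common $\sigma T s^*$ and $\sigma^2 {s^*}^{2}$ contributions, this collapses in a few lines exactly to $H+\tfrac{1}{3}Ts^*+\sigma {s^*}^{2}\ge \tfrac{T^2}{18\sigma}$, which is the second inequality in \eqref{opt cond n=1}.

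Assembling the two directions is then immediate. For $(\Rightarrow)$, any global minimizer satisfies the first-order condition, and $q\ge 0$ forces $p\ge 0$ everywhere, whence the discriminant bound. For $(\Leftarrow)$, the two hypotheses imply $p(v)\ge 0$, so $q(v)=v^2 p(v)\ge 0$ for all $v\in\R$, and $s^*$ is global.

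I do not anticipate a real obstacle here: the univariate case is a warm-up in which the general machinery of \Cref{thm: certify SoS Quartically Regularized Polynomial} collapses to a scalar discriminant, formally because the matrix $B(s^*)$ of \eqref{eq:B-sstar} becomes a scalar whose only remaining free parameter $\nu$ enters through $2/(\sigma-\nu)$; taking $\nu\to 0^+$ recovers the sharp constant $T^2/(18\sigma)$. Moreover, by Hilbert's theorem for univariate polynomials, nonnegativity of $p$ coincides with being a sum of squares, so the same characterization simultaneously produces an SoS certificate of $m_3(s)-m_3(s^*)$ valid for every $\sigma>0$, consistent with the univariate entry in \Cref{table:SoS_conditions_summary}.
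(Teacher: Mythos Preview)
Your proof is correct and follows essentially the same route as the paper: both factor out $v^2$ from $q(v)$ and reduce global optimality to nonnegativity of the remaining quadratic cofactor. The paper presents this via an explicit completed-square identity (and, for necessity, evaluates at the point $v=-(2s^*+T/(3\sigma))$ that kills the square), whereas you invoke the discriminant directly; these are equivalent formulations of the same computation, and your closing remark that $\nu\to 0^+$ in \eqref{eq:B-sstar} recovers the sharp constant $T^2/(18\sigma)$ is accurate.
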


\begin{proof}
The proof is provided in Appendix~\ref{appendix proof for  univariate case}. 
\end{proof}

 Hilbert \cite{hilbert1888darstellung} proved that every nonnegative univariate polynomial is SoS. 
Here we provide an alternative derivation of this result using global optimality conditions.

\begin{theorem}
    For $n=1$, if $s^*$ is a global minimizer, then $q(v) =  m_3(s^*+v) - m_3(s^*)$ is SoS. 
    \label{thm: SoS in n=1}
\end{theorem}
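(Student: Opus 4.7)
The plan is to read an SoS certificate directly off the completing-the-square identity already derived inside the proof of \Cref{lemma opt cond n=1}. For $n=1$ that calculation gives
\[
2q(v) \;=\; \tfrac{1}{2}\!\left[H + \tfrac{1}{3}T s^* + \sigma (s^*)^2 - \tfrac{T^2}{18\sigma}\right]\! v^2 \;+\; \tfrac{\sigma}{4}\!\left(v + 2s^* - \tfrac{T}{3\sigma}\right)^{\!2}\! v^2,
\]
so I only need to certify each of the two summands on the right-hand side as a square of a real univariate polynomial in $v$.

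First I would invoke \Cref{lemma opt cond n=1}: since $s^*$ is a global minimizer of $m_3$, the scalar
\[
\alpha \;:=\; H + \tfrac{1}{3}T s^* + \sigma (s^*)^2 - \tfrac{T^2}{18\sigma}
\]
is nonnegative, so the first summand equals $\bigl(\sqrt{\alpha/2}\,v\bigr)^2$, the square of a linear polynomial in $v$. The second summand is already visibly a square, namely $\bigl(\sqrt{\sigma/4}\,(v + 2s^* - \tfrac{T}{3\sigma})\,v\bigr)^2$, the square of a quadratic polynomial in $v$. Dividing through by two and absorbing the factor into the coefficients presents $q(v)$ as a sum of two squares, hence SoS in the sense of \Cref{def SoS polynomial}.

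Because the key algebraic identity and the sign of $\alpha$ are both already established, I do not foresee a substantive obstacle; the only point to double-check is that the necessary condition of \Cref{lemma opt cond n=1} produces $\alpha \ge 0$ rather than merely $\alpha > 0$, so that the degenerate case $\alpha = 0$ collapses the first summand into the trivial square. As a remark, this recovers Hilbert's univariate case within the present setting: nonnegativity of $m_3(s) - m_3(s^*)$ and SoS representability coincide when $n = 1$, with an explicit certificate involving only two squares, of real polynomials of degrees one and two respectively, valid for every $\sigma>0$ and without any lower bound on the regularisation parameter.
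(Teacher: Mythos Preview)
Your argument is correct. Both you and the paper rely on the same optimality condition $\alpha := H + \tfrac{1}{3}Ts^* + \sigma(s^*)^2 - \tfrac{T^2}{18\sigma} \ge 0$ from \Cref{lemma opt cond n=1}, but the routes differ. You read the SoS certificate directly off the completing-the-square identity already established inside the proof of \Cref{lemma opt cond n=1}, exhibiting $q(v)$ explicitly as a sum of two squares. The paper instead writes $m_3(s^*+v)-m_3(s^*)$ in Gram form $\tfrac{1}{2}[1,v,v^2]\,Q_a\,[1,v,v^2]^T$ for an explicit $3\times 3$ matrix $Q_a$, and then verifies $Q_a \succeq 0$ by checking that the nontrivial $2\times 2$ principal minor has nonnegative determinant, again via the inequality in \eqref{opt cond n=1}. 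Your approach is the more economical one, since it simply recycles a computation already done; the paper's Gram-matrix verification instead makes the connection to \Cref{thm SoS alternative} and the SDP viewpoint explicit in the simplest nontrivial case, which serves as a warm-up for the $\Q$-matrix constructions used later in the general $n$ setting.
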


\begin{proof}
The proof is provided in Appendix~\ref{appendix proof for  univariate case}. 
\end{proof}

\begin{remark}
Consider global optimality condition in \cite[Thm 2.1 and 2.2]{zhu2025global}, for $n=1$, \eqref{necessary tight} gives to  
$   
H + \frac{1}{3} T [s^*] +\sigma  \|s^*\|^2    \ge -\frac{1}{3} T [s^*]   + \frac{|T|}{3} \|s^*\| \ge 0
$
and \eqref{sufficient tight}  gives to    
$  
H + \frac{1}{3} T [s^*] +\sigma  \|s^*\|^2    \ge \frac{{\hat{T}}^2}{18\sigma}  -\frac{1}{3} T [s^*]   + \frac{|T|}{3} \ge \frac{{\hat{T}}^2}{18\sigma}.
$
note that \eqref{opt cond n=1} lies between \eqref{necessary tight} and \eqref{sufficient tight}, such that
$$
\eqref{sufficient tight} \Rightarrow \eqref{opt cond n=1} \Rightarrow\eqref{necessary tight}.
$$
\Cref{lemma opt cond n=1} provides a tighter necessary and sufficient condition for global optimality in the $n=1$ case. 
\end{remark}

{Thus, in the univariate setting, the quartically regularized model exhibits SoS exactness
for all $\sigma>0$. The next examples show that this phenomenon persists in certain
higher-dimensional structured cases, but not for arbitrary tensors.}

\subsection{A Structured Tensor Model with SoS Exactness}
\label{sec special T}

{

We next turn to a multivariate setting in which SoS exactness again holds
for every $\sigma>0$. The key structure is that the cubic tensor term has the special form
\[
T[s]^3=(t^Ts)\|s\|^2,
\]
so that the quartically regularized model becomes
\begin{equation}
\label{m3 special T}
m_{ST}(s)
= f_0 + g^T s + \frac{1}{2} H[s]^2
+ \frac{1}{6}\, (t^Ts)\|s\|^2
+ \frac{\sigma}{4}\, \|s\|^4.
\tag{Special T Model}
\end{equation}
This model is of particular interest because it represents a highly structured
low-complexity family of cubic tensors, parameterized by a single vector $t$,
rather than a generic third-order tensor. In this sense, it may be viewed as a
structured low-rank-type tensor model. Such tensor forms also arise naturally in
higher-order optimization and tensor methods. This makes the model relevant not only as a theoretical example, but also as a
tractable surrogate in tensor-based regularization methods and structured third-order models. 
The structural alignment between the cubic term $(t^Ts)\|s\|^2$ and the Euclidean
quartic regularization $\|s\|^4$ is what makes this case special: it allows the
mixed terms to be completed into a single square, and hence removes the need for
a large-regularization threshold.
}

{The next lemma shows that in this structured tensor class, the sufficient and
necessary global optimality conditions coincide. The subsequent theorem then shows
that the shifted polynomial is SoS whenever $s^*$ is a global minimizer.}

\begin{lemma} If for any $s \in \R^n$, $T$ satisfies $T[s]^3 = t^Ts \|s\|^2$ for some $t \in \R^n$ then $s^*$ satisfies
\begin{eqnarray}
  \nabla m_{ST}(s^*) = 0 \qquad \text{and} \qquad
B_T(s^*):= H +  T[s^*] + \sigma \|s^*\|^2 I_n -\frac{1}{3} ({s^*} t^T+t {s^*}^T) - \frac{t t^T}{18  \sigma}   \succeq 0,
\label{opt cond n special T}  
\end{eqnarray}
if and only if $s^*$ is a global minimizer of $m_{ST}$. 
\label{lemma opt cond special T}
\end{lemma}

\begin{proof} 
Let $q(v):=m_{ST}(s^*+v) - m_{ST}(s^*)$. Using the expression of $B_T(s^*)$, we have
$$
\nabla^2 m_{ST}(s^*) = B_T(s^*) +2\sigma s^*  {s^*}^T  + \frac{1}{3}  ({s^*} t^T+t {s^*}^T) + \frac{t t^T}{18  \sigma} I_n.
$$
Substituting into  \eqref{expression m3 using m_*}, we have
\begin{eqnarray*}
2 \big[q(v)- \nabla m_{ST}(s^*)^T  v \big] &=&
 v^T \bigg[ \underbrace{ B_T(s^*) +2\sigma s^*  {s^*}^T  + \frac{1}{3}  ({s^*} t^T+t {s^*}^T) + \frac{t t^T}{18  \sigma} I_n }_{=\nabla^2 m_{ST}(s^*)} +  \frac{1}{3} T[v] + 2 \sigma {s^*}^T  v  + \frac{\sigma}{2} \|v\|^2 I_n\bigg]v 
\\&=&  B_T(s^*)[v]^2 +2\sigma (v^Ts^*)^2 + \frac{2}{3}(v^T {s^*}) (t^T v)  + \frac{(v^T t)^2}{18  \sigma}+  \frac{1}{3} (t^T v)\|v\|^2 + 2 \sigma {s^*}^T  v  \|v\|^2  + \frac{\sigma}{2} \|v\|^4 
\\&=&  B_T(s^*)[v]^2 + \frac{\sigma}{2} \bigg(\|v\|^2+ 2v^Ts^* + \frac{v^Tt}{3\sigma}  \bigg)^2
\\&=&  B_T(s^*)[v]^2 + \frac{\sigma}{2} \left[ \bigg\|v + \big(s^* + \frac{t}{6\sigma}\big)\bigg\|^2- \bigg\|s^* + \frac{t}{6\sigma}\bigg\|^2\right]^2.
\end{eqnarray*}

`$\Rightarrow$'  
Clearly, if \eqref{opt cond n special T} is satisfied, $m_{ST}(s^*+v) - m_{ST}(s^*) \ge 0$ for all $v$. Therefore, $s^*$ is the global minimum.

`$\Leftarrow$' If $s^*$ is a global minimum, then it is a stationary point, therefore $      \nabla m_{ST}(s^*) =0$. 
Let $w := v+ 2s^* + \frac{t}{3\sigma}$. Assume $w^T v=0$, since $s^*$ is the global minimum, we have $m_{ST}(s^*+v) - m_{ST}(s^*)  \ge 0$ for all $v \in \R^n$, thus $ B_T(s^*) [v]^2 \ge 0$ for $v$ orthogonal to $w$. 

Otherwise, assume  $w := v+ 2s^* + \frac{t}{3\sigma}$ and $s_c:=s^* + \frac{t}{6\sigma}$ are not orthogonal, the line $s_c + \tilde{k} w$ intersects the ball centred at the origin of radius $\|s_c\|$  at two points, $s_c$ and $u_*$, with
$ \|u_*\| = \|s_c\|.$ We set $v = - s_c+  u_* = - \big(s^* + \frac{t}{6\sigma}\big) +  u_*$.  Since $s^*$ is the global minimum, we have $m_{ST}(s^*+v) - m_{ST}(s^*)  \ge 0$ for all $v \in \R^n$, we arrive at $ B_T(s^*) [v]^2 \ge 0$ for $v$ not orthogonal to $w$. 
\end{proof}

\begin{theorem}
    For $n \ge 2$, assume for any $s \in \R^n$, $T$ satisfies $T[s]^3 = t^Ts \|s\|^2$ for some $t \in \R^n$. If $s^*$ is the global minimizer, 
then $q(v) =  m_{ST}(s^*+v) - m_{ST}(s^*)$ is SoS. 
\label{thm: SoS in special T}
\end{theorem}
\begin{proof}
 Since $s^*$ is the global minimum, we have $\nabla m_{ST}(s^*) =0$, according to \eqref{expression m3 using m_*}, we can write
\begin{eqnarray}
 m_{ST}(s^*+v)-m_{ST}(s^*)  = \frac{1}{2}
\omega
 \underbrace{\begin{bmatrix} 
 0 &  0  &  0 & 0 &
\\0 &  \nabla^2  m_{ST}(s^*) &  (\sigma s^* + \frac{t}{6}) \mathbf{1}_n^T   & 0
\\0 &  \mathbf{1}_n(\sigma s^* + \frac{t}{6} )^T & \frac{\sigma}{2} \mathbf{1}_n\mathbf{1}_n^T & 0
\\0 &  0 & 0 & 0
 \end{bmatrix}}_{Q_c}
\omega
\label{matrix form Q}
\end{eqnarray}
where $\phi_2$ given in \Cref{phi2}, 
$\mathbf{1} = [1, \dotsc, 1]^T \in \R^n$ and $\mathbf{1}\mathbf{1}^T \in \R^{n \times n}$ is a matrix with all entries equal to 1s. 
Let $\omega \in \R^{\frac{1}{2}(n+2)(n+1)}$ be any vector, we write $\omega  = [1,v, u, z]^T$ as given in \Cref{phi2}. 
For any $\omega$, or equivalently for any $(u, v, z)$, we have
\begin{eqnarray}
2 \omega^T  Q_c(v)\omega   
&=& \nabla^2  m_{ST}(s^*) [v]^2 + 2v^T(\sigma s^* + \frac{t}{6} ) \sum_{i=1}^n u_i  + \frac{\sigma}{2}  \sum_{i=1}^nu_i^2 +  \sigma \sum_{1 \le i < j \le n} u_iu_j 
\notag 
\\  &=& \nabla^2  m_{ST}(s^*) [v]^2 + 2v^T(\sigma s^* + \frac{t}{6} ) e^T u + \frac{\sigma}{2} (e^Tu)^2 
\notag
\\ &=& \underbrace{\nabla^2  m_{ST}(s^*) [v]^2 - 2 \sigma \bigg[ v^T(s^* + \frac{t}{6\sigma} ) \bigg]^2 }_{: \mathcal{J}_1}+ \frac{\sigma}{2} \bigg[\sum_{i=1}^n u_i + 2v^T(s^* + \frac{t}{6\sigma} )  \bigg]^2 . \label{SoS intermid special t}
\end{eqnarray}
Since $s^*$ is the global minimum, \Cref{lemma opt cond special T} proves that \eqref{opt cond n special T} holds. Rearranging \eqref{opt cond n special T} gives $\nabla^2  m_{ST}(s^*) [v]^2  \ge \frac{1}{3} ({s^*} t^T+t {s^*}^T) + \frac{t t^T}{18  \sigma} + 2\sigma s^*  {s^*}^T $ for all $v \in \R^n$. Consequently, 
\begin{eqnarray*}
2 \omega^T  Q_c(v)\omega   \ge \mathcal{J}_1 \ge  v^T \bigg[\frac{1}{3} ({s^*} t^T+t {s^*}^T) + \frac{t t^T}{18  \sigma}  I_n + 2\sigma s^*  {s^*}^T  \bigg] v - 2 \sigma v^T \bigg[(s^* + \frac{t}{6\sigma })  ( s^* + \frac{t}{6\sigma })^T   \bigg] v \ge 0
\end{eqnarray*}
for all $v \in \R^n$. Thus, we obtained that $Q_c \succeq 0$ and therefore $ m_{ST}(s^*+v)- m_{ST}(s^*)$ is SoS. 
\end{proof}

\begin{remark} (Extending to $W$-norm) By changing $I_n$ to $W$-norm with $W \succeq 0$, if the model is bounded below or $W \succ 0$, then \Cref{thm: SoS in special T} and \Cref{lemma opt cond special T} can be extended to allow $T[s]^3 = (t^Ts) (s^TWs)$ by using the change of $\tilde{s} = \hat{D}^{1/2}\hat{L}s$ where $\hat{L}^T\hat{D}\hat{L}=W$.  
\label{extending to W norm}
\end{remark}

{

\paragraph{The quadratic-quartic model as a limiting case.}
The special tensor model reduces to the quadratic-quartic regularization (QQR) model given in \cite{cartis2023second}
in the limiting case $t=0$, i.e., $T=0$,} 
\begin{equation}
\label{QQR Model}
m_{QQR}(s)
= f_0 + g^T s + \frac{1}{2} H[s]^2 
+ \frac{\sigma}{4}\, \|s\|^4.
\tag{QQR Model}
\end{equation}
In this case, the global optimality condition
reduces to
\begin{eqnarray}
\label{qqr opt}
\nabla  m_{QQR}(s^*)=0,
\qquad
H+\sigma\|s^*\|^2I_n \succeq 0,
\end{eqnarray}
and the shifted polynomial is SoS for every $\sigma>0$.  \Cref{lemma opt cond special T} can be viewed as a generalization of the global optimality result in \cite[Theorem 8.2.8]{cartis2022evaluation}, which is recovered by setting $t=0$. 

\begin{remark}
\textbf{A discussion on the bivariate case.}
{When $n=2$, the shifted polynomial $q(v)$ is a nonnegative bivariate quartic,
and is therefore automatically SoS by Hilbert's classical theorem (i.e., every nonnegative bivariate quartic polynomial is SoS \cite{hilbert1888darstellung}.)  
This suggests that the special tensor model above may be viewed as one instance
of a broader structural phenomenon: certain low-complexity cubic terms may admit
exact SoS certificates even without large regularization.}
\end{remark}

\subsection{Beyond Euclidean Regularization: When SoS Exactness Breaks Down}
\label{sec Schnabel Model}

{
{
The previous examples might suggest that nonnegative quartic terms are key to ensuring SoS exactness for $m_3-m_*$. 
However, not all regularization norms are suitable. In particular, the classical Schnabel model (1991) has a  regularization term that is based on a separable quartic norm rather than the Euclidean
norm, and this seemingly minor change fundamentally alters the algebraic structure of the ensuing polynomial.
We will show in this section that one can still derive sufficient SoS-based optimality conditions for the Schnabel model. However, unlike the Euclidean case, these conditions do not become automatically satisfied for large regularization in general. The Schnabel model provides a useful counterexample: it illustrates that the form of the regularization, not merely its degree, is crucial.}}

Schnabel et al. in 1991 \cite{schnabel1991tensor} proposed a formulation for unconstrained optimization based on a fourth-order model of the objective function $f$. The model is given by
\begin{equation}
\tag{Schnabel Model}
m_{\mathcal{S}}(s):= f_0 + g^T s +\frac{1}{2}H[s]^2 + \frac{1}{6} \sum_{j=1}^k (a_j^Ts)^2(b_j^Ts) + \frac{1}{4}\sum_{j=1}^k {\sigma_j}(a_j^Ts)^4 
\label{sch}
\end{equation}
where $\sigma_j >0$, $a_j, b_j \in \R^n$ are specific vectors which correspond to the low–rank approximations of the third and fourth-order derivatives and are built by interpolating function and gradient information from past iterates.  
Schnabel et al. showed that although \eqref{sch} is a fourth-order polynomial in $n$ variables, it can be reduced to the minimization of a fourth-order polynomial in $k$ variables plus a quadratic in $n-k$ variables. Note that the quartic term in \(m_{\mathcal{S}}\) does not necessarily ensure that 
\(m_{\mathcal{S}}(s) \to +\infty\) as \(\|s\|\to\infty\) in all directions. 
Therefore, the Schnabel model may not be bounded below 
and cannot guarantee the existence of a global minimizer of the model function. 
In \cite{schnabel1991tensor, schnabel1984tensor}, Schnabel et al. used this model within a trust-region framework to obtain global convergence of the overall algorithm.

\paragraph{Separable Regularization Norm} 
To ensure that \eqref{sch} is bounded below. Set \(k = n\) and, without loss of generality, assume that \(\sigma_j = \sigma\) and \(a_j = e_j\), where \(e_j\) denotes the unit vector with 1 in the \(j\)-th entry and 0 elsewhere. Note that any other choice of \(\sigma_j, a_j\) can be reduced to this form through an appropriate change of variables.  Let $T = \sum_{j=1}^n e_j \circ e_j \circ b_j$, \eqref{sch} simplifies to
\begin{equation*}
\begin{aligned}
m_{\mathcal{S}}(s) &:= f_0 + g^T s + \frac{1}{2} H[s]^2 + \frac{1}{6} T[s]^3 + \frac{\sigma}{4} \|s\|_4^4, \qquad\text{where}\qquad \|s\|_4 = \big(\sum_{j=1}^n s_j^4 \big)^{1/4},  
\end{aligned}
\end{equation*}
The key difference between this model and the quartically regularized polynomial model \(m_3\) lies in the regularization term: here we employ a \emph{quartically separable norm} \(\|s\|_4^4 = \sum_{j=1}^n s_j^4\), whereas \(m_3\) uses a \emph{quartically regularized Euclidean norm} $\|s\|_2^4 = (s_1^2 + \dotsc s_n^2)^2$. 

{The next result shows that an SoS certificate can still be constructed for the Schnabel model,
but only under an explicit matrix condition. In contrast to the Euclidean case, this condition
does not become automatic for large regularization in general.} \Cref{thm: SoS sch} proves the sufficient conditions for global optimality and its relationship to SoS characterization. \Cref{opt for sch} proves the necessary conditions for global optimality. 
Previously, in \Cref{lemma opt cond special T}, we established tight necessary and sufficient conditions that fully characterize the global minimizer for $m_3$.  
For \eqref{sch}, however, there is an inherent gap between necessary and sufficient 
conditions.



\begin{theorem} (SoS and Global Sufficient Optimality)
    For $n \ge 2$, assume $s^*$ satisfies 
    \begin{eqnarray}
      \nabla  m_{\mathcal{S}}(s^*) = 0 \qquad \text{and} \qquad
 B_{\mathcal{S}}(s^*):=  \nabla^2  m_{\mathcal{S}}(s^*)  - 2 \sum_{i=1}^k \sigma_j \bigg[ (a_ja_j^T) s^*  + \frac{b_j}{6\sigma_j} \bigg]  \bigg[ (a_ja_j^T) s^*  + \frac{b_j}{6\sigma_j} \bigg]^T \succeq 0,
    \label{optimality condition sch}
    \end{eqnarray}
    then $q_{\mathcal{S}}(v) =   m_{\mathcal{S}}(s^*+v) -  m_{\mathcal{S}}(s^*)$ is SoS. Also, if $s^*$ satisfies
\eqref{optimality condition sch}, 
then, $s^*$ is the global minimum. 
\label{thm: SoS sch}
\end{theorem}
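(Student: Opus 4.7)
The proof plan mirrors the decomposition strategy used in the proof of \Cref{thm: SoS in special T}. The aim is to exhibit the identity
\[
q_{\mathcal{S}}(v) \;=\; \tfrac{1}{2}\,v^{T} B_{\mathcal{S}}(s^{*})\,v \;+\; \sum_{j=1}^{k} \tfrac{\sigma_{j}}{4}\Bigl[(a_{j}^{T}v)^{2} + 2(a_{j}^{T}s^{*})(a_{j}^{T}v) + \tfrac{b_{j}^{T}v}{3\sigma_{j}}\Bigr]^{2}.
\]
Once this decomposition is in place the conclusion is automatic: $B_{\mathcal{S}}(s^{*})\succeq 0$ factors as $L^{T}L$, so that $\tfrac{1}{2}v^{T}B_{\mathcal{S}}(s^{*})v = \tfrac{1}{2}\|Lv\|^{2}$ is a sum of squares of linear forms; each of the remaining $k$ summands is explicitly a square of a polynomial in $v$; sums of SoS polynomials are SoS; and $q_{\mathcal{S}}(v)\ge 0$ for every $v\in\mathbb{R}^{n}$ then yields the global optimality of $s^{*}$.

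To derive the decomposition, I would invoke \Cref{technical lemma sch} and use $\nabla m_{\mathcal{S}}(s^{*})=0$ to erase the linear-in-$v$ term. Beyond the quadratic block $\tfrac{1}{2}v^{T}\nabla^{2}m_{\mathcal{S}}(s^{*})v$, what remains is the per-$j$ higher-order sum $\sum_{j}\bigl[\tfrac{\sigma_{j}}{4}(a_{j}^{T}v)^{4} + \sigma_{j}(a_{j}^{T}s^{*})(a_{j}^{T}v)^{3} + \tfrac{1}{6}(a_{j}^{T}v)^{2}(b_{j}^{T}v)\bigr]$. The central algebraic step is a per-index completion of the square: writing $\alpha_{j}:=a_{j}^{T}v$, $\beta_{j}:=b_{j}^{T}v$, $x_{j}^{*}:=a_{j}^{T}s^{*}$, and $u_{j}:=(a_{j}a_{j}^{T})s^{*}+b_{j}/(6\sigma_{j})$ so that $v^{T}u_{j}=x_{j}^{*}\alpha_{j}+\beta_{j}/(6\sigma_{j})$, a short direct expansion verifies the identity
\[
\tfrac{\sigma_{j}}{4}\Bigl[\alpha_{j}^{2}+2x_{j}^{*}\alpha_{j}+\tfrac{\beta_{j}}{3\sigma_{j}}\Bigr]^{2} \;=\; \tfrac{\sigma_{j}}{4}\alpha_{j}^{4}+\sigma_{j}x_{j}^{*}\alpha_{j}^{3}+\tfrac{1}{6}\alpha_{j}^{2}\beta_{j}+\sigma_{j}(v^{T}u_{j})^{2}.
\]
Rearranging, the $j$-th higher-order bracket equals $\tfrac{\sigma_{j}}{4}[\cdots]^{2} - \sigma_{j}(v^{T}u_{j})^{2}$; summing over $j$ and absorbing the rank-one correction $-2\sum_{j}\sigma_{j}u_{j}u_{j}^{T}$ into the Hessian block converts the quadratic part to $\tfrac{1}{2}v^{T}B_{\mathcal{S}}(s^{*})v$, giving the target decomposition. (For this algebra to match, the degree-three-in-$v$ contribution in \Cref{technical lemma sch} must be read as $\sigma_{j}(a_{j}^{T}s^{*})(a_{j}^{T}v)^{3}$ rather than the quadratic-in-$v$ expression $\sigma_{j}(a_{j}^{T}v)^{2}(s^{*T}a_{j})$ currently printed, which appears to be a minor typographic slip stemming from the expansion of $\tfrac{\sigma_{j}}{4}(a_{j}^{T}(s^{*}+v))^{4}$.)

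The main obstacle is identifying the \emph{right} affine polynomial to place inside the square. The naive analogue of the completion in \Cref{thm: SoS in special T} would shift $(a_{j}^{T}v)^{2}$ by $2(a_{j}^{T}s^{*}) + \beta_{j}/(3\sigma_{j})$; the corresponding residual then carries a nonzero constant-in-$v$ term $\sigma_{j}(a_{j}^{T}s^{*})^{2}$ that cannot be absorbed into the Hessian block and does not match the rank-one matrix $u_{j}u_{j}^{T}$ defining $B_{\mathcal{S}}(s^{*})$. The resolution is to multiply the scalar $a_{j}^{T}s^{*}$ by the extra linear factor $a_{j}^{T}v$ before inserting it into the square; this simultaneously generates the otherwise-unaccounted-for cubic term $\sigma_{j}(a_{j}^{T}s^{*})(a_{j}^{T}v)^{3}$ and converts the residual into the purely quadratic-in-$v$ form $\sigma_{j}(v^{T}u_{j})^{2}$ compatible with the rank-one correction in $B_{\mathcal{S}}(s^{*})$. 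Verifying the above expansion with these choices, so that the quartic, cubic, and mixed-quadratic per-$j$ coefficients match exactly, is the heart of the argument.
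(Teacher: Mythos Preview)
Your proposal is correct and follows essentially the same approach as the paper: both arrive at the identity $2q_{\mathcal{S}}(v)=B_{\mathcal{S}}(s^*)[v]^2+\sum_{j}\tfrac{\sigma_j}{2}\bigl[(a_j^Tv)^2+2(a_j^Ts^*)(a_j^Tv)+\tfrac{b_j^Tv}{3\sigma_j}\bigr]^2$ via the same per-index completion of the square, and you have also correctly spotted (and corrected) the missing factor $(a_j^Tv)$ in the cubic term of \Cref{technical lemma sch}, which the paper's own proof silently fixes by writing $\sigma_j(a_j^Tv)^2\,s^{*T}(a_ja_j^T)v$.
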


\begin{proof}
We provide the proof in Appendix~\ref{appendix Proof for Schnabel}. 
\end{proof}

\begin{remark}

 Using the expression of $\nabla^2 m_{\mathcal{S}}$, the alternative formulation of $ B_{\mathcal{S}}(s^*)$ is
\begin{eqnarray}
 B_{\mathcal{S}}(s):=   H + T[s] + \sum_{j=1}^k \sigma_j  (a_j a_j^T) (a_j^Ts)^2  - \sum_{j=1}^k \frac{a_ja_j^T}{3}\bigg[ s b_j^T+b_j {s}^{T} \bigg] - \sum_{j=1}^k \frac{b_jb_j^T}{18 \sigma_j}
  \label{B_s star}
\end{eqnarray}
where  $T = \sum_{j=1}^k a_j \circ a_j  \circ  b_j$. This condition can be viewed as a generalization of~\eqref{opt cond n special T}. From \Cref{thm: SoS in n=1}--\Cref{thm: SoS sch}, we observe that the SoS conditions are derived in more general forms. For \(k = n\) and \(a_j = e_j\), where \(e_j\),      $B_{\mathcal{S}}(s) $ has the expression, 
\begin{eqnarray}
B_{\mathcal{S}}(s) := H + T[s] + \sigma\,\mathrm{diag}\{s_j^2\}_{1 \le j \le n}
- \sum_{j=1}^n \frac{e_j e_j^T}{3} \big( s b_j^T + b_j s^T \big)  - \sum_{j=1}^n \frac{b_j b_j^T}{18\sigma_j}.
\label{B_s star2}
\end{eqnarray}
\end{remark}

\begin{theorem}
(Global Necessary Optimality) Let $\text{Im}(A) := \operatorname{span} \{a_1,\dotsc, a_k\}$ and its orthogonal complement space $\text{Im}(A)^\perp := \operatorname{span} \{a_{k+1},\dotsc, a_n\}$.  {If a global minimizer $s^*$ exists and is attained}, then $\nabla  m_{\mathcal{S}}(s^*) = 0$ and
\begin{eqnarray*}
B_{\mathcal{S}}(s^*)[v]^2 \ge 0  \qquad \text{for all} \qquad v \in \text{Im}(A)^\perp.
\end{eqnarray*}
If $ \{a_1,\dotsc, a_k\}$ are orthogonal vectors, then we also have 
\begin{eqnarray*}
B_{\mathcal{S}}(s^*)[a_j]^2 \ge 0  \qquad \text{for all} \qquad a_j \in \text{Im}(A).
\end{eqnarray*}
\label{opt for sch}
\end{theorem}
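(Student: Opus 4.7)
The first condition, $\nabla m_{\mathcal{S}}(s^*) = 0$, is immediate from the standard first-order necessary condition for unconstrained minimality. The plan for the two remaining claims is to restrict the nonnegativity $q_{\mathcal{S}}(v) := m_{\mathcal{S}}(s^*+v) - m_{\mathcal{S}}(s^*) \ge 0$ to carefully chosen directions and lift the resulting univariate polynomial inequality to a matrix statement.

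For $v \in \text{Im}(A)^\perp$, I would exploit that $a_j^T v = 0$ for every $j$, so every cubic and quartic term in the expansion \eqref{universal difference sch} of $q_{\mathcal{S}}(tv)$ vanishes along the ray $tv$. The rank-one contributions to $\nabla^2 m_{\mathcal{S}}(s^*)$ are proportional to $a_j a_j^T$ and also annihilate $v$, which collapses the expansion to $q_{\mathcal{S}}(tv) = \tfrac{t^2}{2}H[v]^2$. Global minimality of $s^*$ then forces $H[v]^2 \ge 0$. To lift this scalar inequality to the matrix statement $B_{\mathcal{S}}(s^*)[v]^2 \ge 0$, I would pair it with the SoS identity
\begin{equation*}
2q_{\mathcal{S}}(v) = B_{\mathcal{S}}(s^*)[v]^2 + \sum_j \tfrac{\sigma_j}{2}\bigl[(a_j^T v)^2 + 2(a_j^T s^*)(a_j^T v) + \tfrac{b_j^T v}{3\sigma_j}\bigr]^2
\end{equation*}
from the proof of \Cref{thm: SoS sch}, evaluating it along perturbations $v + u$ with $u \in \text{Im}(A)$ chosen so that each bracket $S_j(v+u)$ is simultaneously driven to zero; this isolates $B_{\mathcal{S}}(s^*)$ on the perturbed direction, and a scaling argument in the $v$-component then recovers $B_{\mathcal{S}}(s^*)[v]^2 \ge 0$.

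For the orthogonal case, the decoupling $a_i^T a_j = \delta_{ij}\|a_j\|^2$ simplifies the analysis along $v = a_j$: only the $j$-th term in each sum of \eqref{universal difference sch} survives, so $q_{\mathcal{S}}(ta_j)$ reduces to a univariate quartic $c_2 t^2 + c_3 t^3 + c_4 t^4$ with $c_4 = \tfrac{\sigma_j \|a_j\|^8}{4} > 0$. After factoring $t^2$, nonnegativity on $\mathbb{R}$ is equivalent to $c_4 t^2 + c_3 t + c_2 \ge 0$ for every $t$, hence to the discriminant bound $c_3^2 \le 4 c_2 c_4$. Expanding the three coefficients and matching the resulting scalar inequality against $B_{\mathcal{S}}(s^*)[a_j]^2$ produces the diagonal piece; the missing cross-contributions $(b_i^T a_j)^2/(18\sigma_i)$ for $i \ne j$ are recovered by a two-direction perturbation $v = a_j + \eta a_i$ whose nonnegativity as a quartic polynomial in $\eta$ gives an additional discriminant-type condition that, combined with an $\eta \to 0$ limit, assembles into the claimed matrix inequality at $v = a_j$.

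The principal obstacle is the direction of the implication: \Cref{thm: SoS sch} runs from $B_{\mathcal{S}} \succeq 0$ to $q_{\mathcal{S}} \ge 0$, whereas here the argument must run the other way and only on a subspace. Because the squared brackets in the SoS identity contain a nonvanishing $\tfrac{b_j^T v}{3\sigma_j}$ contribution on $\text{Im}(A)^\perp$, a naive substitution only yields $B_{\mathcal{S}}(s^*)[v]^2 = H[v]^2 - \sum_j \tfrac{(b_j^T v)^2}{18\sigma_j}$, which is a priori weaker than nonnegative. The technical heart of the argument is therefore the construction of the cancelling perturbations $u \in \text{Im}(A)$ (respectively, the cross-direction $\eta a_i$ in the orthogonal case) and the limiting procedure that converts the scalar inequality $q_{\mathcal{S}} \ge 0$ into the desired matrix inequality on the target subspace.
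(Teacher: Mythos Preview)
You have correctly identified the central obstacle: substituting $v \in \text{Im}(A)^\perp$ into \eqref{SoS sch} leaves the residual $\sum_j (b_j^Tv)^2/(18\sigma_j)$, so $q_{\mathcal{S}}(v) \ge 0$ yields only $B_{\mathcal{S}}(s^*)[v]^2 \ge -\sum_j (b_j^Tv)^2/(18\sigma_j)$. The paper's route is shorter: it replaces \eqref{SoS sch} by the rewriting \eqref{sch opt 2}, whose brackets depend on $v$ only through $a_ja_j^Tv$ and hence vanish identically on $\text{Im}(A)^\perp$ (and, in the orthogonal case, for every $i\ne j$ along $v = t a_j$, after which the surviving $j$-th bracket is annihilated by the line--sphere argument familiar from \Cref{lemma opt cond special T}). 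However, \eqref{sch opt 2} is not the same identity as \eqref{SoS sch}: expanding the bracket of \eqref{sch opt 2} produces a $b_j$-contribution of the form $(a_j^Tv)(a_j^Tb_j)/(3\sigma_j)$, filtered through $a_j$, rather than the unfiltered $b_j^Tv/(3\sigma_j)$ appearing in \eqref{SoS sch}. Your direct computation $B_{\mathcal{S}}(s^*)[v]^2 = H[v]^2 - \sum_j (b_j^Tv)^2/(18\sigma_j)$ on $\text{Im}(A)^\perp$ is consistent with \eqref{SoS sch} and inconsistent with \eqref{sch opt 2}, so the paper's shortcut rests on an incorrect identity.

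In fact the first assertion of the theorem is false as stated, so no perturbation or scaling argument can rescue it. Take $n=3$, $k=2$, $a_j = e_j$ for $j=1,2$, $b_1 = e_3$, $b_2 = -e_3$, $\sigma_1=\sigma_2=\sigma$, $g=0$, and $H = \diag(h_1,h_2,\tfrac{1}{18\sigma})$ with $h_1,h_2>0$. Minimising $m_{\mathcal{S}}$ over $s_3$ gives $\tfrac{h_1}{2}s_1^2 + \tfrac{h_2}{2}s_2^2 + \tfrac{\sigma}{2}s_1^2 s_2^2 \ge 0$, so $s^*=0$ is the unique global minimiser; yet $e_3 \in \text{Im}(A)^\perp$ and $B_{\mathcal{S}}(0)[e_3]^2 = \tfrac{1}{18\sigma} - \tfrac{2}{18\sigma} < 0$. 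The gap you flagged is therefore genuine and not closable without an additional hypothesis (for instance $b_j \in \text{Im}(A)$, under which the residual vanishes and your first step already gives the conclusion). The same residual term $\sum_{i\ne j}(b_i^Ta_j)^2/(18\sigma_i)$ obstructs your two-direction scheme for the orthogonal claim: an $\eta a_i$ perturbation reintroduces the $i$-th bracket rather than cancelling the cross-term, so the discriminant/limit argument does not assemble into $B_{\mathcal{S}}(s^*)[a_j]^2 \ge 0$ in general.
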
 
\begin{proof}
We provide the proof in Appendix~\ref{appendix Proof for Schnabel}. 
\end{proof}


A natural question arises: is \(B_{\mathcal{S}}(s^*)\) positive semidefinite when \(\sigma\) is sufficiently large?  
Equivalently, if \(\sigma\) is large enough, can \(m_{\mathcal{S}}(s) - m_{\mathcal{S}}(s^*)\) be represented as a SoS?  
Unfortunately, unlike the case of \(m_3\), the answer is \emph{no}. An illustrative counterexample is provided in \Cref{sec: Ahmadi Example}.  

The key geometric distinction is that Euclidean regularization contributes a uniform 
positive shift $\sigma\|s^*\|^2 I_n$ to $B(s^*)$, whereas separable regularization 
contributes only $\sigma \mathrm{diag}\{s_j^2\}$, whose smallest eigenvalue is controlled 
by $\min_j |s_j|$ rather than $\|s^*\|$---a quantity that cannot be bounded by the 
step-size estimates.

The contrast with the Euclidean model is now clear. In the quartically regularized
cubic model $m_3$, the regularization contributes the uniform positive term
$\sigma\|s^*\|^2 I_n$, which can dominate the indefinite contributions in $B(s^*)$.
In the Schnabel model, by contrast, the regularization contributes only the diagonal term
\[
\sigma\,\mathrm{diag}\{s_j^2\}_{1\le j\le n},
\]
whose smallest eigenvalue depends on the smallest component of $s^*$ rather than on
its full norm. Specifically, in \(m_3\), the regularization term \(\sigma \|s^*\|^2 I_n\) has minimum eigenvalue equals \(\sigma \|s^*\|^2\).  
In contrast, the regularization term \(\sigma\,\mathrm{diag}\{s_j^2\}_{1 \le j \le n}\) has minimum eigenvalue is \(\sigma \min_{1 \le j \le n} \{ s_j^2 \}\).  
The bound $\|s^*\|=O(\|g\|^{1/3})$ therefore no longer suffices to force positive semidefiniteness. The step-size bound \(\|s^*\|\) does not directly control this minimum component \(\min_j |s_j|\), and hence does not guarantee that the regularization term in \(B_{\mathcal{S}}(s^*)\) dominates the remaining terms.  
Consequently, \(B_{\mathcal{S}}(s^*)\) may fail to be PSD.  This explains why SoS exactness can fail even for arbitrarily
large regularization. 
This demonstrates that even a subtle change in the form of the regularization norm can fundamentally alter the SoS properties of the model as \(\sigma \to \infty\).


\paragraph{Difference between the special $T$ model and the Schnabel model}
Under special cases, \eqref{sch} can be transformed into our special $T$ with a $W$ semi-norm ($W \succeq 0$). If $b_1 = b_2 = \dotsc = b_k$ and $\sigma_1 = \sigma_2 = \dotsc = \sigma_k=\frac{3}{2}\sigma$, we can set $ b_1:=\frac{t}{3}$ and  $ \sigma_1:=\sigma$, 
\begin{eqnarray}
\label{special sch case}
m_{\mathcal{S}}(s):= f_0 + g^T s +\frac{1}{2}H[s]^2 + \frac{1}{6} \bigg[s^T\underbrace{\big(\sum_{j=1}^k a_ja_j^T\big)}_{:=W}s\bigg] (t^Ts) + \frac{\sigma}{4} \|s\|_W^4.
\end{eqnarray}
Therefore, we have $T[s]^3 = (t^Ts)(s^TWs)$ and clearly $W \succeq 0$. Under this specific constructions, if the model is bounded below, then $q_{\mathcal{S}}(v)=m_{\mathcal{S}}(s^*+v) - m_{\mathcal{S}}(s^*)$ is SoS (see \Cref{lemma opt cond special T} and \Cref{{extending to W norm}}). 

\begin{remark}
\textit{(Difference between the regularization norm)}  
There is no matrix transform / linear change of basis that converts $ \sum_{j=1}^k s_j^4$ into the Euclidean norm $ \|s\|^4 = \big(\sum_{j=1}^k s_j^2 \big)^2$ for $j \ge 2$. 
More generally, the regularization term in \eqref{sch} takes the form
$
\sum_{j=1}^k (a_j^T s)^4 
= \sum_{j=1}^k (s^T a_j a_j^T s)^2
= \sum_{j=1}^k \|s\|_{W_j}^4, 
\qquad W_j = a_j a_j^T.
$
Here, $\|s\|_{W_j} := \sqrt{s^T W_j s}$ is the matrix-induced quadratic form.  
Since each $W_j$ is rank-one and positive semidefinite, $\|\cdot\|_{W_j}$ is not a norm on the whole space $\mathbb{R}^n$, but only a \emph{seminorm}: it vanishes for any $s \perp a_j$, even when $s \neq 0$.  
Only in the case $k=1$ does the regularization reduce to a single quartic seminorm, i.e.,
\[
(a_1^T s)^4 = \|s\|_{W_1}^4,
\]
which is in a form analogous to the quartic regularization norm in the special $T$ model (i.e., $\|s\|_W^4$ or $\|s\|^4$). The quartic semi-norm does not 
necessarily ensure that \(m_{\mathcal{S}}\) is bounded below. 
For instance, if \(s\in \operatorname{Im}(A)^\perp\) and 
\(H[s]^2 < 0\), then \(m_{\mathcal{S}}(s) \to -\infty\) as 
\(\|s\| \to \infty\). 
 \end{remark}

\begin{remark}
When $k=1$, 
$
m_{\mathcal{S}}(s)
:= 
f_0 + g^T s
+ \frac{1}{2} H [s]^2
+ \frac{1}{6} (a_1^T s)^2 (b_1^T s)
+ \frac{\sigma_1}{4} (a_1^T s)^4. 
$
In this case, the global necessary optimality condition in 
\Cref{opt for sch} becomes
\(\nabla m_{\mathcal{S}}(s^*) = 0\) and
\(B_{\mathcal{S}}(s^*) \succeq 0\).
Hence, if a global minimizer exists, the necessary and sufficient conditions coincide and \(m_{\mathcal{S}}(s) - m_{\mathcal{S}}(s^*)\) is SoS. This corresponds to the special case in 
\eqref{special sch case}, which can be transformed into the 
special \(T\)-model with a weighted quartic semi-norm. 
For $k>1$, however, the presence of multiple distinct seminorms $\|s\|_{W_j}$ prevents such a change of basis and reduction; the equivalence no longer holds.  
\label{remark diff between sch and special T}
\end{remark}

\subsubsection{Euclidean versus separable quartic regularization}
  \label{sec: Ahmadi Example}

{We now illustrate \Cref{opt for sch} and \Cref{thm: SoS sch} with an explicit example, showing that the sufficient 
SoS condition fails for separable quartic regularization even as $\sigma \to \infty$, 
while the Euclidean counterpart remains SoS-exact for all $\sigma > 0$. }
We illustrate using a modified example inspired by Ahmadi et al. in~\cite[Thm.~3.3]{ahmadi2023sums}. Consider a trivariate quartic polynomial,
\begin{equation}
\label{ex counterexample sch}
m_{\mathcal{A}}(s)
=
2\|s\|^2
+ 8(s_1s_2 + s_1s_3 + s_2s_3)
+ \frac{\sigma}{4} (s_1^4 + s_2^4 + s_3^4) = \hat{H}[s]^2 +  \frac{\sigma}{4} \sum_{i=1}^3 s_i^4,
\qquad \sigma > 0.
\end{equation}
where  $\hat{H} = [4,8,8;8,4,8;8,8,4] \in \R^{3 \times 3}$. $\nabla^2 m_{\mathcal{A}}(0) = \hat{H}$ which has eigenvalue $[-4, -4, 20]$. Clearly, $m_{\mathcal{A}}(s)$ is nonconvex for all $\sigma >0$. It is shown in~\cite[Thm.~3.3]{ahmadi2023sums} that, for $\sigma = 4$, 
the shifted polynomial is non-SoS. 
The following Lemma shows that this property persists for all $\sigma > 0$.

\begin{lemma}
For any $\sigma > 0$, let $m_{\mathcal{A}}(s)$ be defined as in~\eqref{ex counterexample sch}. Denote its global minimum value by $m_{\mathcal{A}}(s^*)$. Then, for every $\sigma > 0$, the polynomial \(m_{\mathcal{A}}(s) - m_{\mathcal{A}}(s^*)\) is nonnegative but not a SoS.
\label{lemma not SoS}
\end{lemma}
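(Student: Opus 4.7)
The plan is to reduce the claim, for every $\sigma > 0$, to the single reference polynomial $\tilde{m}_{\mathcal{A}}$ of \eqref{ex counterexample sch 1} (Ahmadi et al.'s non-SoS example) by a dilation, and then to transfer the non-SoS property from $\tilde{m}_{\mathcal{A}}$ itself to its shifted version. First I would establish existence of $s^*$ and nonnegativity: since $\sum_i s_i^4 \ge \|s\|^4/3$ by norm equivalence on $\R^3$, the separable quartic term in \eqref{ex counterexample sch} dominates the quadratic and cross terms as $\|s\|\to\infty$, so $m_{\mathcal{A}}$ is coercive and attains its minimum at some $s^*$, and nonnegativity of $m_{\mathcal{A}}(s) - m_{\mathcal{A}}(s^*)$ is automatic.

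Second, I would carry out the key rescaling. With the substitution $s = \lambda t$ and $\lambda := 2/\sqrt{\sigma}$, the quadratic part of $m_{\mathcal{A}}(\lambda t)$ picks up a factor $\lambda^2$ and the quartic part picks up $\frac{\sigma}{4}\lambda^4 = \lambda^2$, yielding the identity
\begin{equation*}
m_{\mathcal{A}}(\lambda t) \;=\; \lambda^{2}\bigl[\tilde{m}_{\mathcal{A}}(t) - \tfrac{9}{4}\bigr].
\end{equation*}
Taking $t^* := s^*/\lambda$ then gives the global minimizer of $\tilde{m}_{\mathcal{A}}$, and
\begin{equation*}
m_{\mathcal{A}}(s) - m_{\mathcal{A}}(s^*) \;=\; \lambda^{2}\bigl[\tilde{m}_{\mathcal{A}}(s/\lambda) - \tilde{m}_{\mathcal{A}}(t^*)\bigr].
\end{equation*}
Because an invertible linear change of variables and multiplication by a positive constant preserve the SoS property (each candidate square $h_j(t)^2$ becomes $h_j(s/\lambda)^2$, still a polynomial square in $s$), the shifted polynomial $m_{\mathcal{A}}(s) - m_{\mathcal{A}}(s^*)$ is SoS in $s$ if and only if $\tilde{m}_{\mathcal{A}}(t) - \tilde{m}_{\mathcal{A}}(t^*)$ is SoS in $t$. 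This collapses the entire one-parameter family into a single polynomial.

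Finally I would transfer non-SoS from $\tilde{m}_{\mathcal{A}}$ to its shifted version by contradiction. Suppose $\tilde{m}_{\mathcal{A}}(t) - \tilde{m}_{\mathcal{A}}(t^*)$ were SoS. Since $\tilde{m}_{\mathcal{A}}$ is nonnegative by \eqref{ex counterexample sch 1}, the minimum $\tilde{m}_{\mathcal{A}}(t^*) \ge 0$ equals $\bigl(\sqrt{\tilde{m}_{\mathcal{A}}(t^*)}\bigr)^{2}$, a square of a constant. Then
\[
\tilde{m}_{\mathcal{A}}(t) \;=\; \bigl[\tilde{m}_{\mathcal{A}}(t) - \tilde{m}_{\mathcal{A}}(t^*)\bigr] \,+\, \bigl(\sqrt{\tilde{m}_{\mathcal{A}}(t^*)}\bigr)^{2}
\]
would itself be a sum of squares, contradicting Ahmadi et al.~\cite[Thm.~3.3]{ahmadi2023sums} as recalled in \eqref{ex counterexample sch 1}. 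The only non-routine step is the rescaling identity that unifies all $\sigma > 0$; once that is in place, the rest is a two-line deduction from Ahmadi's theorem and requires no new technical input.
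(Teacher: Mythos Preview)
Your proposal is correct and follows essentially the same route as the paper: a dilation $s=\lambda t$ with $\lambda=2/\sqrt{\sigma}$ collapses the one-parameter family onto the single reference polynomial $m_{\mathcal{A},\sigma_0}=\tilde{m}_{\mathcal{A}}-\tfrac{9}{4}$, after which Ahmadi's result \cite[Thm.~3.3]{ahmadi2023sums} and the invariance of the SoS property under invertible linear substitutions and positive scaling finish the argument. Your version is slightly more careful in two places---you explicitly argue coercivity for the existence of $s^*$, and you add the contrapositive step transferring ``$\tilde{m}_{\mathcal{A}}$ not SoS'' to ``$\tilde{m}_{\mathcal{A}}-\tilde{m}_{\mathcal{A}}(t^*)$ not SoS'' via the nonnegative constant $(\sqrt{\tilde{m}_{\mathcal{A}}(t^*)})^2$---whereas the paper simply attributes the shifted statement directly to Ahmadi's theorem; but these are minor elaborations, not a different approach.
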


\begin{proof}
 For the specific case of $\sigma = 4$, we denote
$$
m_{\mathcal{A}, \sigma_0}(s) :=
2\|s\|^2
+ 8(s_1s_2 + s_1s_3 + s_2s_3) + (s_1^4 + s_2^4 + s_3^4)
= \hat{H}[s]^2 + \sum_{i=1}^3 s_i^4.
$$
Note that $m_{\mathcal{A}, \sigma_0}(s)$ is bounded below and possesses six global minimizers\footnote{Numerically, we obtain \(m^*_{(\sigma_0)} = -2.1443\), and $ s^*_{(\sigma_0)} = \pm(-1.1498,\, 0.6674,\, 0.6674)$ and their permutations.}. We denote the global minimizers and global minimum as $(s^*_{(\sigma_0)}, m^*_{(\sigma_0)})$.  
\cite[Thm 3.3]{ahmadi2023sums} proves  that, the shifted polynomial \(m_{\mathcal{A}, \sigma_0}(s) - m^*_{(\sigma_0)}\) is not SoS in \(s\). 
We prove that this result extends to all $\sigma > 0$.  
Take any fixed $\sigma > 0$, consider the change of variables $\tilde{s} = \frac{\sigma^{1/2}}{2} s$.  
Then, from~\eqref{ex counterexample sch}, we have
\begin{eqnarray*}
m_{\mathcal{A}}(s)
&=& \hat{H}[s]^2 + \frac{\sigma}{4}\sum_{i=1}^3 s_i^4 
=  4\sigma^{-1} 
\Bigg[\hat{H}\big[\frac{\sigma^{1/2}}{2}{s}\big]^2 + \sum_{i=1}^3 \big(\frac{\sigma^{1/2}}{2}{s}_i\big)^4 \Bigg]
= 4\sigma^{-1} m_{\mathcal{A}, \sigma_0}(\tilde{s}).
\end{eqnarray*}
By this scaling, \(m_{\mathcal{A}}(s)\) attains the global minimizers and global minimum at \((2\sigma^{-1/2}s^*_{(\sigma_0)}, 4\sigma^{-1}m^*_{(\sigma_0)})\).
As scaling preserve non-SoSness,  given \(m_{\mathcal{A}, \sigma_0}(\tilde{s}) - m^*_{(\sigma_0)}\) is not SoS, we deduce that
$
4\sigma^{-1}\big[m_{\mathcal{A}, \sigma_0}(\tilde{s}) - m^*_{(\sigma_0)}\big]
$
is also not SoS in \(s\). Consequently, \(m_{\mathcal{A}}(s) - m_{\mathcal{A}}(s^*) = m_{\mathcal{A}}(s) - 4\sigma^{-1}m^*_{(\sigma_0)}\) is not SoS for any $\sigma > 0$.
\end{proof}

\noindent
\Cref{lemma not SoS} can be verified numerically. 
Using an SoS relaxation (e.g., \texttt{SoSTOOLS}) for any numerically permissible $\sigma >0$, the moment-SDP solver returns a lower bound but not a global minimum, indicating that the SoS relaxation \emph{fails} to recover the true optimum\footnote{For very large $\sigma$ (e.g., $\sigma \ge 2500$),
the solver reports a global minimum close to the origin,
but this is due to numerical errors, not because the shifted polynomial becomes SoS.}.

\textbf{Connection to the Schnabel model.} 
For any $\sigma>0$, the polynomial \eqref{ex counterexample sch} can be written in the form~\eqref{sch} by choosing
$
a_j = e_j, b_j = 0, 
$
for $j = 1,2,3$ $g = 0$, and $H = \hat{H}$. Using  $s^* = 2\sigma^{-1/2}s^*_{(\sigma_0)}$ and 
\(s^*_{(\sigma_0)} = [-1.1498,\, 0.6674,\, 0.6674]^T\). We derive from \eqref{B_s star}, $B_{\mathcal{S}}(s^*)$ for $m_{\mathcal{A}}$ has the expression
\begin{eqnarray}
B_{\mathcal{S}}(s^*)
= H +  \diag ({s^*_{(\sigma_0)}}^2)
= \begin{bmatrix}
 9.2878 & 8 & 8 \\
8 & 5.7819 & 8 \\
8 & 8 &  5.7815
\end{bmatrix}
\label{bs example}
\end{eqnarray}
for any $\sigma>0$.  $B_{\mathcal{S}}(s^*)$  has eigenvalues
\([-2.2,  0,  23.1]\)  and is therefore indefinite for all $\sigma>0$. 
Thus, the sufficient SoS condition in \Cref{thm: SoS sch} fails.
Yet, we verify that $s^*$, the necessary global condition in \Cref{opt for sch}
is satisfied: 
$
B_{\mathcal{S}}(s^*)[e_j]^2 \ge 0, j=1,2,3.
$

\textbf{Replacing the separable quartic norm.} {By contrast, replacing the separable quartic term by the Euclidean quartic norm fully changes the picture.}
Let us now consider the Euclidean quartic regularization, 
\begin{equation}
\label{example quartic euclidean norm}
m_{\mathcal{E}}(s)
=
 2\|s\|^2 
+ 8(s_1s_2+s_1s_3+s_2s_3)+ 
\sigma \|s\|^4= \hat{H}[s]^2 + \sigma \|s\|^4
\qquad \sigma \ge 0.
\end{equation}
\(m_{\mathcal{E}}(s)\) is a nonconvex quadratic quartically regularized (QQR) polynomial.  
By \eqref{QQR Model}, we know that \(m_{\mathcal{E}}(s) - m_{\mathcal{E}}(s^*)\) is SoS for every \(\sigma \ge 0\).  
In \Cref{thm SoS for example}, we also derived the explicit expression for its global minimizers.

\begin{lemma}
\label{thm SoS for example}
{Let \(s^* \in \mathbb{R}^3\) satisfy 
$
s_1^* + s_2^* + s_3^* = 0,
\|s^*\| = \sigma^{-1/2}.
$
Then \(s^*\) is a global minimizer of \(m_{\mathcal{E}}\), and its minimum value is 
$m_{\mathcal{E}}(s^*) = -\sigma^{-1}.
$}
\end{lemma}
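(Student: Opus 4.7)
The plan is to reduce this trivariate quartic minimization to a one-variable problem by finding the right invariants. Introduce the auxiliary quantities $u := \|s\|^2 \ge 0$ and $r := s_1 + s_2 + s_3$. The elementary identity
\begin{equation*}
r^2 = u + 2(s_1 s_2 + s_1 s_3 + s_2 s_3)
\end{equation*}
lets me eliminate the cross terms: substituting $s_1 s_2 + s_1 s_3 + s_2 s_3 = (r^2 - u)/2$ into the defining formula for $m_{\mathcal{E}}$ yields
\begin{equation*}
m_{\mathcal{E}}(s) = 2u + 4(r^2 - u) + \sigma u^2 = -2u + 4r^2 + \sigma u^2.
\end{equation*}

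Next, since $4r^2 \ge 0$, I obtain the lower bound $m_{\mathcal{E}}(s) \ge \sigma u^2 - 2u$ for every $s \in \mathbb{R}^3$. The univariate quadratic $\phi(u) := \sigma u^2 - 2u$ on $[0,\infty)$ attains its minimum at $u = 1/\sigma$ with value $\phi(1/\sigma) = -1/\sigma$. Hence $m_{\mathcal{E}}(s) \ge -\sigma^{-1}$ for all $s \in \mathbb{R}^3$.

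Finally, I would track the equality conditions: the bound $4r^2 \ge 0$ is tight iff $r = s_1 + s_2 + s_3 = 0$, and the bound $\phi(u) \ge \phi(1/\sigma)$ is tight iff $u = \|s\|^2 = 1/\sigma$. Both conditions are simultaneously satisfiable (the plane $\{s : s_1 + s_2 + s_3 = 0\}$ intersects the sphere of radius $\sigma^{-1/2}$ in a nontrivial circle), so the infimum $-\sigma^{-1}$ is attained exactly on the set described in the statement. This proves both that every such $s^*$ is a global minimizer and that $m_{\mathcal{E}}(s^*) = -\sigma^{-1}$.

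There is no serious obstacle here; the only conceptual step is spotting that $r = s_1 + s_2 + s_3$ is the natural symmetric coordinate decoupling the cross term from the Euclidean norm. As a sanity check, one could alternatively invoke \Cref{lemma opt cond T=0} (since $m_{\mathcal{E}}$ is of QQR type with $T=0$, $g=0$), verify $\nabla m_{\mathcal{E}}(s^*)=0$ on the prescribed set, and check that $\hat{H} + 4\sigma\|s^*\|^2 I_3 \succeq 0$ since its eigenvalues become $-4 + 4 = 0$ (double) and $20 + 4 = 24$; this confirms global optimality without needing the reduction, but the direct argument above also delivers the explicit minimum value $-\sigma^{-1}$ in one line.
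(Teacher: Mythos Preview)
Your proof is correct and essentially identical to the paper's: the paper writes the single identity $m_{\mathcal{E}}(s)+\sigma^{-1}=\sigma(\|s\|^2-\sigma^{-1})^2+4(s_1+s_2+s_3)^2$, which is exactly your expression $m_{\mathcal{E}}(s)=\sigma u^2-2u+4r^2$ after completing the square in $u$. Your additional sanity check via the QQR optimality condition is a nice touch (and matches the paper's computation of $B_{QQR}$ elsewhere), but it is not part of the paper's proof of this lemma.
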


\begin{proof}
$
m_{\mathcal{E}}(s) + \sigma 
=
\sigma\big(\|s\|^2 - \sigma^{-1}\big)^2 
+ 4(s_1+s_2+s_3)^2,
$
which is nonnegative and SoS for all \(s\).
The lower bound \(-\sigma^{-1}\) is attained precisely when
\(s_1+s_2+s_3=0\) and \(\|s\|=\sigma^{-1/2}\).
\end{proof}

\noindent
{\Cref{thm SoS for example} shows that \(m_{\mathcal{E}}\) has infinitely many global minimizers: every point on the circle
$
\{ s \in \mathbb{R}^3 : s_1 + s_2 + s_3 = 0,\ \|s\| = \sigma^{-1/2} \}.
$}
Although \(m_{\mathcal{E}}\) is SoS, the moment relaxations numerical solvers 
(e.g., \texttt{yalmip}) return a rank-deficient
moment matrix. This is because 
\eqref{example quartic euclidean norm} has infinitely many
global minimizers—the rank condition for finite relaxation cannot be
satisfied. From \eqref{qqr opt}, we have
\begin{eqnarray}
B_{QQR} ({s^*}^2)=
H +  \sigma \diag ({s^*}^2)
=
\begin{bmatrix}
4+ \sigma \|s^*\|^2 & 8 & 8 \\
8 & 4+ \sigma \|s^*\|^2 & 8 \\
8 & 8 & 4+ \sigma  \|s^*\|^2
\end{bmatrix} = \begin{bmatrix}
8 & 8 & 8 \\
8 & 8 & 8 \\
8 & 8 &  8
\end{bmatrix}.
\label{bqqr example}
\end{eqnarray}
$B_{QQR} ({s^*}^2)$ has eigenvalue $0, 8, 24$ and $B_{QQR} ({s^*}^2) \succeq 0$ for all $\sigma >0$.  
The subtle difference between \eqref{bs example} and \eqref{bqqr example}
\(B_{\mathcal{S}}(s^*)\), the separable quartic norm does not guarantee
positive definiteness, whereas the Euclidean quartic norm does.  {The contrast between \eqref{bs example} and \eqref{bqqr example} shows the difference in the mechanism. Under separable quartic regularization, the regularization contributes only
coordinatewise curvature, which may still leave the associated matrix indefinite. Under
Euclidean quartic regularization, by contrast, the curvature is distributed uniformly
through the term $\sigma\|s^*\|^2I_n$, which is strong enough to enforce positive
semidefiniteness in this example.} 

\textbf{Robustness: the effect of perturbations on global minimizers and on SoS representability.} We introduce a small perturbation to
\eqref{example quartic euclidean norm}, either by perturbing a single coefficient,
\begin{equation*}
\hat{m}_{\mathcal{E}, 1}(s)
=
 \frac{\sigma}{4}\|s\|^4
+ 2\|s\|^2
+ 8\Big[(1+10^{-5})s_1s_2 + s_1s_3 + s_2s_3 \Big].
\end{equation*}
Alternatively, we can add small random perturbations to several entries
\begin{align*}
\hat{m}_{\mathcal{E}, 2}(s)
=
\sigma \|s\|^4
+ 2\|s\|^2 
 + 8\Big[(1+10^{-6}\,\xi_{12})s_1s_2
+ (1+10^{-6}\,\xi_{13})s_1s_3
+ (1+10^{-6}\,\xi_{23})s_2s_3 \Big],
\end{align*}
where each \(\xi_{ij}\sim \mathcal{N}(0,1)\) represents an independent Gaussian random variable. 
When we apply the moment-relaxation solver \texttt{yalmip} to both perturbed polynomials, it returns a successful status of \texttt{1}, indicating that the ring of infinite global minimizers breaks—after perturbation—into one or at most two global minimizers. Consequently, the rank condition is satisfied. 
Small perturbations in the data (i.e., in the coefficients) can change the numerical outcome returned by \texttt{yalmip}.  
This behaviour observed arises due to the lack of robustness of this specific example. More details can be found in Appendix \ref{appendix Global Minima for m perturb}. 

\textbf{Perturbing the cubic term.}
We also consider perturbing the cubic term \eqref{ex counterexample sch}, \[
m_{\mathcal{A},2}(s)
=
2\|s\|^2
+ 8(s_1 s_2 + s_1 s_3 + s_2 s_3)
+ 0.1(s_1^3 + s_2^3 + s_3^3)
+ \sigma (s_1^4 + s_2^4 + s_3^4)
\]
for $\sigma =4$. 
This polynomial attains its global minimum at
$
s^* = [-1.1722,\ 0.6700,\ 0.6700]^T, 
m_{\mathcal{A},2}(s^*) = -2.2409.
$
Using \texttt{SoSTOOLS}, we obtain a lower bound
\(\gamma^* = -2.3990\),
which is strictly less than the true global minimum, implying that
\(m_{\mathcal{A},2}(s) - \gamma^*\) is still not SoS.
In contrast, if we replace the separable quartic term by the Euclidean quartic term, we obtain the variant
\[
m_{\mathcal{E},2}(s)
=
2\|s\|^2
+ 8(s_1 s_2 + s_1 s_3 + s_2 s_3)
+ 0.1(s_1^3 + s_2^3 + s_3^3)
+ \sigma \|s\|^4. 
\]
Using \texttt{SoSTOOLS}, we verify that
\(m_{\mathcal{E},2}(s) - m_{\mathcal{E},2}(s^*)\) is SoS
for the tested values of \(\sigma \gtrsim 10\), not not SoS for all $\sigma > 0$. This observation also verifies \Cref{thm B psd} and the result for \eqref{QQR Model}.  
Without the cubic term, \(m_{\mathcal{E}}(s)\) has a QQR structure and is therefore SoS-exact with zero layer of relaxation for all \(\sigma > 0\). 
After adding a small cubic perturbation, the polynomial \(m_{\mathcal{E},2}(s)\) is no longer a quadratic–quartically regularized model but instead becomes a cubic–quartically regularized polynomial.  
In this case, the bound for \(\sigma\) follows the requirements of \Cref{thm B psd}: \(\sigma\) must be sufficiently large to ensure that \(B(s)\) is PSD and thus guarantee an SoS representation.  
Numerically, we observe that the threshold occurs around \(\sigma \gtrsim 10\).

{These examples show that the importance of the geometry of the quartic term: Euclidean quartic regularization can create the matrix positivity needed for SoS exactness, whereas separable quartic regularization may fail to do so even for arbitrarily large $\sigma$.}


To conclude the section, some structural classes---including the univariate case, the quadratic-quartic model,
and the special tensor model---admit SoS exactness for every $\sigma>0$, so the
large-regularization threshold is not intrinsic there. By contrast, the Schnabel model
shows that this behavior is not universal: replacing the Euclidean quartic regularization
by a separable quartic regularization can destroy the mechanism that makes the shifted
polynomial SoS. This reinforces the central message of the paper that the geometry of the
regularization is as important as its degree.

\section{Interpretation and Implications of the SoS Exactness Theory}
\label{sec examples}

{The purpose of this section is to interpret these results from a broader optimization perspective. We first explain why they are different and compare them to the classical NP-hardness result by Nesterov~\cite{nesterov2003random}. The key point is that these results arise in different settings. In this sense, our framework identifies a tractable regime in which nonnegative quartic polynomials admit exact SoS representations. We then discuss the implications of our theory for higher-order regularization methods, where quartically regularized cubic models arise naturally, and briefly illustrate this perspective on an example in sensor localization. }

\subsection{Connections to Nesterov NP-hardness Results}
\label{sec: Nesterov Example}

{
Nesterov~\cite{nesterov2003random} proved that classical NP-hardness results for cubic and quartic polynomial optimization ver the Euclidean ball. These results, however, do not contradict the present theory, because they arise in a different optimization setting. In particular,
they concern constrained formulations or Lagrangian reformulations in which the quartic
term does not play the same structural role as the Euclidean quartic regularization used here.}

\begin{theorem}[\cite{nesterov2003random, luo2010semidefinite}]
The following polynomial optimization problems are NP-hard.  
Let \(A_j \in \mathbb{R}^{n \times n}\) be positive semidefinite matrices.

\begin{enumerate} \setlength{\itemindent}{0pt}
    \item \textbf{Quartic maximization over the Euclidean ball}:
    \begin{equation}
        \max_{s \in \mathbb{R}^n} 
        \bigg\{
        f_{\mathcal N}(s)
        := \sum_{j=1}^{k} \left(A_j[s]^2\right)^2 
        \;:\;
        \|s\| = 1
     \bigg\}.
        \label{Nesterov np hard 1}
    \end{equation}

    \item \textbf{Cubic maximization over the Euclidean ball}:  
    where \(g_3(s)\) is a homogeneous cubic polynomial,
    \begin{equation}
        \max_{s \in \mathbb{R}^{n}} 
        \left\{
          g_3(s)
          \;:\;
          \|s\| = 1
        \right\}.
        \label{Nesterov np hard 2}
    \end{equation}

    \item \textbf{Quartic minimization under quadratic constraints}:  
    where \(g_4(s)\) is a homogeneous quartic polynomial,
    \begin{equation}
        \min_{s \in \mathbb{R}^n}
        \left\{
        g_4(s)
        \;:\;
        A_j[s]^2 \ge 1,\quad j = 1,\dots,k
        \right\}.
        \label{Nesterov np hard 3}
    \end{equation}
    Note that this formulation is an alternative representation of
    \eqref{Nesterov np hard 1}.
\end{enumerate}
\end{theorem}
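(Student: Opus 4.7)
The plan is to establish NP-hardness of each of \eqref{Nesterov np hard 1}--\eqref{Nesterov np hard 3} by polynomial-time reductions from classical NP-hard problems, following the approach of Nesterov \cite{nesterov2003random} and Luo--Zhang \cite{luo2010semidefinite}. The natural source problems are matrix copositivity, the partition problem, and the maximum stable set problem (via Motzkin--Straus), each of which can be phrased as a quadratic optimization over the standard simplex or over the discrete cube $\{-1,+1\}^n$.

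For \eqref{Nesterov np hard 1}, I would use the change of variables $x_i = s_i^2$, which maps the probability simplex $\{x \ge 0,\ \sum_i x_i = 1\}$ into the nonnegative portion of the Euclidean sphere $\{\|s\|=1\}$. Under this map, a quadratic form $x^T M x$ lifts to a homogeneous quartic form in $s$, so NP-hardness of maximizing $x^T M x$ over the simplex (equivalently, copositivity testing) transfers. The remaining task is to express the lifted quartic in the prescribed form $\sum_j (A_j[s]^2)^2$ with $A_j \succeq 0$: after adding a sufficiently large multiple of $\|s\|^4 = (I[s]^2)^2$ the lifted quartic becomes a sum of squares of PSD quadratic forms and can then be factored into the required shape. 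Since $\|s\|^4 = 1$ on the sphere, this shift modifies the optimal value only by a known constant and therefore preserves the reduction.

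For \eqref{Nesterov np hard 2}, the homogeneity of degree three precludes the substitution $x_i = s_i^2$ directly. Instead, I would introduce an auxiliary coordinate $s_0$ and encode an NP-hard quadratic problem via a cubic form such as $s_0 \cdot s^T M s$ on the sphere $s_0^2 + \|s\|^2 = 1$; a case analysis at optimality, exploiting the fact that at the maximum $s_0$ must balance against $\|s\|$ to leading order, recovers $\max_{\|s\|=1} s^T M s$ after normalization, which is itself NP-hard through the copositivity reduction. For \eqref{Nesterov np hard 3}, I would argue equivalence with \eqref{Nesterov np hard 1} by a scaling/Lagrangian duality argument: for a homogeneous quartic $g_4$, the quadratic inequalities $A_j[s]^2 \ge 1$ act as rescaling constraints fixing the scale of $s$, and dualizing them reproduces the sphere-constrained problem up to an affine transformation of the optimal value.

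The main obstacle in all three reductions is enforcing, simultaneously, (i) polynomial-time computability, (ii) the prescribed structural restrictions ($A_j \succeq 0$ in \eqref{Nesterov np hard 1}, homogeneity of $g_3$ and $g_4$ in \eqref{Nesterov np hard 2}--\eqref{Nesterov np hard 3}, and the specific quadratic shape $A_j[s]^2 \ge 1$ of the constraints in \eqref{Nesterov np hard 3}), and (iii) an \emph{exact} correspondence of optimal values, so that a polynomial-time oracle for the target problem would decide the NP-hard source problem without a gap. The cubic case \eqref{Nesterov np hard 2} is the most delicate, since the degree-raising trick introduces a spurious auxiliary variable $s_0$ whose optimal behaviour must be pinned down through the sphere constraint in order to avoid losing the reduction; this is where the careful use of the homogenization argument by Nesterov is essential.
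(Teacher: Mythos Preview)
The paper does not supply its own proof of this theorem: it is stated purely as a citation of \cite{nesterov2003random} and \cite{luo2010semidefinite}, both in the introduction (Theorem~\ref{thm Nesterov np hard}) and again at the start of Section~\ref{sec: Nesterov Example}, with no accompanying argument. There is therefore nothing in the paper to compare your proposal against.

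Your outline is broadly in the spirit of the original sources---reductions from copositivity/simplex-quadratic problems via the substitution $x_i=s_i^2$, and homogenization tricks for the cubic case---and is a reasonable sketch of how such proofs go. That said, since the paper treats this as a quoted background result, producing a full proof here goes beyond what the paper itself does; a simple pointer to the cited references would match the paper's treatment.
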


We first study the quartic problem \eqref{Nesterov np hard 1}.  
Since \(f_{\mathcal N}\) is 4-homogeneous, maximizing over the unit ball or
over the sphere is equivalent:
$
\max_{\|s\| \le 1} f_{\mathcal N}(s)
=
\max_{\|s\| = 1} f_{\mathcal N}(s).
$
A Lagrangian reformulation is given by
\begin{equation}
\label{eq:nesterov-quartic LM}
\mathcal{L}_1(s, \lambda)
= \sum_{j=1}^{m} \left(A_j[s]^2\right)^2 + \lambda(\|s\|^2 - 1).
\end{equation}
The quartic polynomial \(\mathcal L(s,\lambda)\) is bounded below with respect to \(s\), but not with respect to $\lambda$. The Lagrangian formulation \eqref{eq:nesterov-quartic LM} is fundamentally different from our quartically regularized polynomial model in the following ways. {The key reason is that the Lagrangian formulations and the quartically regularized model
have fundamentally different geometry.}

\begin{remark}
Several key distinctions explain why quartic maximization over the Euclidean
ball is NP-hard, whereas sufficiently quartically regularized minimization is tractable. 
\begin{enumerate} \setlength{\itemindent}{0pt}
    \item \textbf{Unbounded in the Lagrange multiplier.}  
    The variable pair is \((s, \lambda)\).  
    While \(\mathcal L(s,\lambda)\) is quartic in \(s\), it contains \emph{no}
    regularization term in~\(\lambda\), hence
    \(\mathcal L(s,\lambda)\) is unbounded in~\(\lambda\).

    \item \textbf{No freedom to choose a regularization parameter.}  
    In quartic regularization, the coefficient \(\sigma > 0\) is a user-chosen
    stabilizing parameter.  
    In contrast, here \(\lambda^*\) is an \emph{optimization variable}
    determined by the KKT system, not a tunable regularization parameter.

    \item \textbf{Constraint active at global solutions.}  
    Any KKT point \((s^*,\lambda^*)\) corresponding to a global maximizer
    of \eqref{eq:nesterov-quartic LM} must satisfy the boundary constraint
    \(\|s^*\| = 1\), implying \(\lambda^* \neq 0\).  
    Thus, the constraint term
    \(\lambda(\|s\|^2 - 1)\) \emph{cannot} be ignored in the global analysis.

    \item \textbf{Degeneracy when $\|g\|=0$.}  
Consider the Lagrangian as a function of the joint variable $(s,\lambda)$.  
Apart from the quartic terms in $s$, $\mathcal{L}(s,\lambda)$ contains only a homogeneous cubic term and a constant term, and no first order coefficient $\|g\|=0$.  
As discussed in \Cref{remark what if g=0}, the case where the gradient vanishes is structurally special and requires careful, case-by-case analysis.
\end{enumerate}
    \label{remark why np hard}
\end{remark}

Similarly, for the cubic maximization over the Euclidean ball in \eqref{Nesterov np hard 2}, we can write the corresponding Lagrange multiplier formulation
\begin{equation}
\label{eq:nesterov-cubic LM}
\mathcal{L}_2(s, \lambda) = g_3(s) + \lambda(\|s\|^2 - 1).
\end{equation}
By writing the complementarity condition in quartic form, 
$\mathcal{L}(s,\lambda)$ becomes a quartic polynomial in $s$.  However, the four key differences highlighted in \Cref{remark why np hard} also apply here:  
$\mathcal{L}(s,\lambda)$ is unbounded in $\lambda$ (if $\lambda \rightarrow -\infty$ 
and $\|s\|<1$, then $\mathcal{L}(s,\lambda)\rightarrow -\infty$);  
there is no freedom to choose a regularization parameter (since 
$\lambda^*$ is an optimization variable, not a regularization weight);  
and the constraint is always active at a global maximizer because a cubic polynomial is unbounded; lastly,  besides regularization, we have a homogeneous cubic term with $\|g\| = 0$. 
However, if we modify \eqref{eq:nesterov-cubic LM} as
\begin{equation}
\label{cubic LM 2}
\min_{s \in \R^n} \mathcal{L}_{c,2}(s) = g_3(s) + \sigma(\|s\|^4 - 1),
\end{equation}
where $\sigma > 0$ is a regularization parameter that can be chosen a priori, 
then $\mathcal{L}_{c,1}(s)$ becomes a quartically regularized polynomial and is bounded below. 
{
Note that the modification is not equivalent to Nesterov's constrained problem.
Rather, it is intended only as an illustration of how introducing Euclidean quartic
regularization changes the structure of the polynomial.}

{Figure~\ref{fig change of SoS on sigma} illustrates a typical transition:
as $\sigma$ increases, the modified quartically regularized model becomes SoS-exact,
in agreement with the theory developed in Section~\ref{sec: main theory}.} We plot the behaviour of the global minimizer $\|s^*\|$ as a function of the regularization parameter $\sigma$. For $\sigma$ increasing from $0.1$ to $100$, we test whether 
$\mathcal{L}_c(s) - \mathcal{L}_c(s^*)$ is a sum-of-squares using 
\texttt{GloptiPoly}/\texttt{MSDP}.  
Non-SoS cases are marked with red crosses, while SoS cases  are plotted by blue circles with the corresponding norm $\|s^*\|$  against the regularization weight~$\sigma$.  
The right panel also includes the curve $O(\sigma^{-1/3})$ for comparison which is the theoretical bound of $\|s^*\|$   given in \Cref{thm bound for $s^*$}. 
We observe that $\mathcal{L}_{c, 2}(s) - \gamma$ becomes SoS exact with zero layer of relaxation as $\sigma$ increases. The verifies our theoretical result in \Cref{thm B psd} that sufficiently regularized nonnegative quartic polynomials are SoS. 

\begin{figure}[t]
    \centering
    \includegraphics[width=0.8\linewidth]{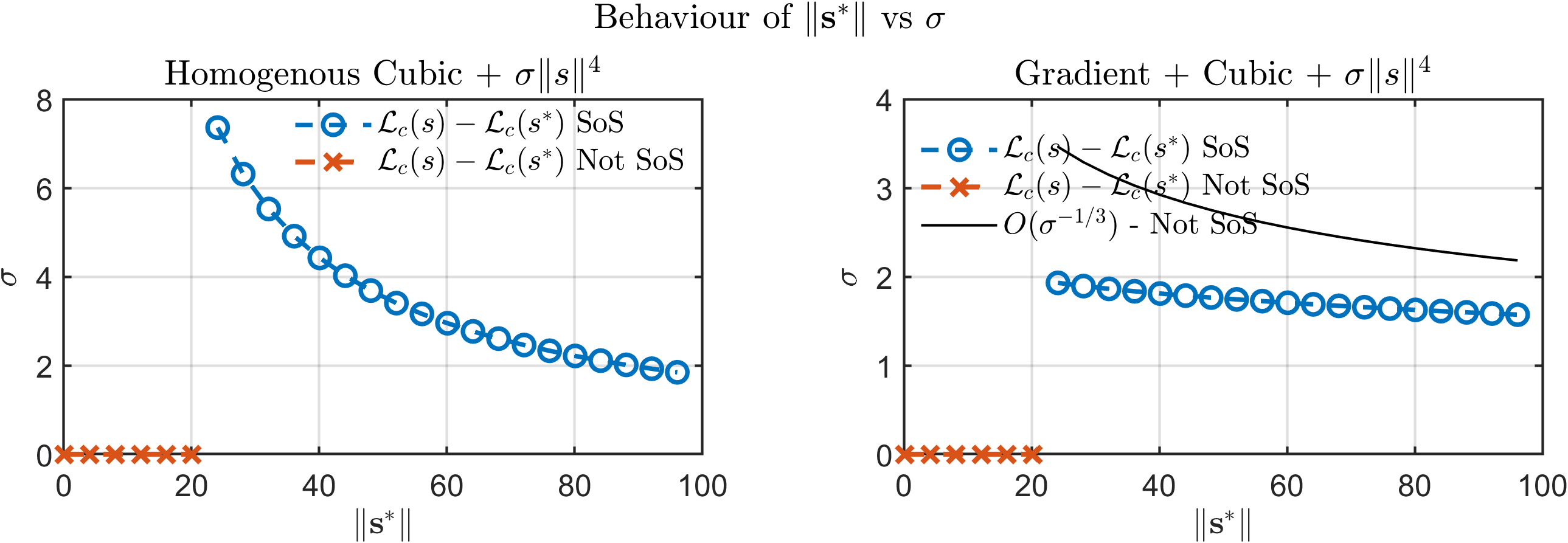}
\caption{\small
\textbf{Left:} Homogeneous cubic model 
$g_3(s) + \sigma\|s\|^4$ with random \emph{negative} coefficients 
uniformly generated in $[0,-100]$ and zero gradient term.  
\textbf{Right:} Non-homogeneous model 
$g(s) = g^Ts+ g_3(s)  + \sigma\|s\|^4$, where both the cubic and linear
coefficients are randomly generated in $[0,-100]$.  
}
\label{fig change of SoS on sigma}
\end{figure}

\begin{remark}
    Note that assume $A_j \succ 0$, a similar modification can be written for \eqref{eq:nesterov-quartic LM} with 
\begin{equation}
\label{cubic LM 1}
\min_{\s \in \R^n} \mathcal{L}_{1, c}(\s, \lambda)
=  \sigma \|\s\|^4 + (\s^T A_j^{-1} \s - 1).
\end{equation}
where we use the change of variables $\s^T \s = A_j[s]^2$. We observe same numerical results with $ \mathcal{L}_{1, c}$.
\end{remark}

It is important to emphasize that \eqref{cubic LM 2} is not equivalent to
\eqref{eq:nesterov-cubic LM}, and similarly \eqref{cubic LM 1} is not equivalent
to \eqref{eq:nesterov-quartic LM}. The former are unconstrained quartically
regularized models, whereas the latter are Lagrangian reformulations of constrained
optimization problems. The structural difference is essential.

{
{
Thus, the classical NP-hardness results and the present SoS-exactness theory should be
viewed as complementary rather than competing statements: they describe different polynomial
optimization geometries, and it is precisely the Euclidean quartic regularization that
moves the problem into a tractable regime.

{\subsection{Implications for Higher-Order Optimization Methods: An Illustrative Example}}

Quartic optimization problems arise in a range of applications, including independent component analysis~\cite{cardoso1998blind}, blind channel equalization~\cite{mariere2003blind}, and sensor network localization~\cite{biswas2006semidefinite}.  
We consider the sensor localization problem, where $\A$ and $\S$ denote the sets of anchor and sensor nodes, respectively. The goal is to estimate sensor positions $x_i \in \R^3$ by minimizing the quartic objective
\begin{equation}
\label{eq:sensor_localization}
    \min_{x_i \in \mathbb{R}^3,\, i\in \S} 
    f_{\S}(x)
    :=
        \sum_{i,j \in \S} 
            \big( \|x_i - x_j\|^2 - d_{ij}^2 \big)^2
        +
        \sum_{i \in \S,\, j\in\A}
            \big( \|x_i - s_j\|^2 - d_{ij}^2 \big)^2 ,
\end{equation}
where $d_{ij}$ are (possibly noisy) distance measurements and $s_j$ are known anchor locations. 
This problem is known to be NP-hard~\cite{luo2010semidefinite}. Moreover, even deciding whether the optimal value of~\eqref{eq:sensor_localization} is zero is NP-hard. In practical regimes with few anchors ($|\A| \le |\S|$), the objective is highly nonconvex and often admits multiple symmetric global minimizers. As a result, standard semidefinite relaxations may fail to satisfy the rank (flat extension) condition, preventing extraction of a global solution even when the relaxation is tight. 

From the perspective of higher-order methods, this problem is well-suited for third-order methods since the third-order tensor can be applied to directions without explicitly forming the full dense tensor; see \Cref{appendix: Derivative for Sensor–Localization model} for details. Motivated by our theoretical results, we consider an SoS-based AR3 framework in which the regularization parameter $\sigma$ is increased until the subproblem becomes SoS. The proposed \texttt{AR3 + SoS} framework directly minimizes the original third-order derivativeplus a quartic regularization term and preserves the full third-order derivative information. In this regime, the shifted model admits an exact semidefinite representation, and the global minimizer of the subproblem (if unique) can be recovered in polynomial time. The practical algorithmic framework and numerical setups are outlined in Appendix~\ref{appendix global arp}.

To illustrate this behavior, we consider a small but representative instance with $|\S| = |\A| = 2$ with a noise level varying from $10^{-5} -10^{0}$. This leads to a quartic problem in $n=6$ variables. Our numerical experiments indicate that the \texttt{AR3 + SoS} approach consistently achieves lower objective values than classical methods \footnote{More details on the methods are provided in \Cref{appendix global arp}}, such as \texttt{ARC}, \texttt{AR3 + ARC}, and \texttt{fminunc}, often reaching a local minimizer. It is worth noting that, we also tried to reformulate the $6$–variable quartic directly  \eqref{eq:sensor_localization} in SoS program and solve it with GloptiPoly/MSDP. Directly applying moment-SDP relaxations to \eqref{eq:sensor_localization} fails to recover a global minimizer due to the presence of multiple solutions and violation of the rank condition.

These observations highlight a key distinction: rather than solving the original quartic problem globally via a single large SDP, the proposed framework solves a sequence of structured, regularized subproblems for which SoS exactness can be enforced. This provides a potential pathway for leveraging semidefinite programming within higher-order optimization methods.

\begin{figure}[ht]
\centering
    \includegraphics[width=7cm]{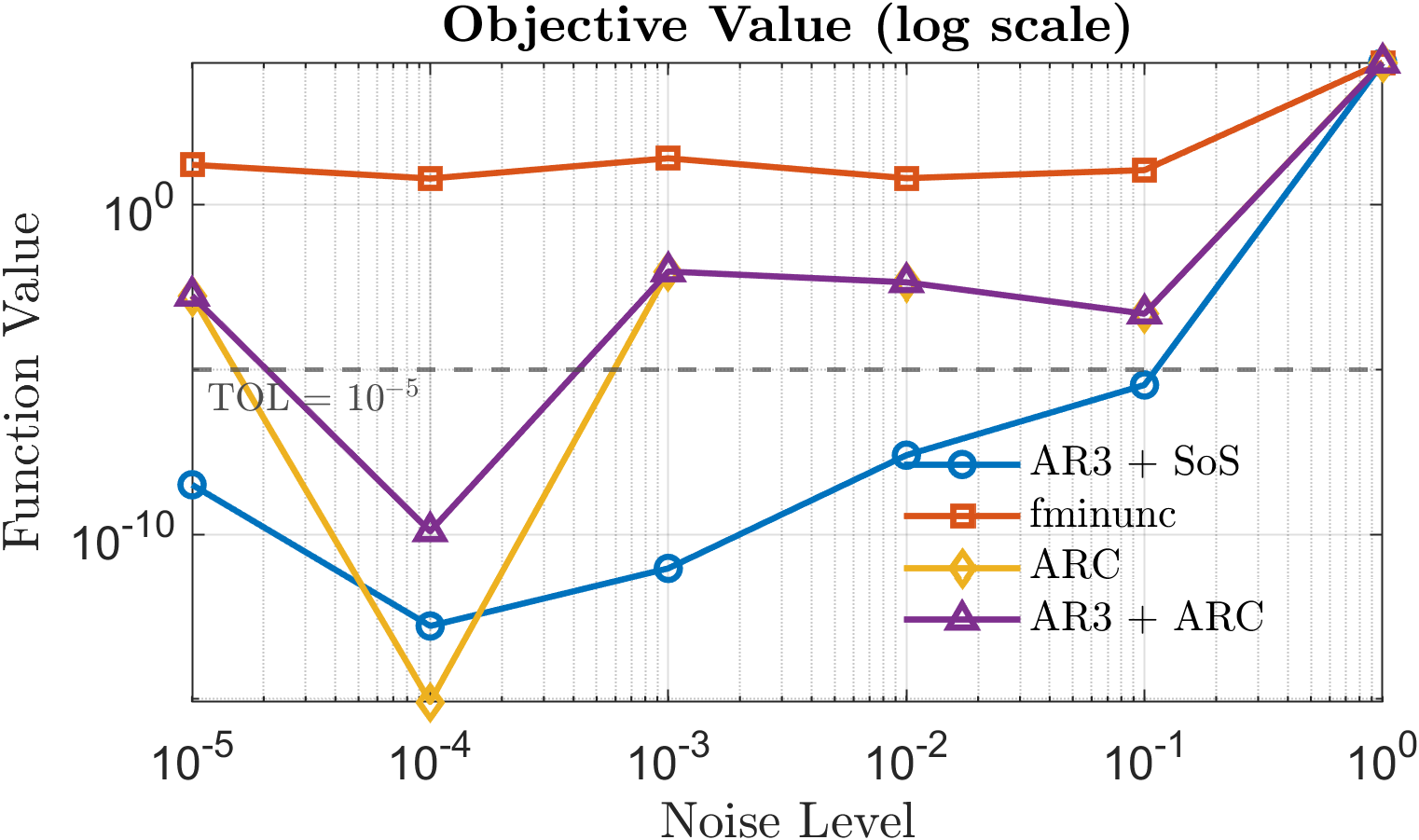}
\caption{\small Comparison of four methods on the sensor–localization problem with 
$2$ anchors and $2$ sensors ($n=6$ variables).  
Distances $d_{ij}$ are perturbed by adding Gaussian noise of magnitude $10^{-6}$ to $10^0$.  
All algorithms start from the anchor initialization.
 Function values at $x^*$ for different noise levels.} 
\label{fig:plot_table_side_by_side}
\end{figure}

\paragraph{Considerations.} 
We note that the global convergence of the \texttt{AR3 + SoS} algorithm, as well as the uniqueness of the subproblem minimizer, are separate questions and are left for future work. The current implementation relies on moment-SDP relaxations, which limits practical applicability to relatively small-scale problems due to the rapid growth of the SDP size $N = \binom{n+4}{4}$. 
Nevertheless, the structured nature of quartically regularized models—such as sparsity and low-rank tensor structure—suggests that significantly more scalable formulations may be possible. In particular, sparsity-adapted hierarchies and alternative structured SDP relaxations provide promising directions for reducing computational cost. Developing such scalable variants remains an important direction for future work.

\section{Conclusion and Outlook}
\label{sec: conclusion}

This paper studies when nonconvex quartically regularized polynomials admit exact sum-of-squares (SoS) certificates. While nonnegativity and SoS representations do not coincide in general and quartic polynomial optimization is NP-hard, our results show that Euclidean quartic regularization induces a fundamentally different regime.

The central insight is that regularization is not merely a regularizing tool, but a structural mechanism. 
Our main result establishes that, under mild assumptions and sufficiently large regularization, the shifted polynomial admits an exact SoS representation. This provides a constructive and explicit pathway from nonconvex polynomial models to tractable semidefinite programming formulations.  Beyond this asymptotic regime, we identify structured subclasses for which SoS exactness holds for all regularization parameters, including the quadratic--quartic model and a structured multivariate model with cubic term of the form $(t^Ts)\|s\|^2$. On the other hand, we show that this phenomenon is not universal: replacing the Euclidean quartic norm with separable quartic regularization can destroy SoS exactness even under arbitrarily large regularization. These results highlight that the geometry of the regularization term—rather than the polynomial degree alone—is the key factor governing SoS representability.

From an optimization perspective, our findings provide new insight into the role of quartic regularization in higher-order methods. Beyond ensuring descent and boundedness, it can place subproblems into a regime where global optimality becomes certifiable and, in certain cases, computationally tractable via semidefinite programming. Algorithmic framework and numerical experiments, including AR3 implementations and structured applications such as sensor localization, are left for future work. 

Several directions remain for future research. 
An important avenue is to further exploit the structure of quartically regularized models to design scalable semidefinite formulations. 
While standard SoS relaxations may suffer from the curse of dimensionality, the structured nature of the models studied in this paper—such as low-rank tensor representations and explicit optimality conditions—suggests that significantly smaller or more efficient formulations may be possible. Recent work provides encouraging evidence in this direction. For instance, tight semidefinite relaxations of dimension $O(n)$ have been derived for related cubic--quartically regularized models~\cite{zhou2025tight, zhu2023cubic}. 
Moreover, approaches based on sparsity and structured decompositions, including sparse SoS hierarchies and alternative relaxations such as DSOS/SDOS and BSOS~\cite{lasserre2017bounded,waki2006sums,weisser2018sparse,zheng2019sparse,ahmadi2019dSoS, li2025sos1}, offer promising direction for scalability. 
These developments suggest that, beyond worst-case complexity, structured quartically regularized polynomials may admit practically tractable semidefinite representations. Understanding how to systematically leverage such structure—through sparsity, low-rank tensor decompositions, or problem-specific reformulations—remains an important direction for future work. 
}}

\textbf{Acknowledgments}: This work was supported by the Hong Kong Innovation and Technology Commission
(InnoHK Project CIMDA). Coralia Cartis and Wenqi Zhu’s research was supported by the EPSRC grant
EP/Y028872/1, Mathematical Foundations of Intelligence: An “Erlangen Programme” for AI.

\appendix
\section{Proofs of the Main Results}

\subsection{\texorpdfstring{Proof for \Cref{lemma cubic form}}{Proof of lemma cubic}}
\label{appendix proof lemma cubic}

\begin{proof}
For a homogeneous cubic polynomial, $  c_3(v) = \frac{1}{6} T[v]^3$, we can write it in expanded form as 
$
c_3(v) := \sum_{1 \le i\le j \le k \le n}^n t_{ijk} v_iv_jv_k.  
$ 
Note that coefficients in the expanded form $\sum_{1 \le i\le j \le k \le n}^n t_{ijk} v_iv_jv_k$ correspond to the tensor entries  $\{T\}_{ijk}$ in tensor form $\frac{1}{6} T[v]^3$ as follows, $t_{ijk} = T_{ijk}$ for all distinct $ijk$, $t_{iij} =\frac{1}{2} T_{iij}$ and $t_{iii} = \frac{1}{6}T_{iii}$. 
We can further express  $c_3(v)$ as a sum of linear polynomials times quadratic polynomials
\begin{eqnarray}
    c_3(v):= \sum_{i,j, k = 1, i\le j \le k}^n  t_{ijk} v_iv_jv_k  &=& \bigg(\sum_{i=1}^n v_i^2 \bigg) \big( t^Tv\big) + \bigg[\sum_{1 \le i<j \le n} v_i v_j  \big({v^T b^{(ij)}}\big) \bigg] \notag
    \\ &=&  \bigg(\sum_{i=1}^n u_i \bigg) \big( t^Tv\big) + \bigg[\sum_{1 \le i<j \le n} z_{ij}  \big({v^T b^{(ij)}}\big) \bigg] 
    \label{Tv expression}
\end{eqnarray}
where $t = \{ t_{iii}\}_{1 \le i \le n} \in \R^n$. For $1\le i<j \le n$,  $b^{(ij)} = [t_{1ij}, \dotsc, t_{iij}^* \dotsc, t_{ijj}^*   \dotsc,t_{ijn}]^T \in \R^n$ with $t_{iij}^* = t_{iij} - t_{jjj} $ and $t_{ijj}^* : = t_{ijj} - t_{iii}$ . 
\end{proof}

\subsection{Proof of the SoS Representation Theorem}
\label{appendix proof thm 2.1}

In this appendix, we provide the full constructive proof of Theorem 2.1. 
The goal is to explicitly decompose the shifted polynomial
\[
q(v) = m_3(s^* + v) - m_3(s^*)
\]
into a sum of squares.

The proof proceeds in four main steps:
\begin{enumerate}
    \item We begin from the exact fourth-order expansion of $q(v)$ at $s^*$.
    \item We rewrite the cubic term $T[v]^3$ as a sum of quadratic monomials 
    multiplied by linear forms, introducing the variables $u_i = v_i^2$ and $z_{ij} = v_i v_j$.
    \item We complete squares for the quartic and mixed terms using a parameter 
    $\nu \in (0,\sigma)$, leading to three explicit sum-of-squares components.
    \item We collect the remaining terms into a quadratic form $B(s^*)[v]^2$ 
    and show that the entire expression is SoS whenever $B(s^*) \succeq 0$.
\end{enumerate}

We now give the detailed derivation.

\begin{proof}
Since  $\nabla m_3(s^*)=0$, from \Cref{remark q(v) expression}, 
\begin{eqnarray}
2 q(v) =   \nabla^2 m_3(s^*) [v]^2 + \frac{1}{3}T[v]^3 + \rr{2\sigma  \|v\|^2 v^T s^*}   +  \bb{\frac{\sigma}{2}\|v\|^4}.
\label{qv original for}
\end{eqnarray}
We aim to express every term in \eqref{qv original for} with $\omega  = [1,v, u, z]$. Fix a constant $\nu >0$, we write
\begin{eqnarray}
    \rr{\sigma v^T s^* \|v\|^2} &=&  (\sigma - \nu) v^T {s^*} \|v\|^2 + \nu \sum_{i=1}^n{s^*_iv_i^3} + \nu  \sum_{1 \le i < j \le n} v_i v_j (v_is^*_j+v_js^*_i) \notag
    \\ &=&  \rr{(\sigma - \nu)\bigg(\sum_{i=1}^n u_i \bigg)  (v^T {s^*}) + \nu \sum_{i=1}^n{s^*_iv_iu_i} + \nu  \sum_{1 \le i < j \le n} z_{ij} (v^T \tilde{s}^{(ij)}) }
\label{sigma vs expression}
\end{eqnarray}
where $\tilde{s}^{(ij)} = [0, \dotsc s^*_j, \dotsc, s^*_i, \dotsc 0] \in \R^n$ has $s^*_j$ on $i$th entry and $s^*_i$ on $j$th entry and all other zeros. Also,
\begin{eqnarray}
\bb{\sigma \|v\|^4 } = \sigma  \bigg(\sum_{i=1}^n v_i^2\bigg)^2 = \sigma   \bigg(\sum_{i=1}^n v_i^4 +  2 \sum_{1 \le i < j \le n} v_i^2 v_j^2   \bigg) = \bb{\sigma  \sum_{i=1}^n u_i^2 + 2 (\sigma - \nu)   \sum_{1 \le i < j \le n}  u_i  u_j  + 2  \nu \sum_{1 \le i < j \le n} z_{ij}^2. }
\label{sigma 4 expression}
\end{eqnarray}
Following from \eqref{Tv expression}, \eqref{qv original for}, \eqref{sigma vs expression},  \eqref{sigma 4 expression}, we have 
\small
\begin{eqnarray}
2q(v)  &=& \nabla^2 m_3(s^*) [v]^2  + \underbrace{\bb{\frac{\sigma - \nu}{2} \sum_{i=1}^n u_i^2  +  (\sigma - \nu) \sum_{1 \le i < j \le n} u_iu_j} + 
 2 \bigg(\sum_{i=1}^n u_i \bigg) \big( t^Tv\big) + \rr{2 (\sigma - \nu)\bigg(\sum_{i=1}^n u_i \bigg)  (v^T {s^*})}}_{:=\mathcal{I}_1} \notag
\\ && \qquad  + \underbrace{  \bb{ \nu  \sum_{1 \le i < j \le n} z_{ij}^2 } +  \sum_{1 \le i<j \le n} 2z_{ij}  \big({v^T b^{(ij)}}\big) + \rr{\nu  \sum_{1 \le i < j \le n} 2z_{ij} (v^T \tilde{s}^{(ij)})}}_{:=\mathcal{I}_2} + \underbrace{\rr{2 \nu \sum_{i=1}^n{s^*_iv_iu_i}}+  \bb{\frac{\nu}{2} \sum_{i=1}^n u_i^2}}_{:=\mathcal{I}_3}.  \label{2nd qv line}
\end{eqnarray}
\normalsize
Note that the expressions in \bb{blue} represents the homogeneous quartic terms  $\bb{\frac{\sigma}{2}\|v\|^4}$ from \eqref{sigma 4 expression}. The expressions in \rr{red} represents $\rr{\sigma v^T s^* \|v\|^2}$ from \eqref{sigma vs expression} and the rest of the cubic terms represents the tensor $\frac{1}{3}T[v]^3$ from \eqref{Tv expression}. 
$\mathcal{I}_1$, $\mathcal{I}_2$, and $\mathcal{I}_3$ can be reorganized as
\small
\begin{eqnarray*}
\mathcal{I}_1 &=& \underbrace{ \frac{\sigma - \nu}{2} \bigg[  \sum_{i=1}^n u_i + 2(v^Ts^*) + \frac{2t^Tv}{ \sigma -\nu} \bigg]^2}
_{:={\mathcal{SI}_1}}
-\underbrace{ \bigg( \frac{2}{\sigma - \nu} (t^Tv)^2 + 4 (v^Ts^*)  (t^Tv)\bigg)}_{\mathcal{BI}_1} - \pp{2 (\sigma - \nu)(v^Ts^*)^2} 
\\
\mathcal{I}_2 &=& \underbrace{\nu  \sum_{1 \le i < j \le n} \bigg[z_{ij} + v^T \tilde{s}^{(ij)}  +  \frac{v^T b^{(ij)}}{\nu} \bigg]^2}_{:={\mathcal{SI}_2}}
-  \underbrace{\sum_{1 \le i < j \le n} \bigg[ 2( v^T \tilde{s}^{(ij)} )
(v^T b^{(ij)})  +  \frac{(v^T b^{(ij)})^2}{\nu}}_{\mathcal{BI}_2} + \pp{\nu( v^T \tilde{s}^{(ij)} )^2}\bigg]
\\
\mathcal{I}_3 &=& \underbrace{\frac{\nu}{2}  \sum_{i=1}^n\big( u_i + 2s_i^*v_i\big)^2}_{:={\mathcal{SI}_3}} - \pp{2\nu \sum_{i=1}^n (s_i^*v_i)^2}.
\end{eqnarray*}
\normalsize
We collect the terms that contain multiple of $v_i$ and $s_i^*$, i.e., the last term in $\mathcal{I}_1$, $\mathcal{I}_2$, and $\mathcal{I}_3$, respectively (highlighted in \pp{purple}),
\begin{eqnarray}
  && 2 (\sigma - \nu)(v^Ts^*)^2 + \nu \sum_{1 \le i < j \le n} ( v_is_j^*+v_js_i^* )^2 + 2\nu \sum_{i=1}^n (s_i^*v_i)^2 \notag
 \\  &=&  2 (\sigma - \nu)(v^Ts^*)^2 + \nu  \sum_{1 \le i < j \le n} ( v_is_j^*+v_js_i^* )^2 + 2 \nu(v^Ts^*)^2 -  4 \nu \sum_{1 \le i < j \le n} s_i^*s_j^*v_iv_j \notag
 \\  &=& 2 \sigma (v^Ts^*)^2  + \underbrace{\nu  \sum_{1 \le i < j \le n}( v_is_j^*-v_js_i^* )^2 }_{:={\mathcal{BI}_3}}
 \label{2 vs}
\end{eqnarray}
Note that
\begin{eqnarray*}
    \mathcal{BI}_3 &=& \nu  \sum_{1 \le i < j \le n} v_i^2{s_j^*}^2+ v_j^2{s_i^*}^2 - 2 s_i^*s_j^*v_iv_j  = \nu \bigg(\sum_{i=1}^n v_i^2 \bigg) \|s^*\|^2  -\nu   \bigg(\sum_{i=1}^n s_i^*v_i \bigg)^2   = \nu v^T\bigg(\|s^*\|^2I_n - s^*{s^*}^T\bigg) v.
    \\  \mathcal{BI}_1 &=& \frac{2}{\sigma - \nu} (t^Tv)^2 + 4 (v^Ts^*)  (t^Tv) =   v^T\bigg[ \frac{2\, t t^T}{\sigma - \nu} +
       2 \big(s^* t^T + t {s^*}^T\big) \bigg]v.
  \\ \mathcal{BI}_2 &=&  \sum_{1 \le i < j \le n} \bigg[ 2( v^T \tilde{s}^{(ij)} ) 
(v^T b^{(ij)})  +  \frac{(v^T b^{(ij)})^2}{\nu}\bigg] =  v^T\bigg[
      \big(\tilde{s}^{(ij)}b^{(ij)} {}^T  + b^{(ij)} \tilde{s}^{(ij)} {}^T \big)
      + \frac{b^{(ij)} {b^{(ij)}}^T }{\nu}
    \bigg]v.
\end{eqnarray*}
Using $\mathcal{I}_1$, $\mathcal{I}_2$,  $\mathcal{I}_3$ , \eqref{2 vs} and $\mathcal{BI}_1$, $\mathcal{BI}_2$, and $\mathcal{BI}_3$ , 
\begin{eqnarray}
2q(v)   &=& \bigg(\nabla^2 m_3(s^*) [v]^2  - 2 \sigma (v^Ts^*)^2 - {\mathcal{BI}_1} -{\mathcal{BI}_2}-{\mathcal{BI}_3}\bigg) + {\mathcal{SI}_1}+{\mathcal{SI}_2}+{\mathcal{SI}_3} \notag
\\ &=& B(s^*) [v]^2 + {\mathcal{SI}_1}+{\mathcal{SI}_2}+{\mathcal{SI}_3} 
\label{sum of square expression in q}
\end{eqnarray}
where ${\mathcal{SI}_1}$ to ${\mathcal{SI}_3}$ are the nonnegative square polynomials in $v$ defined in  $\mathcal{I}_1$ to $\mathcal{I}_3$. Comparing our derivations with the expression of $B(s^*)$ in \eqref{eq:B-sstar}, we see that
the first three terms of $B(s^*)$ come from $\nabla^2 m_3(s^*) - 2 \sigma s^*{s^*}^T$, while the remaining terms in  $B(s^*)$ are from  ${\mathcal{BI}_1}$ to ${\mathcal{BI}_3}$.
The matrix \(B(s^*)\) depends only on \(s^*\) and the coefficients of the model \(m_3\), namely \(g\), \(H\), \(T\), and \(\sigma\).  
{Since $B(s^*)$ is symmetric and positive semidefinite, the quadratic form $B(s^*)[v]^2$ is a sum of squares. 
Together with the remaining sum-of-squares terms ${\mathcal{SI}_1}+{\mathcal{SI}_2}+{\mathcal{SI}_3}$, this implies that $q(v)$ is itself a sum-of-squares polynomial.}
Since $ q(v) := m_3(s^* + v) - m_3(s^*) \ge 0$ for all $v$,  we deduce that  \(s^*\) is a global minimizer.
\end{proof}

\begin{remark} \label{other form of Q n=2}
(Alternative form of $Q$ for $n=2$)
    Let  $\nu$, $\delta$, and $\beta$ be arbitrary constants representing the degrees of freedom in choosing the matrix $\Q$.
\small
$$
\Q = \begin{bmatrix} 
 0 &  0  &  0 & 0 & 0  & 0
\\0 &  \nabla^2 m_3(s^*) &  & \sigma s_1 + t_{111} & (\sigma - \nu)  s_1 + t_{122} + \beta &  \nu  s_2  + t_{112} + \delta
\\0 &   &  & (\sigma - \nu) s_2  - \delta &  \sigma s_2 + t_{222} &  \nu  s_1  - \beta
\\ 0 &  \ast  &  \ast &   \frac{\sigma}{2} &  \frac{\sigma}{2} - \frac{\nu}{2} & 0
\\ 0 &  \ast &  \ast   &  \frac{\sigma}{2} - \frac{\nu}{2}  &  \frac{\sigma}{2} & 0
\\ 0 &    \ast &   \ast &   0  &   0 & \nu
 \end{bmatrix}.
 $$
\normalsize
\end{remark}

\subsection{SoS Certificate implies Local Optimality}
\label{appendix proof sos local min}

\begin{lemma} (SoS certificate implies local optimality conditions)
    The conditions in \Cref{thm: certify SoS Quartically Regularized Polynomial} imply that both first- and second-order local optimality conditions of $m_3$ are satisfied at $s^*$; namely, $
\nabla m_3(s^*) = 0
$
and  
$
\nabla^2 m_3(s^*) \succeq 0$, respectively.
\end{lemma}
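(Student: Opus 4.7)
The first-order condition $\nabla m_3(s^*) = 0$ is part of the hypotheses of \Cref{thm: certify SoS Quartically Regularized Polynomial}, so the only real work is the second-order condition $\nabla^2 m_3(s^*) \succeq 0$. My plan is to exploit what has already been proved: \Cref{thm: certify SoS Quartically Regularized Polynomial} establishes that $q(v) := m_3(s^* + v) - m_3(s^*)$ is a sum-of-squares polynomial, hence $q(v) \ge 0$ for every $v \in \mathbb{R}^n$. In particular, $s^*$ is a global (and therefore local) minimizer of $m_3$, so the second-order necessary condition should follow from a standard Taylor-expansion argument at the origin.

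Concretely, I would use the exact fourth-order expansion from \Cref{remark q(v) expression}. With $\nabla m_3(s^*) = 0$ this reduces to
\[
q(v) = \tfrac{1}{2}\,\nabla^2 m_3(s^*)[v]^2 + \tfrac{1}{6}\,T[v]^3 + \sigma\,{s^*}^{T} v\,\|v\|^2 + \tfrac{\sigma}{4}\,\|v\|^4.
\]
Fix any direction $v \in \mathbb{R}^n$, substitute $tv$ for $v$ with $t > 0$, and divide by $t^2$ to isolate the quadratic piece: $t^{-2} q(tv) = \tfrac{1}{2}\,\nabla^2 m_3(s^*)[v]^2 + O(t)$ as $t \downarrow 0$, since the cubic and quartic remainders contribute at orders $t^3$ and $t^4$, both of which vanish faster than $t^2$. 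Because $q(tv) \ge 0$ for every $t > 0$, passing $t \downarrow 0$ forces $\nabla^2 m_3(s^*)[v]^2 \ge 0$, and since $v$ was arbitrary we conclude $\nabla^2 m_3(s^*) \succeq 0$.

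I do not expect any real obstacle here, because all the analytical effort has already been absorbed into the SoS certificate of \Cref{thm: certify SoS Quartically Regularized Polynomial}; this lemma is essentially a sanity check that the certificate recovers the classical local necessary conditions. One could alternatively attempt a direct matrix comparison by computing $\nabla^2 m_3(s^*) - B(s^*)$ and showing it is PSD, but this would require re-assembling $2\sigma s^*{s^*}^T$ with the $\mathcal{BI}_1,\mathcal{BI}_2,\mathcal{BI}_3$ correction terms and would essentially duplicate the algebra in the proof of \Cref{thm: certify SoS Quartically Regularized Polynomial}; the Taylor route above is strictly cheaper.
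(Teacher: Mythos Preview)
Your argument is correct: once \Cref{thm: certify SoS Quartically Regularized Polynomial} certifies that $q(v)\ge 0$, the second-order necessary condition follows by the standard limiting argument along rays, exactly as you describe. The paper, however, takes a deliberately different route. It frames this lemma as a ``double check'' of the theorem's algebra, and therefore proves $\nabla^2 m_3(s^*)\succeq 0$ directly from the hypothesis $B(s^*)\succeq 0$, \emph{without} invoking the SoS conclusion. Concretely, it rearranges \eqref{eq:B-sstar} into
\[
\nabla^2 m_3(s^*)\;\succeq\; 2\sigma\,s^*{s^*}^T + \nu\bigl(\|s^*\|^2 I_n - s^*{s^*}^T\bigr) + \mathcal{BF}_1 + \mathcal{BF}_2,
\]
completes the square in each $\mathcal{BF}_j$, uses the identity $\sum_{i<j}\tilde{s}^{(ij)}\tilde{s}^{(ij)T}=\|s^*\|^2 I_n + s^*{s^*}^T - 2\,\diag({s_1^*}^2,\dots,{s_n^*}^2)$, and finds that all the residual terms collapse to $2\nu\,\diag({s_1^*}^2,\dots,{s_n^*}^2)\succeq 0$.

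Your Taylor route is strictly shorter and is a perfectly valid proof of the lemma as stated. What it does not provide is the independent consistency check the paper is after: since you invoke the conclusion of \Cref{thm: certify SoS Quartically Regularized Polynomial}, any hidden sign or bookkeeping error in that proof would be inherited, whereas the paper's direct matrix computation would expose an inconsistency between $B(s^*)\succeq 0$ and $\nabla^2 m_3(s^*)\succeq 0$. You anticipated this alternative in your final paragraph; that is precisely the route the paper chooses.
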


\begin{proof}
The condition  $
\nabla m_3(s^*) = 0
$ in \Cref{thm: certify SoS Quartically Regularized Polynomial} already gives  the first-local optimality conditions of $m_3$. It remains to show that $B(s^*) \succeq 0$ implies $
\nabla^2 m_3(s^*) \succeq 0$. Note that $
\nabla^2 m_3(s^*) = H + T[s^*] + \sigma\|s^*\|^2 I_n + 2\sigma\, s^*{s^*}^T $. We rearrange \eqref{eq:B-sstar} into
\begin{eqnarray}
 \nabla^2 m_3(s^*) &\succeq&  2\sigma\, s^*{s^*}^T +  \nu \big(\,\|s^*\|^2 I_n - s^* s^{*T}\big)  + \notag
\\ &&  \qquad  \underbrace{\frac{2}{\sigma-\nu}\, t t^T  + 2 (s^* t^T + t s^{*T})}_{:= \mathcal{BF}_1} +  \underbrace{\sum_{1 \le i < j \le n}     \left(
      \tilde{s}^{(ij)} b^{(ij)T}
      + b^{(ij)} \tilde{s}^{(ij)T}
      + \frac{1}{\nu} b^{(ij)} b^{(ij)T}   \right)}_{:= \mathcal{BF}_2}. 
      \label{second order dev m3}
\end{eqnarray}
We have
\begin{eqnarray}
\label{bf1}
\mathcal{BF}_1 =2 \bigg(\sqrt{(\sigma-\nu)^{-1}} t+ \sqrt{\sigma-\nu } s^*\bigg)\bigg(\sqrt{(\sigma-\nu)^{-1}} t+ \sqrt{\sigma-\nu}   s^*\bigg)^T-2 (\sigma-\nu ) s^* {s^*}^T
\end{eqnarray}
and
\begin{eqnarray}
\label{bf2}
\mathcal{BF}_2 = \sum_{1 \le i < j \le n}  \bigg(\sqrt{{\nu} }   \tilde{s}^{(ij)} + \sqrt{{\nu}^{-1}} b^{(ij)} \bigg) \bigg(\sqrt{{\nu} }   \tilde{s}^{(ij)} + \sqrt{{\nu}^{-1}} b^{(ij)} \bigg)^T - \nu    \sum_{1 \le i < j \le n} \tilde{s}^{(ij)} \tilde{s}^{(ij)T}.
\end{eqnarray}
Since  $\tilde{s}^{(ij)} = [0, \dotsc s^*_j, \dotsc, s^*_i, \dotsc 0] \in \R^n$, we can write  $
\tilde{s}^{(ij)} := s^*_j e_i + s^*_i e_j ,
$
where $\{e_k\}_{k=1}^n$ denotes the standard basis of $\mathbb{R}^n$. Then
$
\tilde{s}^{(ij)} \tilde{s}^{(ij)T}
= {s^*_j}^2\, e_i e_i^T + {s^*_i}^2\, e_j e_j^T
+ s^*_i s^*_j \big( e_i e_j^T + e_j e_i^T \big).
$
Summing over all $1 \le i < j \le n$, the resulting matrix has diagonal entries $ \sum_{\ell \neq k} s_\ell^2 = \|s^*\|^2 - s_k^2,
$ and off-diagonal entries $M_{pq} = s^*_p s^*_q \quad (p \neq q).$ The sum admits the compact representation
\begin{eqnarray}
\label{bf3}
\sum_{1 \le i < j \le n} \tilde{s}^{(ij)} \tilde{s}^{(ij)T}
= \|s^*\|^2 I + s^* {s^*}^T - 2\,\diag\{{s^*_1}^2,\dots,{s^*_n}^2\}.
\end{eqnarray}
Substituting \eqref{bf1}, \eqref{bf2}, \eqref{bf3} in \eqref{second order dev m3}, we obtain that
\begin{eqnarray*}
 \nabla^2 m_3(s^*) &\succeq&  \underbrace{2\sigma\, s^*{s^*}^T +  \nu \big(\,\|s^*\|^2 I_n - s^* s^{*T}\big)  -2 (\sigma-\nu ) s^* {s^*}^T  - \nu   \bigg(  \|s^*\|^2 I + s^* {s^*}^T - 2\,\diag\{{s^*_1}^2,\dots,{s^*_n}^2\} \bigg)}_{\mathcal{BF}} \notag
\\ &&  \qquad  2 \bigg(\sqrt{(\sigma-\nu)^{-1}} t+ \sqrt{\sigma-\nu } s^*\bigg)\bigg(\sqrt{(\sigma-\nu)^{-1}} t+ \sqrt{\sigma-\nu}   s^*\bigg)^T +
\\ &&  \qquad  \sum_{1 \le i < j \le n}  \bigg(\sqrt{{\nu} }   \tilde{s}^{(ij)} + \sqrt{{\nu}^{-1}} b^{(ij)} \bigg) \bigg(\sqrt{{\nu} }   \tilde{s}^{(ij)} + \sqrt{{\nu}^{-1}} b^{(ij)} \bigg)^T.
\end{eqnarray*}
Rearranging the first term gives
$
\mathcal{BF} = 2\nu \diag\!\big({s_1^*}^2,\dots,{s_n^*}^2\big),
$
which is positive semidefinite. The next two terms are also positive semidefinite. Hence,
$
\nabla^2 m_3(s^*) \succeq 0
$,  $B(s^*) \succeq 0$ implies the second-order local optimality conditions of $m_3$.  
\end{proof}

\subsection{\texorpdfstring{Proof for \Cref{Q corollary}}{Proof of Q corollary}}
\label{appendix proof Q corollary}

\begin{proof} 
Using  $\omega:=  [1,v, u, z]^T$ as given in \Cref{phi2}, we analyze $ \omega^T \Q \omega$ term by term. Using $\mathbf{1}^T u = \sum_{i=1}^n u_i$, the   $u$-$u$ block yields 
  $$
  u^T U u = \frac{\sigma}{2} \sum_{i=1}^n u_i^2 +  (\sigma - \nu) \sum_{1 \le i < j \le n} u_iu_j. 
  $$
 The   $z$-$z$ block gives $ \nu  \sum_{1 \le i < j \le n} z_{ij}^2 $. 
The $v$-$u$ block gives
\begin{eqnarray*}
    u^T D(s^*) v =  2 \bigg(\sum_{i=1}^n u_i \bigg) \big( t^Tv\big) + 2 (\sigma - \nu)\bigg(\sum_{i=1}^n u_i \bigg)  (v^T {s^*}) +2 \nu \sum_{i=1}^n{s^*_iv_iu_i}.
\end{eqnarray*}
The $v$-$z$ block gives
\begin{eqnarray*}
    u^T C(s^*) z =    \nu  \sum_{1 \le i < j \le n} z_{ij} (v^T \tilde{s}^{(ij)}) +  \sum_{1 \le i<j \le n} z_{ij}  \big({v^T b^{(ij)}}\big).
\end{eqnarray*}
By adding the terms from these blocks and comparing $\omega^T \Q \omega$ with \eqref{2nd qv line}, we obtain
$
\omega^T \Q \omega = 2 q(v).
$
If the conditions of \Cref{thm: certify SoS Quartically Regularized Polynomial} are satisfied, then $q(v)$ is a sum of squares. By \Cref{thm SoS alternative}, we conclude that $\Q \succeq 0$.
\end{proof}

\subsection{\texorpdfstring{Global Optimality Conditions for $m_3$}{Global Optimality Conditions for m3}}
\label{appendix global opt}

For a general $m_3$, global optimality conditions were established in \cite{zhu2025global}. Let $s^* \in \mathbb{R}^n$ satisfy $\nabla m_3(s^*) = 0$. 
If $s^*$ is a global minimum, then
\begin{equation}
    \tag{Necessary cond.}
B_N(s^*): = H + \frac{2}{3} T [s^*] +\sigma \|s^*\|^2 I_n  + \frac{\|T\|}{3} \|s^*\| I_n\succeq 0.
  \label{necessary tight}
\end{equation}
If the following is satisfied,
\begin{equation}
    \tag{Sufficient cond.}
B_S(s^*): = H +  \frac{2}{3}T [s^*]  +\sigma \|s^*\|^2 I_n  -\frac{\|T\| }{3} \|s^*\|I_n- \frac{\|T\| ^2}{18\sigma} I_n \succeq 0,
\label{sufficient tight}
\end{equation} 
then $s^*$ is a global minimum.

\begin{theorem}
\label{thm nece = suff}
Assume that $m_3$ is locally nonconvex at $s=0$, namely  $\nabla m_3(0) = H$ is not positive definite. 
Let \(s^* \in \mathbb{R}^n\) satisfy
$\nabla m_3(s^*)=0$. 
If \(\sigma\) is chosen such that
\begin{eqnarray}
    \sigma \ge 9 \max \bigg\{\frac{27 \lambda_*^3 }{\|g\|^2}, \quad \frac{ \|H\|^3 }{\|g\|^2} , \quad  \bigg( \frac{\|T\|^3}{ \|g\|}\bigg)^{1/2}  \bigg\}, 
    \label{sigma bound for necc suff}
\end{eqnarray}
where $\lambda_*: = \min \{-\lambda_{\min}(H), 0\} \ge 0$, $\|H\|$, $\|T\|$ are defined in \eqref{def of norm 1}--\eqref{def of norm 2}.   
Then, the following statements hold:
\begin{enumerate} \setlength{\itemindent}{0pt}
\item $\big(\frac{\|g\|}{3 \sigma} \big)^{1/3} \le \|s^*\| \le 2 \big(\frac{\|g\|}{ \sigma} \big)^{1/3}  $ and  $
\sigma \ge 3 \max \bigg\{
\lambda_* \|s^*\|^{-2}, 
\|T\| \, \|s^*\|^{-1}
\bigg\},$ 
    \item \(B_S(s^*) \succeq 0\), \(s^*\) is the global minimizer of \(m_3\), the necessary condition~\eqref{necessary tight} and sufficient condition~\eqref{sufficient tight} coincide.
\end{enumerate}
\end{theorem}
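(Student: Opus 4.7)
The strategy is two-fold: first invoke \Cref{thm bound for $s^*$} to obtain quantitative control on $\|s^*\|$, then use this control to show that the regularization term $\sigma\|s^*\|^2 I_n$ in $B_S(s^*)$ dominates the remaining (possibly negative) contributions.

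For Result 1, I would first verify that \eqref{sigma bound for necc suff} implies the hypothesis \eqref{bound for sigma thm} of \Cref{thm bound for $s^*$}, which is a routine check since each of the three terms in \eqref{sigma bound for necc suff} dominates the corresponding term in \eqref{bound for sigma thm} (the factor $9$ on $\|H\|^3/\|g\|^2$ matches, and the coefficient $9$ on $(\|T\|^3/\|g\|)^{1/2}$ easily dominates $\sqrt{3/8}$). This yields the bilateral bound $(\|g\|/(3\sigma))^{1/3}\le\|s^*\|\le 2(\|g\|/\sigma)^{1/3}$. From the lower bound, $\|s^*\|^{-2}\le (3\sigma/\|g\|)^{2/3}$ and $\|s^*\|^{-1}\le (3\sigma/\|g\|)^{1/3}$; plugging these into $3\lambda_*\|s^*\|^{-2}$ and $3\|T\|\|s^*\|^{-1}$ and using the bounds $\sigma\ge 243\lambda_*^3/\|g\|^2$ and $\sigma\ge 9(\|T\|^3/\|g\|)^{1/2}$ respectively, both inequalities $\sigma\ge 3\lambda_*\|s^*\|^{-2}$ and $\sigma\ge 3\|T\|\|s^*\|^{-1}$ follow after straightforward algebraic rearrangement.

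For Result 2, I would bound $\lambda_{\min}(B_S(s^*))$ from below using $H\succeq -\lambda_* I_n$ and $T[s^*]\succeq -\|T\|\|s^*\|I_n$, yielding
\begin{equation*}
\lambda_{\min}(B_S(s^*))\ge \sigma\|s^*\|^2 - \lambda_* - \|T\|\|s^*\| - \frac{\|T\|^2}{18\sigma}.
\end{equation*}
The three negative contributions are controlled by Result 1: $\lambda_*\le \sigma\|s^*\|^2/3$, $\|T\|\|s^*\|\le\sigma\|s^*\|^2/3$, and by squaring $\sigma\ge 3\|T\|\|s^*\|^{-1}$ one obtains $\|T\|^2/(18\sigma)\le \sigma\|s^*\|^2/162$. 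Summing the resulting fractions yields $\lambda_{\min}(B_S(s^*))\ge (1 - 1/3 - 1/3 - 1/162)\sigma\|s^*\|^2 = (53/162)\sigma\|s^*\|^2>0$, hence $B_S(s^*)\succeq 0$. The sufficient condition \eqref{sufficient tight} then guarantees that $s^*$ is a global minimizer. Finally, since $B_N(s^*)=B_S(s^*)+\tfrac{2}{3}\|T\|\|s^*\|I_n+\tfrac{\|T\|^2}{18\sigma}I_n\succeq B_S(s^*)\succeq 0$, the necessary condition \eqref{necessary tight} is automatically satisfied at $s^*$, and the two conditions collapse to a single criterion in this regime.

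The main obstacle is the careful tracking of universal constants: the factors $9$, $27$, and $9$ in \eqref{sigma bound for necc suff} are precisely calibrated so that the three normalized ratios appearing in the lower bound of $\lambda_{\min}(B_S(s^*))$ sum to strictly less than $1$. A looser bound on $\sigma$ would produce only a nonnegative-but-not-strictly-positive estimate and the downstream deduction of global optimality would break; a tighter bound requires deriving the $\|s^*\|$ estimates with the same sharpness as \Cref{thm bound for $s^*$} allows.
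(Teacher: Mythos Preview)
Your proposal is correct and follows essentially the same route as the paper: invoke \Cref{thm bound for $s^*$} to get the bilateral bound on $\|s^*\|$, derive from the lower bound that $\sigma\ge 3\max\{\lambda_*\|s^*\|^{-2},\|T\|\|s^*\|^{-1}\}$, and then show that each negative contribution to $\lambda_{\min}(B_S(s^*))$ is absorbed by a fraction of $\sigma\|s^*\|^2$. The paper simply allocates $\tfrac{\sigma}{3}\|s^*\|^2$ to each of the three negative terms (noting $\sigma>6^{-1/2}\|T\|\|s^*\|^{-1}$ suffices for the $\|T\|^2/(18\sigma)$ term), arriving at $\lambda_{\min}(B_S(s^*))\ge 0$ rather than your strictly positive $(53/162)\sigma\|s^*\|^2$, but the argument is otherwise identical.
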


\begin{proof}
\textbf{To prove result 1:} 
If $ \sigma$ satisfies \eqref{sigma bound for necc suff}, then $ \sigma$ satisfies the condition in \Cref{thm bound for $s^*$}, and the bound on $\|s^*\|$ in the first result follows directly from \Cref{thm bound for $s^*$}.  
Moreover, from \eqref{sigma bound for necc suff}, we have
\begin{eqnarray*}
\sigma^{1/3} \ge  3^{5/3}\|g\|^{-2/3} \lambda_*   , \qquad 
 \sigma^{2/3} \ge   3^{4/3}   \|g\|^{-1/3} \|T\| . 
\end{eqnarray*}
We deduce that
\begin{eqnarray}
\sigma \ge 3  \max \bigg\{    3^{2/3} \sigma^{2/3} \|g\|^{-2/3} \lambda_*  ,  \quad 3^{1/3}  \sigma^{1/3}  \|g\|^{-1/3}   \|T\| \bigg\} \ge  3  \max \bigg\{\lambda_* \|s^*\|^{-2},  \quad    \|T\|  \|s^*\|^{-1}  \bigg\}
\label{sigma bound 0 SoS}
\end{eqnarray}
where the last inequality uses $\|s^*\| \ge \big(\frac{\|g\|}{3 \sigma} \big)^{1/3} $ (i.e.,  $  3^{1/3}{\sigma}^{1/3} \|g\|^{-1/3} \ge \|s^*\|^{-1}  $) from \Cref{thm bound for $s^*$}. 

\noindent \textbf{To prove result 2:} 
Given \eqref{sigma bound 0 SoS}, we have
\begin{eqnarray*}
   \sigma  > 6^{-1/2} \|T\| \|s^*\|^{-1} 
   \qquad &\Rightarrow& \qquad  \frac{\sigma}{3} \|s^*\|^2 >\frac{{\|T\|}^2} {18\sigma}, 
\\ \sigma  \ge 3 \|T\| \|s^*\|^{-1} 
\qquad &\Rightarrow& \qquad  
\frac{\sigma}{3}  \|s^*\|^2  \ge \|T\|\|s^*\| ,
\\\sigma  \ge 3 \lambda_* \|s^*\|^{-2}
\qquad &\Rightarrow& \qquad   
\frac{\sigma}{3} \|s^*\|^2  \ge  \lambda_*.
\end{eqnarray*}    
We deduce that
\[
B_S(s^*) \succeq 
\bigg(
\lambda_* - \|T\| \|s^*\| 
+ \sigma \|s^*\| 
- \frac{\|T\|^2}{18\sigma}
\bigg) I_n 
\succeq 0.
\]
Since $\nabla m_3(s^*)=0$ and \(B_S(s^*) \succeq 0\), it follows that \(s^*\) is the global minimizer of \(m_3\). 
Furthermore, as \(s^*\) is a global minimizer, the necessary condition \(B_N(s^*) \succeq 0\) also holds. 
By construction, we already have \(B_N(s^*) \succeq B_S(s^*) \succeq 0\), 
and thus the necessary condition~\eqref{necessary tight} 
and the sufficient condition~\eqref{sufficient tight} coincide.
\end{proof}


\section{Auxiliary Lemmas and Definitions}

\subsection{Definitions and Preliminaries for SoS Convex}
\label{def sos convex}

Let $\R[s]^{n\times n}$ be the real vector space of  $n \times n$ real polynomial matrices, where each entry of such a matrix is a polynomial with real coefficients.

\begin{definition} {(SoS-convex, \cite[Def. 2.4]{ahmadi2013complete} \cite{helton2010semidefinite})}  
$h(s)$ is SoS-convex if its Hessian $\nabla^2 h(s)$ is an SoS-matrix. A  symmetric polynomial matrix $\check{H}(s) \in \R[s]^{n\times n}$  is an SoS-matrix if there exists a polynomial matrix $\check{V}(s) \in \R[s]^{n_1\times n}$ for some $n_1 \in \mathbb{N}$, such that $\check{H}(s) = \check{V}(s)^T\check{V}(s)$. 
\label{def SoS convex}
\end{definition}

\noindent
Clearly, if \(h(s): \mathbb{R}^n \rightarrow \mathbb{R}\) is SoS-convex, then \(h(s)\) is convex~\cite{ahmadi2013complete, kojima2003sums}. 
Recently, Ahmadi et al.~\cite{ahmadi2023higher} proved that for a locally strongly convex quartically regularized polynomial ( \(H \succeq \delta I_n \succ 0\)), there exists a sufficiently large regularization parameter \(\sigma\) such that the polynomial becomes SoS-convex (see \Cref{SoS convex thm}). 

\begin{theorem}(\cite{ahmadi2023higher}, Lemma~3 and Thm~3)
If \(H \succeq \delta I_n \succ 0\), then the following sum of  squares programming
\begin{equation*}
   \bar{\sigma} := \min_{\sigma} \; \sigma
   \qquad \text{s.t.} \qquad \eqref{m3} \text{ is SoS-convex}
\end{equation*}
is feasible. Moreover, this program can
be reformulated as an SDP of size that is 
polynomial in $n$.
\label{SoS convex thm}
\end{theorem}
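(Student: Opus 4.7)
The plan is to establish feasibility by exhibiting an explicit SoS-matrix decomposition of $\nabla^2 m_3(s)$ for $\sigma$ sufficiently large, and then to note that the resulting SoS-matrix constraint is encoded by a semidefinite program of polynomial size.

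First I would compute the Hessian directly. Letting $\hat{T}_k \in \R^{n\times n}$ denote the symmetric slice $(\hat{T}_k)_{ij}:=T_{ijk}$ and $\T(s):=\sum_{k=1}^n s_k \hat{T}_k$ (i.e.\ the Hessian of $\tfrac{1}{6}T[s]^3$), a direct calculation yields
\begin{equation*}
\nabla^2 m_3(s) \;=\; H + \T(s) + \sigma \|s\|^2 I_n + 2\sigma\, s s^T.
\end{equation*}
The central algebraic step is the completion-of-squares identity, valid for every $\sigma>0$,
\begin{equation*}
\sum_{k=1}^n \Bigl( \tfrac{1}{2\sqrt{\sigma}}\, \hat{T}_k + \sqrt{\sigma}\, s_k I_n \Bigr)^{\!2} \;=\; \tfrac{1}{4\sigma} \sum_{k=1}^n \hat{T}_k^2 \;+\; \T(s) \;+\; \sigma \|s\|^2 I_n,
\end{equation*}
obtained by expanding each squared polynomial matrix and using symmetry of $\hat{T}_k$. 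Substituting this identity, together with the trivial square $2\sigma\, ss^T = (\sqrt{2\sigma}\,s)(\sqrt{2\sigma}\,s)^T$, into the Hessian gives
\begin{equation*}
\nabla^2 m_3(s) \;=\; \underbrace{\sum_{k=1}^n \Bigl(\tfrac{1}{2\sqrt{\sigma}}\, \hat{T}_k + \sqrt{\sigma}\, s_k I_n\Bigr)^{\!2} + 2\sigma\, s s^T}_{\text{SoS-matrix}} \;+\; \Bigl( H - \tfrac{1}{4\sigma} \sum_{k=1}^n \hat{T}_k^2 \Bigr),
\end{equation*}
where the underbraced polynomial matrix is an SoS-matrix by construction and the bracketed remainder is a constant symmetric matrix.

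Next I would bound the remainder. Since $H \succeq \delta I_n$ and $M := \sum_{k=1}^n \hat{T}_k^2 \succeq 0$ is a fixed matrix depending only on the tensor $T$, choosing $\sigma \geq \|M\|_{\mathrm{op}}/(4\delta)$ forces $H - \tfrac{1}{4\sigma} M \succeq 0$, which is trivially an SoS-matrix as a constant PSD matrix. Hence for all such $\sigma$ the Hessian is an SoS-matrix, $m_3$ is SoS-convex, and the program is feasible. For the SDP reformulation, the SoS-matrix property of $\nabla^2 m_3(s)$ is equivalent to the scalar polynomial $y^T \nabla^2 m_3(s) y \in \R[s,y]$ being SoS in $2n$ variables; this polynomial has $y$-degree exactly $2$ and $s$-degree at most $2$, so an SoS certificate can be sought on the monomial basis $\{ y_i,\, s_j y_i : i,j \in [n]\}$ of size $n+n^2$. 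Matching coefficients against a Gram matrix of dimension $O(n^2)$ with $\sigma$ entering as a single linear decision variable yields a standard linear SDP of size polynomial in $n$.

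The main obstacle in this plan is isolating the right algebraic identity: the cubic mixed term $\T(s)$ is what prevents a naive term-by-term SoS argument, and the completion-of-squares trick applied to the tensor slices $\hat{T}_k$ is precisely the device that converts this cross-term into a polynomial-square plus a constant defect $\tfrac{1}{4\sigma}\sum_k \hat{T}_k^2$. Once this defect is isolated, it is absorbed by the local-convexity margin $\delta$ as soon as $\sigma$ exceeds the threshold $\|\sum_k \hat{T}_k^2\|_{\mathrm{op}}/(4\delta)$; all remaining manipulations are routine.
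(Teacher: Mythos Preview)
Your proposal is correct. Note, however, that the paper does not give its own proof of this theorem: it is stated purely as a citation of \cite{ahmadi2023higher} (Lemma~3 and Theorem~3), and in Section~\ref{sec: Locally convex m3} the paper only quotes the explicit threshold $\sigma > 8\|T\|^2/\delta$ from that reference without reproducing any argument. Your completion-of-squares identity on the tensor slices $\hat{T}_k$ is precisely the device used in \cite{ahmadi2023higher}, and your threshold $\sigma \ge \|\sum_k \hat{T}_k^2\|_{\mathrm{op}}/(4\delta)$ has the same structure as the quoted bound (the difference in constant and norm is only a matter of which tensor norm one uses to majorize the defect matrix). The SDP-size argument via the bi-homogeneous monomial basis $\{y_i,\, s_j y_i\}$, yielding a Gram matrix of dimension $n+n^2$, is likewise the standard reduction and is correct.
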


\noindent
By choosing $\sigma$ under this setup, the resulting polynomial is strongly convex, its global minimizer exists and is unique. 
Using the SoS-convexity, we deduce that the unique global minimizer can be obtained by solving an SDP whose size is polynomial in \(N = \binom{n+2d}{2d}\). 
Therefore, for locally convex regularized polynomials, both determining such a regularization parameter \(\sigma\) and globally minimizing the resulting SoS-convex polynomial reduce to solving semidefinite programs in polynomial time. 
The following lemma from \cite{helton2010semidefinite} connects nonnegative SoS-convex polynomials to SoS polynomials, and therefore ensures that a sufficiently regularized, nonnegative, locally strongly convex polynomial is SoS.

\begin{lemma} ({SoS Convex + Non-negativity $\Rightarrow$ SoS}, Lemma 8 in \cite{helton2010semidefinite}) {Assume  the global minimizer $s^*$ of $h(s)$ exists and $\nabla h(s^*) = 0$. Let $q(s): = h(s) -h(s^*)$. Then, clearly, $q(s): = h(s) -\nabla h(s^*)$ is a non-negative polynomial with $q(s^*) = 0$ and $\nabla q(s^*) = 0$. Moreover, if $q(s)$ is SoS convex (i.e., $\nabla^2 h(s)$ is an SoS-matrix), then $q(s)$ is SoS. }
    \label{lemma SoS convex}
\end{lemma}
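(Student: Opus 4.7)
The plan is to combine the vanishing of $q$ and $\nabla q$ at $s^*$ with the second-order Taylor expansion with integral remainder, and then exploit the SoS-matrix factorization of $\nabla^2 q$ to write the resulting integral as a \emph{polynomial} sum of squares. Explicitly, since $q(s^*)=0$ and $\nabla q(s^*)=0$,
\begin{equation*}
q(s) \;=\; \int_0^1 (1-t)\,(s-s^*)^T \nabla^2 q\bigl(s^* + t(s-s^*)\bigr)(s-s^*)\, dt.
\end{equation*}
By SoS-convexity (\Cref{def SoS convex}), there is a polynomial matrix $V(s)$ with $\nabla^2 q(s) = V(s)^T V(s)$, so the integrand equals $(1-t)\,\bigl\|V(s^*+t(s-s^*))(s-s^*)\bigr\|^2$.

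Next I would expand the inner vector polynomial in powers of $t$. Since $V(s^*+t(s-s^*))(s-s^*)$ is polynomial in both $t$ and $s$, write
\begin{equation*}
V\bigl(s^* + t(s-s^*)\bigr)(s-s^*) \;=\; \sum_{k=0}^{d-1} t^k\, w_k(s),
\end{equation*}
where each $w_k(s)$ is a vector of polynomials in $s$ and $d$ bounds the relevant degrees. Substituting and integrating termwise,
\begin{equation*}
q(s) \;=\; \sum_{j,k=0}^{d-1} c_{jk}\, w_j(s)^T w_k(s), \qquad c_{jk} := \int_0^1 (1-t)\, t^{j+k}\, dt = \frac{1}{(j+k+1)(j+k+2)}.
\end{equation*}

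The crucial observation is that the matrix $C = (c_{jk})$ is positive semidefinite, since for any $v \in \mathbb{R}^d$,
\begin{equation*}
v^T C v \;=\; \int_0^1 (1-t)\,\Bigl(\sum_{k=0}^{d-1} v_k t^k\Bigr)^2 dt \;\geq\; 0.
\end{equation*}
Factoring $C = L^T L$ with $L \in \mathbb{R}^{d \times d}$ and setting $u_i(s) := \sum_{j} L_{ij}\, w_j(s)$, the double sum collapses to
\begin{equation*}
q(s) \;=\; \sum_{i=0}^{d-1} u_i(s)^T u_i(s) \;=\; \sum_{i,\ell} \bigl(u_{i,\ell}(s)\bigr)^2,
\end{equation*}
which is the desired SoS representation.

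The main obstacle is purely organizational rather than conceptual: one must verify that the termwise integration is legitimate (it is, since the integrand is polynomial in $t$ of bounded degree) and that $V(s^*+t(s-s^*))(s-s^*)$ really admits a clean separation of $t$- and $s$-dependence so that each $w_k(s)$ is a polynomial in $s$ alone. Everything else reduces to the Gram-matrix identity showing $C \succeq 0$, which is the heart of the argument.
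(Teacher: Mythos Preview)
Your argument is correct and is essentially the standard Helton--Nie proof. The paper does not give its own proof of this lemma (it simply cites \cite{helton2010semidefinite}); the only related content is the remark after \Cref{remark q(v) expression}, which records the equivalent double-integral identity $q(s^*+v)=v^T\bigl[\int_0^1\int_0^t\nabla^2 q(s^*+rv)\,dr\,dt\bigr]v$, matching your $\int_0^1(1-t)\,\cdot\,dt$ after Fubini, but without carrying out the Gram-matrix step you supply.
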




\subsection{Matrix Inequalities and Norms}
\label{appendix tensor 2 norm and F norm}

\begin{eqnarray}
\|H\|:&=&  \max_{s \in \R^n, \|s\|=1} |H[s]^2| = \max_{i} \{|\lambda_i|\}, \qquad \lambda_*  := \max\{-\lambda_{\min}[H], 0 \}\label{def of norm 1}
\\
\|T\| :&=&\max_{s \in \R^n, \|s\|=1} |T[s]^3|, \qquad \|T\|_F :=  \sum_{i,j,k} T_{ijk}^2 , \qquad \|g\| = (\sum_{i=1}^n g_i^2 )^{1/2}.   \label{def of norm 2}
\end{eqnarray}
Clearly, $\|T\|_F \ge \|T\|$. To see this, for a symmetric third-order tensor \(T \in \mathbb{R}^{n\times n\times n}\), 
define its cubic (spectral) norm as
\[
\|T\| := \max_{\|s\|=1} |T[s]^3|
= \max_{\|s\|=1} |\langle T,\, s\otimes s\otimes s \rangle|.
\]
By the Cauchy--Schwarz inequality for tensor inner products, we have
\[
|T[s]^3| 
= |\langle T,\, s\otimes s\otimes s \rangle|
\le \|T\|_F \,\|s\otimes s\otimes s\|_F
= \|T\|_F \,\|s\|^3 
= \|T\|_F.
\]
Hence,
\[
\boxed{\|T\| \le \|T\|_F.}
\]
Moreover, using the mode-1 unfolding \(T_{(1)} \in \mathbb{R}^{n\times n^2}\), one obtains
\[
T[s]^3 = s^T T_{(1)} (s\otimes s)
\quad\Rightarrow\quad
|T[s]^3| \le \|T_{(1)}\|_2,
\]
and therefore
\[
\|T\| \le 
\min\{\|T_{(1)}\|_2,\, \|T_{(2)}\|_2,\, \|T_{(3)}\|_2\}
\le \|T\|_F.
\]
Equality \(\|T\| = \|T\|_F\) holds if and only if 
\(T\) is (up to a sign) a rank-one symmetric tensor of the form 
\(T = \alpha\, u\otimes u\otimes u\) with \(\|u\|=1\).  
A simple lower bound follows from the diagonal entries:
\[
\max_{1\le i\le n} |T_{iii}| \le \|T\|.
\]
Thus, \(\|T\|\) is always upper bounded by the Frobenius norm,
attains equality for rank-one symmetric tensors, 
and provides a tighter measure of the dominant cubic interaction captured by \(T\).

\section{Alternative Representations and Interpretations}

\subsection{Integral Representation of the Model}
\label{appendix integral representation}

In this section, we present an alternative representation of the shifted polynomial
\[
q(v)=m_3(s^*+v)-m_3(s^*)
\]
based on results from \cite{helton2010semidefinite}, and show its equivalence 
to the expansion used in the main text.

It is shown in Lemma 8 of \cite{helton2010semidefinite} that if $s^*$ satisfies 
$\nabla m_3(s^*)=0$, then
\begin{equation}
\label{eq double integral}
m_3(s^*+v) - m_3(s^*)
= v^T \left[\int_0^1 \int_0^t \nabla^2 m_3(s^* + rv)\,dr\,dt \right] v.
\end{equation}
Define
\[
\mathcal{I}(s^*, v)
:=
\int_0^1 \int_0^t \nabla^2 m_3(s^* + rv)\,dr\,dt.
\]
Then $s^*$ is a global minimizer if and only if
\[
\nabla m_3(s^*)=0
\quad\text{and}\quad
\mathcal{I}(s^*, v)\succeq 0
\quad\text{for all } v\in\mathbb{R}^n.
\]

We now show that this representation coincides with the expansion used in 
\Cref{remark q(v) expression}. Using
\[
\nabla^2 m_3(s) = H + T[s] + \sigma\big(\|s\|^2 I_n + 2ss^T\big),
\]
we compute
\begin{align}
\mathcal{I}(s^*, v)
&= \int_0^1 \int_0^t \Big[
H + T[s^*+rv]
+ \sigma\big(\|s^* + rv\|^2 I_n + 2 (s^* + rv)(s^* + rv)^T\big)
\Big] dr dt \notag
\\
&= \frac{1}{2}\nabla^2 m_3(s^*)
+ \frac{1}{6} T[v]
+ \frac{\sigma}{6}\Big(2 {s^*}^T v I_n + 4 s^* v^T \Big)
+ \frac{\sigma}{12} \Big(\|v\|^2 I_n + 2 vv^T\Big).
\label{other Q form}
\end{align}

Multiplying by $v^T$ and $v$, we obtain
\[
\mathcal{I}(s^*, v)[v]^2
=
\frac{1}{2}\nabla^2 m_3(s^*)[v]^2
+ \frac{1}{6} T[v]^3
+ \sigma {s^*}^T v \|v\|^2
+ \frac{\sigma}{4} \|v\|^4,
\]
which matches the expression in \eqref{expression m3 using m_*}.

\section{Additional Examples and Extensions}
\subsection{Proof for the univariate case}
\label{appendix proof for  univariate case}

\begin{proof}[Proof for \Cref{opt cond n=1}]
Let $q(v):=m_3(s^*+v) - m_3(s^*)$, using \eqref{expression m3 using m_*}, we have
\begin{eqnarray*}
2 q(v)&=& 2 \underbrace{\nabla m_3(s^*)^T}_{=0}  v+ 
\frac{1}{2} v^T \bigg[ \nabla^2 m_3(s^*)[v]^2 + \frac{1}{3} T[v]^3 + 2\sigma {s^*}^T  v  + \frac{\sigma}{2} \|v\|^2 \bigg]v
\\&\underset{\text{use }n=1}{=}& \frac{1}{2} v^T \bigg[ \bigg( H + \frac{1}{3}T[s^*] + \sigma \|s^*\|^2 \bigg) + 2 \sigma {s^*}^2  + \frac{2}{3}T[s^*]    + \frac{1}{3} T[v] + 2\sigma {s^*}  v  + \frac{\sigma}{2} v^2 \bigg]v
\\&=& \frac{1}{2} v^T \bigg[ H + \frac{1}{3}T[s^*] + \sigma \|s^*\|^2 - \frac{T^2}{18  \sigma}  \bigg]v + \frac{\sigma}{4} \bigg(v+2s^* - \frac{T}{3 \sigma} \bigg)^2 v^2.
\end{eqnarray*}

`$\Rightarrow$'  
Clearly, if \eqref{opt cond n=1} is satisfied, $m_3(s^*+v) - m_3(s^*) \ge 0$ for all $v$. Therefore, $s^*$ is the global minimum. 

`$\Leftarrow$' If $s^*$ is a global minimum, then it is a stationary point, therefore $\nabla m_3(s^*) =0$. 
Since $s^*$ is the global minimum $m_3(s^*+v) - m_3(s^*)  \ge 0$ for all $v \in \R^n$. By setting $v = -(2s^* + \frac{1}{3 \sigma} T)$, we derive that $ H + \frac{1}{3}T[s^*] + \sigma \|s^*\|^2  \ge \frac{T^2}{18  \sigma}$.
\end{proof}

\begin{proof}[Proof for \Cref{thm: SoS in n=1}]
Since $s^*$ is the global minimum, we have $\nabla m_3(s^*) =0$, according to \eqref{expression m3 using m_*}, we can write 
$$
 m_3(s^*+v)-m_3(s^*)  = \frac{1}{2}\begin{bmatrix}
  1 \\ v \\ v^2 
 \end{bmatrix} \underbrace{\begin{bmatrix} 0 & 0 & 0 \\
0 &  \nabla^2 m_3(s^*) & \sigma s^* + \frac{T}{6}\\ 0 & \sigma s^*  + \frac{T}{6} & \frac{\sigma}{2}
 \end{bmatrix}}_{Q_a}\begin{bmatrix}
  1 \\ v \\ v^2 
 \end{bmatrix}.
$$
Since $s^*$ is the global minimizer, by \Cref{lemma opt cond n=1}, \eqref{opt cond n=1} holds and
$$
\nabla^2 m_3(s^*) = H + T[s^*]+ 3\sigma \|s^*\|^2 \underset{\eqref{opt cond n=1}}{\ge} 2\sigma \|s^*\|^2 + \frac{2}{3}Ts^*+ \frac{T^2}{18  \sigma}.
$$
Note that the term in $ 3\sigma \|s^*\|^2$ is only true for $n=1$ case, in $n \ge 2$, the term will be $ \sigma (\|s^*\|^2 I_n + ss^T)$. Therefore, 
The determinant of the submatrix in $Q_a$, 
\begin{eqnarray*}
  \frac{\sigma}{2}\nabla^2 m_3(s^*) - \bigg(\sigma s^* +  \frac{T}{6}\bigg)^2 \ge  \sigma^2 \|s^*\|^2  + \frac{\sigma}{3}Ts^* + \frac{T^2}{36}- \bigg(\sigma s^*+  \frac{T}{6}\bigg)^2  =  0. 
\end{eqnarray*}
 We deduce that $Q_a \succeq 0$, therefore $ m_3(s^*+v)-m_3(s^*)$ is SoS. 
\end{proof}

\subsection{Proof for Optimality Conditions for Schnabel model}
\label{appendix Proof for Schnabel}

\begin{proof}[Proof for \Cref{thm: SoS sch}]
Let \( q_{\mathcal{S}}(v) := m_{\mathcal{S}}(s+v) - m_{\mathcal{S}}(s) \).   For any vector $s, v\in \R^n$, let $q_{\mathcal{S}}(v): =  m_{\mathcal{S}}(s+v)  -  m_{\mathcal{S}}(s)$. We have 
\begin{eqnarray}
q_{\mathcal{S}}(v): = \nabla  m_{\mathcal{S}}(s)^T v +\frac{1}{2} \nabla^2  m_{\mathcal{S}}(s)[v]^2  + \sum_{j=1}^k \bigg[\sigma_j  (a_j^Tv)^2 (s^Ta_j)
+ \frac{1}{6}  (a_j^Tv)^2(b_j^Tv) + \frac{\sigma_j}{4} (a_j^Tv)^4 \bigg].
\label{universal difference sch}
\end{eqnarray}
where $
\nabla^2  m_{\mathcal{S}}(s) = H + \sum_{j=1}^k (a_ja_j^T)(b_j^Ts) + \sum_{j=1}^k 3 \sigma_j  (a_j a_j^T) (a_j^Ts)^2 .
$ 
Using \eqref{universal difference sch} and \eqref{optimality condition sch}, we have
\begin{eqnarray}
2q_{\mathcal{S}}(v) &=&
 v^T \bigg[  B_{\mathcal{S}}(s^*) + 2\sum_{i=1}^k \sigma_j \bigg( (a_ja_j^T) s^*  + \frac{b_j}{6\sigma_j} \bigg)  \bigg( (a_ja_j^T) s^*  + \frac{b_j}{6\sigma_j} \bigg)^T \bigg]v \notag
  \\ && \qquad + 2\sum_{i=1}^k  \bigg[\sigma_j  (a_j^Tv)^2 s^T(a_j a_j^T)v  + 
+ \frac{1}{2}  (a_j^Tv)^2(b_j^Tv) + \frac{\sigma_j}{4} (a_j^Tv)^4 \bigg]
\notag
\\&=&  \underbrace{B_{\mathcal{S}}(s^*)}_{\succeq 0}[v]^2 +  \sum_{j=1}^k\frac{\sigma_j}{2} \bigg[ (a_j^Tv)^2 +2(a_j^T s^*) (a_j^T v) + \frac{b_j^Tv}{3\sigma_j}  \bigg]^2.
\label{SoS sch}
\end{eqnarray}
Note that the deduction of \eqref{SoS sch} is similar to the proof of \Cref{lemma opt cond n=1}.
Since \eqref{SoS sch} is symmetric and positive semidefinite, the quadratic form $B_{\mathcal{S}}[v]^2$ is a sum of squares. Together with the remaining sum-of-squares terms, this implies that $q_{\mathcal{S}}(v)$ is itself a sum-of-squares polynomial. To prove the second statement, $q_{\mathcal{S}}(v): =  m_{\mathcal{S}}(s^*+v) -  m_{\mathcal{S}}(s^*)$ is SoS and thus nonnegative for all $v$. Therefore, $s^*$ is the global minimum.
\end{proof}

\begin{proof}[Proof for \Cref{opt for sch}]
If a global minimum exists and $s^*$ is a global minimum, then it is a stationary point; therefore $\nabla m_{\mathcal{S}}(s^*) =0$. From \eqref{SoS sch}, 
\begin{eqnarray}
2q_{\mathcal{S}}(v) 
=  B_{\mathcal{S}}(s^*)[v]^2 + \sum_{j=1}^k  \frac{\sigma_j}{2} \left[ \bigg\|a_ja_j^T v + a_ja_j^T s^*  + \frac{b_j}{6\sigma_j} \bigg\|^2- \bigg\|a_ja_j^T s^* + \frac{b_j}{6\sigma_j}\bigg\|^2\right]^2.
\label{sch opt 2}
\end{eqnarray}
Note that the deduction of \eqref{sch opt 2} is similar to  is similar to the proof of \Cref{lemma opt cond n=1}. 

\begin{enumerate} \setlength{\itemindent}{0pt}
    \item 
If $v \in \text{Im}(A)^\perp$, then $a_ja_j^T v  = 0$ for all $j = 1, \dotsc, k$. Then, we have  $ B_{\mathcal{S}}(s^*) [v]^2 \ge 0$ for $v \in \text{Im}(A)^\perp$. 

    \item 
Otherwise, for each unit vector $v = \tilde{k}\frac{a_j}{\|a_j\|} \in \text{Im}(A)$, since $a_j^Ta_{\iota}= 0$ for $\iota \neq 0$. Therefore, let $s_c: =a_ja_j^T s^*  + \frac{b_j}{6\sigma_j}$, 
\begin{eqnarray}
2q_{\mathcal{S}}(v) =  \tilde{k}^2 \|a_j\|^{-2} B_{\mathcal{S}}(s^*)[ a_j ]^2 +    \frac{\sigma_j}{2} \left[ \big\|\tilde{k} a_j + s_c \big\|^2- \big\|s_c\big\|^2\right]^2.
\label{sch opt 3}
\end{eqnarray}
If $w_j(\tilde{k}):= \tilde{k} a_j +s_c$ and $s_c$ are  orthogonal, the \eqref{sch opt 3} becomes $2q_{\mathcal{S}}(v) = \tilde{k}^2 \|a_j\|^{-2} B_{\mathcal{S}}(s^*)[ a_j ]^2 \ge 0$ and we deduce that $B_{\mathcal{S}}(s^*)[ a_j ]^2 \ge 0$. On the other hand, if $w_j(\tilde{k}):= \tilde{k} a_j +s_c$ and $s_c$ are not orthogonal, the line $s_c + w_j(\tilde{k})$ intersects the ball centred at the origin of radius $\|s_c\|$  at two points, $s_c$ and $u_{j,*}$. Set $\tilde{k}:= k^*$ such that $\|\tilde{k} a_j +s_c \|^2= \|s_c\|^2.$  Since $s^*$ is the global minimum, we have $ m_{\mathcal{S}}(s^*+v) -  m_{\mathcal{S}}(s^*)  \ge 0$ for all $v \in \R^n$, we arrive at $ B_{\mathcal{S}}(s^*) [a_j]^2 \ge 0$. 
\end{enumerate}
\end{proof}

\subsection{Global Minimization and SoS Bounds: Additional Experiments}
\label{appendix Global Minima for m perturb}

\begin{table}[!h]
\centering
\caption{\small Comparison of Global Minima Found by MoM and \(\gamma^*\) Found by SoS}
\small
\resizebox{0.6\textwidth}{!}{\begin{tabular}{|l|c|c|}
\hline
\textbf{Objective} & \textbf{Global Mins Found by MoM} & \textbf{\(\gamma^*\) Found by SoS} \\ 
\hline
$m_{\mathcal{E}}$ & - & \(1 - 3.6655 \times 10^{-10}\) \\ 
\hline
$\hat{m}_{\mathcal{E}, 1}$ & 
\(\pm [0.7071 ,  -0.7071  , -0.0000
]^T\) & \(1 -4.0000 \times 10^{-5}\) \\ 
\hline
$\hat{m}_{\mathcal{E}, 2}$  - run 1 & 
\(\pm [0.7809  , -0.1830  , -0.5979]^T\) & \(1 -6.6080 \times 10^{-6}\) \\ 
\hline
$\hat{m}_{\mathcal{E}, 2}$  - run 2 & 
\(\pm[0.2089  ,  0.5790 ,  -0.7879]^T\) & \(1  -2.1461 \times  \ 10^{-5}\) \\ 
\hline
$\hat{m}_{\mathcal{E}, 2}$  - run 3 & 

\(\pm[0.7107  , -0.0072 ,  -0.7035]^T\) & \(1 -4.3092 \times 10^{-5}\) \\ 
\hline
$\hat{m}_{\mathcal{E}, 2}$  - run 4 &  
\(\pm[0.8096 ,  -0.3135   ,-0.4962
]^T\) & \(1 -5.6846 \times 10^{-5}\) \\ 
\hline
$\hat{m}_{\mathcal{E}, 2}$  - run 5 & 
\(\pm[0.7903   ,-0.2176  , -0.5728]^T\) & \(1 -2.8288 \times 10^{-5}\) \\ 
\hline
\end{tabular}
}
\label{table:comparison}
\end{table}

\subsection{Derivative and Tensor for Sensor–Localization model}
\label{appendix: Derivative for Sensor–Localization model}

Let $S$ be the set of sensor indices, $A$ the set of anchors, and let each
sensor/anchor position lie in $\R^3$.  For $(i,j)\in S\times S$ and
$(i,j)\in S\times A$, define the residuals
\[
r_{ij}(x) \;=\; \|x_i - x_j\|^2 - d_{ij}^2,
\qquad
\tilde r_{ij}(x) \;=\; \|x_i - s_j\|^2 - d_{ij}^2,
\]
so that the objective can be written as the quartic polynomial
\[
f(x)
= \sum_{i,j\in S} r_{ij}(x)^2
  + \sum_{i\in S,\, j\in A} \tilde r_{ij}(x)^2.
\]
Each term is a composition of a squared norm and a quadratic residual.
Consequently, the derivatives can be obtained by simple applications of the
chain rule.  For example, the gradient with respect to $x_i$ is
\[
\nabla_{x_i} f(x)
= 4 \sum_{j\in S}
    r_{ij}(x)\,(x_i - x_j)
  + 4 \sum_{j\in A}
    \tilde r_{ij}(x)\,(x_i - s_j),
\]
and the Hessian consists of rank–one and rank–two updates built from the
displacement vectors $x_i - x_j$ and $x_i - s_j$.  Differentiating once more,
the third derivative $\nabla^3 f(x)$ is a structured $3|\S|\times
3|\S|\times 3|\S|$ tensor whose nonzero blocks are linear combinations of
tensors of the form
$
(x_i - x_j)\otimes(x_i - x_j)\otimes(x_i - x_j),
$ and $
(x_i - s_j)\otimes(x_i - s_j)\otimes(x_i - s_j),
$
together with lower–rank symmetric permutations.  Thus, although the problem
is highly nonconvex, its third–order tensor has a highly
structured form (and potentially low rank depending on the location of the anchor). Also, the third–order tensor can be applied to directions without ever forming the full
dense tensor explicitly.  This makes the model particularly suitable as a
testbed for third–order methods.

\subsection{Adaptive Third Order Regularization Method with SoS Taylor Models}
\label{appendix global arp}

All experiments were performed on an AMD Ryzen 5 7600 6-Core Processor (3.80 GHz) with 16 GB of RAM. 
We test three methods: \texttt{AR3 + SoS}, \texttt{fminunc}, \texttt{ARC} (Adaptive Regularization with Cubics \cite{cartis2011adaptive}), and \texttt{AR3 + ARC} ( Third-order method with \texttt{ARC} as the inner solver \cite{cartis2020concise}).  

Unless otherwise stated, the parameters for all methods were set to 
$\rho_1 = 0.1$, $\rho_2 = 0.9$, $\eta = 0.1$ and $\gamma_1 = 0.5$, and $\gamma_2 = 2$, $\delta = 0.1$, $\sigma_0 = 100$. 
The inner solvers used for each method were \texttt{mcm} from \cite[Algorithm 6.1]{cartis2007adaptive} for \texttt{ARC},  
the \texttt{ARC} algorithm itself for \texttt{AR3 + ARC},  
and the \texttt{MSDP/MSOL} moment-SDP solver  from GloptiPoly~3 \cite{henrion2009gloptipoly} with \texttt{SEDUMI} for  \texttt{AR3 + SoS}.  
The outer-loop termination tolerance was set to $\|\nabla f\| < 10^{-3}$, and the maximum iteration count at 3000.
If an inner iteration was used, its tolerance was fixed at $10^{-6}$ and the maximum inner iteration count, at 3000. 

\begin{algorithm}[h]
\caption{Adaptive Third Order Regularization Method with SoS Taylor Models (AR3+SoS)}  
\textit{Initialization}: 
Choose $x_0 \in \R^n$, Set $\sigma_0, \sigma_{\min}>0$,  $\gamma_2 > 1 > \gamma_1 >0$ , $\rho_2 > \rho_1  >0$.  

\textit{Input}: $x_0$, $f_0 := f(x_0), g :=\nabla_x f(x_0), H :=\nabla_x^2 f(x_0), T := \nabla_x^3 f(x_0)$, $\sigma_k := \sigma_0$.  Tolerance $\epsilon_g$ s.t $0 < \epsilon_g \ll 1$, max iteration $k_{\max}$,  $k:=0$.

\textbf{Step 1: Test for termination.} If $\|\nabla_x f(x_k)\| \le \epsilon_g$ or $ k \ge k_{\max} $, terminate and output  $s_* := s_k$.

\textbf{Step 2: Step computation.}
Compute $s_k = \argmin_{s \in \R^n} m_3(s)$ by moment SDP solver where $m_3$ defined in \eqref{m3}  with coefficients,  $f_0 = f(x_k),$ $  g =\nabla_x f(x_k),$ $ H =\nabla_x^2 f(x_k),$ $ T= \nabla_x^3 f(x_k), $ $\sigma =  \sigma_k$. 

\eIf {Moment SDP solved successfully (\texttt{status==1})}
{Extract one global optimal solution, $s_k$. }
{Increase regularization: $\sigma_k$ by $\sigma_k:= \gamma_1 \sigma_k$ and repeat Step 2. }

\textbf{Step 3: Acceptance of trial point.} Compute $T_{3,x_k}(s_k)$ and define
$
\rho_k = \frac{f(x_k) - f(x_k+s_k)}{f(x_k) - T_{3,x_k}(s_k)}.
$

\textbf{Step 4: Regularization parameter update.} 

\If{$\rho_k > \rho_1$}{
    \emph{Successful Iter.} $x_{k+1} := x_k + s_k$;
}
\ElseIf{$\rho_k > \rho_2$}{
    \emph{Very Successful Iter.}
    Decrease regularization: $x_{k+1} := x_k + s_k$;
    $\sigma_{k+1}:= \max\{ \gamma_2 \sigma_k,\ \sigma_{\min} \}$;
}
\ElseIf{$\rho_k \le \rho_1$}{
    \emph{Unsuccessful Iter.} $x_{k+1} := x_k$;
    Increase regularization:
    $\sigma_{k+1}:= \gamma_2 \sigma_k$;
}

Repeat with $k:=k+1$. 

\label{practical AR3 + SoS algo}
\end{algorithm}

\scriptsize{
\bibliographystyle{plain}
\bibliography{sample.bib}
}

\end{document}